\newtheorem{theorem}{Theorem}
\newtheorem*{theorem*}{Theorem}
\newtheorem*{proposition*}{Proposition}
\newtheorem{lemma}[theorem]{Lemma}
\newtheorem{definition}[theorem]{Definition}
\newtheorem{proposition}[theorem]{Proposition}
\newtheorem{remark}[theorem]{Remark}
\renewcommand{\S}{\mathbb{S}}
\newcommand{\R}{\mathbb{R}}
\newcommand{\eps}{\varepsilon}
\newcommand{\N}{\mathbb{N}}
\newcommand{\s}{\hspace{0.5pt}}
\newcommand{\supp}{\mathop{\rm supp}}
\newcommand{\p}{\partial}
\newcommand{\norm}[1]{\left\Vert #1 \right\Vert}
\newcommand{\abs}[1]{\lvert #1 \rvert}
\newcommand{\ap}[1]{a^{(#1)}}
\newcommand{\app}[2]{#1^{(#2)}}
\renewcommand{\Im}{\operatorname{Im}}
\newcommand{\C}{\mathbb{C}}
\numberwithin{equation}{section}
\numberwithin{theorem}{section}
\begin{document}
\title{Gaussian beam interactions and inverse source problems for nonlinear wave equations}

\author[1]{Matti Lassas}

\author[1,2]{Tony Liimatainen}

\author[3]{Valter Pohjola}

\author[3]{Teemu Tyni}

\affil[1]{Department of Mathematics and Statistics, University of Helsinki, Finland}
\affil[2]{Department of Mathematics and Statistics, University of Jyväskylä, Finland}
\affil[3]{Research Unit of Applied and Computational Mathematics, University of Oulu, Finland}

\date{} 

\maketitle

\begin{abstract}
We study the inverse source problem for the semilinear wave equation
\[
(\Box_g + q_1)u + q_2 u^2 = F,
\]
on a globally hyperbolic Lorentzian manifold. We demonstrate that the coefficients $q_1$ and $q_2$, as well as the source term $F$, can be recovered up to a natural gauge symmetry inherent in the problem from local measurements. Furthermore, if $q_1$ is known, we establish the \emph{unique} recovery of the source $F$, which is in a striking contrast to inverse source problems for linear equations where unique recovery is not possible. Our results also generalize previous works by eliminating the assumption that $u= 0$ is a solution, and by accommodating quadratic nonlinearities.

A key contribution is the development of a calculus for nonlinear interactions of Gaussian beams. This framework provides an explicit representation for waves that correspond to sources involving products of two or more Gaussian beams. We anticipate this calculus will serve as a versatile tool in related problems, offering a concrete alternative to Fourier integral operator methods.
	 
		\medskip
		
		\noindent{\bf Keywords.} Inverse problems, inverse source problems, gauge invariance, semilinear elliptic equations, higher order linearization.
		
	 	\noindent{\bf Mathematics Subject Classification (2010)}: 
        35L71, 
        58J45, 
        35L05, 
        35R30 
        
	\end{abstract}

\tableofcontents

\section{Introduction} \label{sec_intro}
Let $(N,g)$ be a globally hyperbolic Lorentzian manifold of dimension $n+1$. By the global hyperbolicity, $N$ is isometric to $\mathbb{R} \times M$, where $M$ is a $n$-dimensional manifold, and its metric $g$ takes the form
\[
g(x,t)= -\beta(x,t)dt^2 + h_t(x),
\]
where $\beta>0$ is a smooth function on $N$ and $h_t$ is a smooth one-parameter family of Riemannian metrics on $M$, see~\cite{BS05time-splitting}. We consider an inverse problem for the nonlinear wave equation
\begin{equation}\label{eq_wave_equation}
\begin{cases}
\square_g u + q_1 u + q_2 u^2 =  F  +  f & \text{ in } [0,T]\times M,\\
u(0,x') = \phi_1(x'), \quad \p_t  u(0,x')= \phi_2(x')  &\text{ for } x'\in M,
\end{cases}
\end{equation}
where 
$\phi_1$ and $\phi_2$ are smooth functions on $M$, 
and $q_1$, $q_2$ and $F$ are smooth functions on $N$. The function $f$ is a locally supported source function, which will be controlled in the inverse problem. 
In local  coordinates, we write $x=(x_0,x')=(x^0,x^1,\ldots,x^n)$, $x^0=t$, and on $\R\times M$ the Lorentzian wave operator $\square_g$ of $g$ can be written as
\begin{equation}\label{eq:metric_local}
\square_g u = -\sum_{a,b=0}^n\frac{1}{\sqrt{|\det(g)|}}\frac{\p}{\p x^a}\left(
\sqrt{|\det(g)|}g^{ab} \frac{\partial u}{\partial x^b}
\right)=-g^{ab}\p_{ab}u +\Gamma^a\p_au.
\end{equation}
Here we denote $(g^{-1})_{ab}= (g^{ab})$ as usual.

Let $0<T_1<T_2<T$ and let us consider $U = [T_1,T_2] \times U' \subset [0,T]\times M$, where $U' \subset M$ is a smooth open relatively compact subset.
We assume that 
$$
D(U)\subset [0,T]\times\Omega,
$$
where $D(U)$ is the causal diamond of the set $U$, i.e. the intersection of the causal past and future of $U$ (see \eqref{eq_def_D}). We postpone the definitions of Lorentzian geometry to Section~\ref{sec: prelim}.

	Assume for now that the problem \eqref{eq_wave_equation} is well-posed in the energy space $E^{s+1}$ (see Appendix~\ref{sec_wellposed} for definitions) for sufficiently small sources $f$ and fixed Cauchy data $(\phi_1,\phi_2)$. Under this assumption, we define the \emph{source-to-solution map} $S$ as the mapping
\begin{equation}\label{eq:DNmap}
S : f \mapsto u_f|_U, \quad S: \mathcal{C} \subset E_c^s(U) \to E^{s+1}(U),
\end{equation}
where $u_f$ denotes the unique solution to \eqref{eq_wave_equation} corresponding to the source $f$. The well-posedness is guaranteed, when there exists a solution to the homogeneous problem ($f=0$), which is true, in particular, for sufficiently small $F$ and Cauchy data $(\phi_1,\phi_2)$ (see Appendix \ref{sec_wellposed} for details).

	Consider now two sets of coefficients $(q_1,q_2,F)$ and $(\tilde{q}_1,\tilde{q}_2,\tilde{F})$ with fixed Cauchy data $(\phi_1,\phi_2)$. Let $S$ and $\widetilde{S}$ denote the corresponding source-to-solution maps defined on neighborhoods $\mathcal{C}$ and $\widetilde{\mathcal{C}}$ of zero in $E^s(U)$, respectively. We say that $S = \widetilde{S}$ if the equality $S(f) = \widetilde{S}(f)$ holds for all $f$ in the non-empty open set $\mathcal{C} \cap \widetilde{\mathcal{C}}$. 
	The inverse source problem we address is the following:

	\begin{itemize}
		\item   \textbf{Inverse source problem:} Does $S= \widetilde S$ imply that $(q_1,q_2,F)=(\tilde q_1,\tilde q_2,\tilde F)$? 
	\end{itemize}
	
A natural limitation for the answer is due to the finite speed of propagation of waves, but the problem also has an inherent gauge symmetry. To illustrate this symmetry, consider first the linear case $q_2=0$ and denote $q_1=q$. Let $u$ be a solution to the equation \eqref{eq_wave_equation} in this case, and $\varphi$ an arbitrary $C^2$-function satisfying 
		\begin{equation}\label{eq:varphi_conditions}
		\varphi=0 \text{ in } U \quad  \text{ and } \quad \varphi(0,x) = \p_t  \varphi(0,x)=0 \text{ for } x\in \Omega. 
		 \end{equation}
		Then a short computation shows that $\tilde u:=u+\varphi$ solves  
		\begin{align}\label{eq_r2}
			\begin{split}
				\square_g \tilde u+ q \tilde u =\, &F+\square_g \varphi +q\varphi+ f.
			\end{split}
		\end{align}
		This shows that while $u$ solves the equation $\square_g u + q u = F+f$,  $\tilde{u}$ solves the same  equation with source term $F + \square_g\varphi + q\varphi+f $. Since $\varphi$ vanishes on $U$ and has zero Cauchy data, it follows that both sources $F$ and $F + \square_g\varphi + q\varphi$ yield identical source-to-solution maps. This demonstrates an obstruction to unique source determination in the linear case.
		
		In the nonlinear case, the source-to-solution map exhibits a more involved gauge invariance. Two sets of coefficients $(q_1,q_2,F)$ and $(\tilde{q}_1,\tilde{q}_2,\tilde{F})$ will produce identical source-to-solution maps if they are related through:
		\begin{align}\label{gauge_intro}
		\begin{cases}
			F+\square_g \varphi+ \tilde q_1 \varphi+\tilde q_2 \varphi^2= \tilde F & \text{ in } D(U), \\
			q_1= \tilde q_1  + 2 \tilde q_2 \varphi & \text{ in } D(U),\\
			q_2=\tilde q_2& \text{ in } D(U),
		\end{cases}
	\end{align}
	where $\varphi$ is any $C^2$ function vanishing on the measurement set $U$ and having zero Cauchy data. An elementary proof of this fact is included in the Appendix \ref{sec:gauge_symmetry_appendix}.

	The nonlinearity of the equation is crucial for our methods. For this reason we assume from now on that
\begin{equation} \label{eq_sign_cond}
\begin{aligned}
D(U) \subset \supp(q_2).
\end{aligned}
\end{equation}
	
	Our main result shows that the gauge conditions \eqref{gauge_intro} are the only obstructions to uniqueness in the inverse problem:
\begin{theorem} \label{thm_thm1}
Let  $q_1,q_2 , \tilde q_1, \tilde q_2 \in C^\infty (N)$, with $q_2$ satisfying \eqref{eq_sign_cond} and $F,\tilde F \in E^s(N)$.
Assume that the corresponding source-to-solution maps $S$ and $\tilde S$ are well-defined for $f\in E^s(U)$ small enough.
Then 
\[
 S=\widetilde S
\]
implies that \eqref{gauge_intro} hold for $\varphi := \tilde u_0 - u_0$,  where $u_0$ and $\tilde u_0$ 
solve \eqref{eq_wave_equation} with $f=0$.
\end{theorem}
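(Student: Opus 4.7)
The plan is to combine higher-order linearization with the Gaussian beam interaction calculus developed in the paper. First, set $\varphi := \tilde u_0 - u_0$. Since $S(0) = \widetilde S(0)$ one has $u_0|_U = \tilde u_0|_U$, so $\varphi$ vanishes on $U$ and has zero Cauchy data. Substituting $\tilde u_0 = u_0 + \varphi$ into the equation satisfied by $\tilde u_0$ and subtracting the equation for $u_0$ shows that the first of the three gauge relations in \eqref{gauge_intro} is an algebraic consequence of the other two. Hence it suffices to prove $q_2 = \tilde q_2$ and $q_1 - \tilde q_1 = 2\tilde q_2 \varphi$ in $D(U)$.

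Next I introduce the two-parameter perturbation $f = \eps_1 f_1 + \eps_2 f_2$ with small $\eps_j$ and $f_j \in E^s(U)$, and let $u_\eps, \tilde u_\eps$ denote the corresponding solutions. The first derivatives $v_j := \p_{\eps_j} u_\eps|_{\eps=0}$ and $\tilde v_j := \p_{\eps_j} \tilde u_\eps|_{\eps=0}$ satisfy
\[
L v_j = f_j, \qquad \tilde L \tilde v_j = f_j, \qquad L := \square_g + q_1 + 2 q_2 u_0, \qquad \tilde L := \square_g + \tilde q_1 + 2 \tilde q_2 \tilde u_0,
\]
with zero Cauchy data, while the mixed second derivative $W := \p^2_{\eps_1 \eps_2} u_\eps|_{\eps=0}$ solves $L W = -2 q_2 v_1 v_2$ with zero Cauchy data, and analogously $\widetilde W$. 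Differentiating $S = \widetilde S$ at $\eps=0$ yields $v_j|_U = \tilde v_j|_U$ and $W|_U = \widetilde W|_U$.

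To recover $q_2$, I fix an arbitrary $y_0 \in D(U)$ and choose three null geodesics $\gamma_1, \gamma_2, \gamma_3$ through $y_0$ in general position, with $\gamma_1, \gamma_2$ meeting $U$ in the causal past of $y_0$ and $\gamma_3$ meeting $U$ in the causal future. The interaction calculus supplies sources $f_1, f_2 \in E^s(U)$ whose solutions $v_1, v_2$ are Gaussian beams for $L$ along $\gamma_1, \gamma_2$, and likewise $\tilde v_1, \tilde v_2$ for $\tilde L$; since $L$ and $\tilde L$ share the principal symbol $\square_g$, their leading beam profiles coincide. An adjoint Gaussian beam $v_3$ for $L^*$ along $\gamma_3$ is prepared similarly. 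Testing $L W = -2 q_2 v_1 v_2$ against $v_3$ by integration by parts over $D(U)$, then subtracting the analogous tilde identity and using $v_j|_U = \tilde v_j|_U$ and $W|_U = \widetilde W|_U$ to cancel boundary contributions, one obtains
\[
\int_{D(U)} q_2\, v_1 v_2 v_3 \, dV_g = \int_{D(U)} \tilde q_2\, \tilde v_1 \tilde v_2 \tilde v_3 \, dV_g + o(1)
\]
in the Gaussian beam limit. The triple-interaction part of the calculus evaluates both sides as a common nonzero universal factor times $q_2(y_0)$ and $\tilde q_2(y_0)$, forcing $q_2(y_0) = \tilde q_2(y_0)$; since $y_0$ is arbitrary, $q_2 = \tilde q_2$ throughout $D(U)$.

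For the remaining identity, with $q_2 = \tilde q_2$ established, set $r_j := v_j - \tilde v_j$ and compute $L r_j = (\tilde L - L)\tilde v_j = -a\, \tilde v_j$ with $a := q_1 - \tilde q_1 - 2 \tilde q_2 \varphi$; moreover $r_j$ has zero Cauchy data and $r_j|_U = 0$. Testing against an adjoint Gaussian beam $w$ for $L^*$ and integrating by parts gives $\int_{D(U)} a\, \tilde v_j w \, dV_g = 0$ for all admissible pairs $(\tilde v_j, w)$. Applying the two-beam version of the interaction calculus, with $\tilde v_j$ and $w$ specialized to Gaussian beams along suitable null geodesics through an arbitrary $y_0 \in D(U)$, extracts the vanishing of $a$ throughout $D(U)$, yielding $q_1 = \tilde q_1 + 2 \tilde q_2 \varphi$. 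The principal obstacle is constructing sources in $E^s(U)$ that generate honest Gaussian beams of controlled accuracy along prescribed null geodesics, bounding the remainders uniformly so the limit passes through, and rigorously extracting the pointwise coefficient values from the multi-beam oscillatory integrals—this is precisely the content of the Gaussian beam interaction calculus advertised in the abstract.
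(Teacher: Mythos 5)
There is a genuine gap, and it sits exactly at the two places the paper warns about in Section \ref{sec:WKB_sketch_intro}: the first and second linearizations ``yield problems that are, in general, not known to be solvable.'' Your recovery of $q_2$ tests the second linearization $LW=-2q_2v_1v_2$ against one adjoint beam, producing $\int q_2\,v_1v_2v_3\,dV_g$ with phase $\Phi_1+\Phi_2+\Phi_3$. For stationary phase to localize this at $y_0$ you need $\kappa_1\xi_1+\kappa_2\xi_2+\kappa_3\xi_3=0$ with the $\xi_j=\nabla\Phi_j(y_0)$ lightlike and pairwise non-proportional; but a two-dimensional subspace of a Lorentzian tangent space contains at most two null directions, so three pairwise non-proportional lightlike covectors are always linearly independent. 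Hence the phase has no critical point at $y_0$, the integral does not produce $q_2(y_0)$ times a ``nonzero universal factor,'' and the argument collapses. This is precisely why the paper goes to the \emph{third} linearization with \emph{four} beams (Lemma \ref{lem_xis_lin_indep} arranges $\sigma^2\xi^{(0)}=\kappa_1\xi^{(1)}+\kappa_2\xi^{(2)}+\kappa_3\xi^{(3)}$), which only yields $q_2^2$, and then to the \emph{fourth} linearization with five beams (Section \ref{sec_q2_only}) to get $q_2^3$ and hence $q_2$. The same obstruction kills your $q_1$ step: $\int a\,\tilde v_j w\,dV_g=0$ with two beams has phase gradient $\xi_1+\xi_2\neq 0$ at $y_0$ (the sum of two non-parallel null vectors is timelike), so the integral is $O(h^\infty)$ regardless of $a$, and products of two such solutions are not known to be dense --- this is the open inverse problem for the linear wave equation with a time-dependent potential.

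The paper's actual route for $q_1$ is structurally different and cannot be replaced by your two-beam identity: it extracts the \emph{subleading} amplitude coefficient $a^{(0)}_{1,0}$ from the first-order-in-$h$ term $I_1$ of the same four-beam third-linearization integral (Lemma \ref{lem_a1}), exploiting the $\sigma$-asymptotics of the coefficients $c_j\sim C\sigma^{-4},C\sigma^{-6}$ to isolate one beam's contribution, and then differentiates the transport-equation formula \eqref{eq_a_integrals} along the geodesic to read off $Q=q_1+2q_2u_0$ (Proposition \ref{prop_q1}); this step also needs $q_2^2\neq 0$ as a prefactor, which is where hypothesis \eqref{eq_sign_cond} enters. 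Your reduction of the third gauge relation to the other two, and the observation that $\varphi|_U=0$ with zero Cauchy data, are correct and match Proposition \ref{prop_F}, but the two central recovery steps as written would fail.
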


When the linear term $q_1$ is known a priori, we obtain a stronger result on the unique recovery of the coefficients:
\begin{theorem}(Unique recovery, when the linear term is known)\label{thm:unique_recovery} Assume as in Theorem \ref{thm_thm1} and adopt its notation. Assume also in addition that $q_1=\tilde q_1$ on a set $B\subset D(U)$. Then $S=\tilde S$ implies%
\[
 \quad q_2 = \tilde q_2, \quad F = \tilde F \quad \text{ in } B.
\]
\end{theorem}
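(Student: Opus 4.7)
The plan is to apply Theorem~\ref{thm_thm1} to obtain the gauge relations \eqref{gauge_intro} with $\varphi = \tilde u_0 - u_0$, and then use the additional hypothesis $q_1 = \tilde q_1$ on $B$ together with the support condition \eqref{eq_sign_cond} to force $\varphi \equiv 0$ on $B$. Once $\varphi$ and $\Box_g\varphi$ both vanish on $B$, the first line of \eqref{gauge_intro} immediately reads $F = \tilde F$ on $B$.

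More concretely, the third relation in \eqref{gauge_intro} already gives $q_2 = \tilde q_2$ on $D(U)$, and in particular on $B$, which settles half of the conclusion. For the second half, I would substitute the hypothesis $q_1 = \tilde q_1$ on $B$ into the second relation $q_1 = \tilde q_1 + 2\tilde q_2 \varphi$ to obtain the pointwise identity $\tilde q_2 \varphi = 0$ on $B$. The task is then to propagate this to $\varphi = 0$ on $B$ despite possible zeros of $\tilde q_2$.

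This is where the sign condition \eqref{eq_sign_cond} enters decisively: since $D(U) \subset \supp(\tilde q_2) = \overline{\{\tilde q_2 \neq 0\}}$, the open set $\{\tilde q_2 \neq 0\}$ is dense in $D(U)$, so (for an open $B$) the intersection $B \cap \{\tilde q_2 \neq 0\}$ is dense in $B$. On this dense open subset of $B$ one may divide the relation $\tilde q_2 \varphi = 0$ by $\tilde q_2$ to conclude $\varphi = 0$, and continuity of the smooth function $\varphi$ then extends the vanishing to all of $B$. Since $B$ is open, all derivatives of $\varphi$ also vanish on $B$, in particular $\Box_g \varphi = 0$ on $B$. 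Substituting $\varphi = 0$, $\Box_g\varphi = 0$, and $\tilde q_2 \varphi^2 = 0$ into the first line of \eqref{gauge_intro} delivers $F = \tilde F$ on $B$.

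The only real subtlety is the promotion of $\tilde q_2 \varphi = 0$ to $\varphi = 0$ on all of $B$: if $\tilde q_2$ were allowed to vanish on an open piece of $B$, then $\varphi$ would be undetermined there by this argument and the gauge ambiguity could persist. The hypothesis \eqref{eq_sign_cond} is precisely what rules this out, via the density/continuity argument above, and it is the main (and really the only) nontrivial point in the proof.
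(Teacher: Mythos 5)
Your proof is correct and follows essentially the same route the paper sketches immediately after the theorem statement: apply Theorem \ref{thm_thm1}, feed $q_1=\tilde q_1$ into the second gauge relation to get $\tilde q_2\varphi=0$, and use \eqref{eq_sign_cond} to upgrade this to $\varphi\equiv 0$ on $B$, after which the first and third relations give $F=\tilde F$ and $q_2=\tilde q_2$. You actually supply more detail than the paper does, in particular the density-plus-continuity step and the (implicit) need for $B$ to be open so that $\Box_g\varphi$ also vanishes there.
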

The above follows from Theorem \ref{thm_thm1} by using $q_1=\tilde q_1$ in \eqref{gauge_intro} and the assumption \eqref{eq_sign_cond}, which together yield $\varphi\equiv 0$ in $B$.  
This contrasts sharply with the linear case ($q_2=0$), where the source-to-solution map remains invariant under the transformation $F \mapsto F + \square_g \varphi + q\varphi$ for any $\varphi \in C^2(N)$ satisfying \eqref{eq:varphi_conditions}, even with known $q$. Thus, nonlinearity provides a mechanism to break this gauge symmetry in the inverse problem.

When the source term $F$ is known a priori instead of $q_1$, we obtain another unique recovery result.

\begin{theorem}(Unique recovery when source is known)\label{thm:unique_recovery_known_source} Assume as in Theorem \ref{thm_thm1} and adopt its notation. Assume also in addition that $F=\tilde F$ in $J^-(U)$ and $q_1=\widetilde q_1$, $q_2=\widetilde q_2$ in $J^-(U)\setminus D(U)$. Then $S=\tilde S$ implies unique recovery:
		\begin{align}\label{unique of coeff}
			q_1=\tilde q_1, \quad q_2 = \tilde q_2 \quad \text{ in } D(U)
		\end{align}
	\end{theorem}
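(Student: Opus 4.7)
The strategy is to reduce the uniqueness claim \eqref{unique of coeff} to the vanishing of the gauge function $\varphi := \tilde u_0 - u_0$ throughout $D(U)$. Applying Theorem~\ref{thm_thm1} to the equality $S = \tilde S$ gives the gauge relations \eqref{gauge_intro} for this $\varphi$. The third relation yields $q_2 = \tilde q_2$ in $D(U)$, and together with the added hypothesis $q_2 = \tilde q_2$ in $J^-(U)\setminus D(U)$ it shows $q_2 = \tilde q_2$ throughout $J^-(U)$. Since the second relation reads $q_1 - \tilde q_1 = 2\tilde q_2 \varphi$ in $D(U)$, the theorem will follow once I establish $\varphi \equiv 0$ in $D(U)$.

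I would first show $\varphi = 0$ on $J^-(U)\setminus D(U)$. In this region the coefficients and source coincide for the two problems by hypothesis, so subtracting the nonlinear equations for $u_0$ and $\tilde u_0$ produces the \emph{linear} wave equation
\[
\square_g \varphi + \bigl[\tilde q_1 + \tilde q_2(\tilde u_0 + u_0)\bigr]\varphi = 0,
\]
and $\varphi$ has zero Cauchy data on $\{t=0\}$ since $u_0$ and $\tilde u_0$ share the data $(\phi_1,\phi_2)$. A short causal argument shows that $J^-(U)\setminus D(U) = J^-(U)\setminus J^+(U)$ is past-closed inside $\{t\geq 0\}$: if $q$ lies in the causal past of some $p \in J^-(U)\setminus J^+(U)$, transitivity gives $q\in J^-(U)$, and $q\in J^+(U)$ would force $p\in J^+(U)$, a contradiction. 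Standard energy estimates for the linear wave equation in this past-closed region then give $\varphi \equiv 0$ on $J^-(U)\setminus D(U)$.

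Next I would promote this to $\varphi \equiv 0$ on $D(U)$. The first gauge relation together with $F = \tilde F$ (valid because $D(U) \subset J^-(U)$) reduces in $D(U)$ to the semilinear equation
\[
\square_g \varphi + \tilde q_1 \varphi + \tilde q_2 \varphi^2 = 0.
\]
Combined with the vanishing on $J^-(U)\setminus D(U)$ already established, $\varphi$ solves this same semilinear equation on the entire past-closed region $J^-(U)\cap\{t\geq 0\}$ (trivially where $\varphi$ already vanishes) with zero Cauchy data on the spacelike slice $\{t=0\}\cap J^-(U)$. Standard uniqueness for the Cauchy problem of a semilinear wave equation --- the usual energy estimate followed by a Gronwall iteration to absorb the quadratic term $\tilde q_2 \varphi^2$ --- then forces $\varphi \equiv 0$ throughout $J^-(U)$. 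Substituting this into $q_1 - \tilde q_1 = 2\tilde q_2\varphi$ yields $q_1 = \tilde q_1$ on $D(U)$, completing the proof.

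The main delicacy I anticipate is that the past boundary of $D(U)$ is a characteristic null hypersurface, on which the ordinary Cauchy problem is replaced by a Goursat problem and cannot be closed by elementary energy means. My proposal circumvents this by using the open-set vanishing of $\varphi$ on the exterior $J^-(U)\setminus D(U)$ to reframe the uniqueness problem on $D(U)$ as a standard Cauchy problem on the larger past-closed domain $J^-(U)\cap\{t\geq 0\}$ with zero data on the spacelike slice $\{t=0\}$, where semilinear uniqueness applies directly.
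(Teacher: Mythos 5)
Your proposal is correct and follows essentially the same route as the paper's proof: first kill $\varphi=\tilde u_0-u_0$ on $J^-(U)\setminus D(U)$ by subtracting the two nonlinear equations to get a linear wave equation with zero Cauchy data and invoking finite propagation, then use the gauge relations and $F=\tilde F$ to see that $\varphi$ solves the semilinear equation on all of $J^-(U)\cap\{t\ge 0\}$ with zero Cauchy data, so uniqueness forces $\varphi\equiv 0$ and the gauge relations collapse to $q_1=\tilde q_1$, $q_2=\tilde q_2$. The only difference is cosmetic: you spell out the past-closedness of $J^-(U)\setminus D(U)$ where the paper cites a domain-of-dependence lemma.
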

\begin{proof}
Note first that
\begin{equation}\label{eq: gauge vanishes}
(\square_g  + q_1 + q_2(u_0+\widetilde u_0))(u_0-\widetilde u_0) =
\square_g u_0 + q_1 u_0 + q_2 u_0^2 - (\square_g \widetilde u_0 + \widetilde q_1 \widetilde u_0 + \widetilde q_2 \widetilde u_0^2) = 0
\end{equation}
in $J^-(U)\setminus D(U)$.
Now the unique solution to the linear wave equation
\begin{equation}\label{eq: wave eq in corollary}
\begin{cases}
\square_g v + q_1v + q_2(u_0+\widetilde u_0)v=0,&x\in M_T,\\
v(0)=\p_t v(0)=0
\end{cases}
\end{equation}
is $v=0$. By \eqref{eq: gauge vanishes}, in the set $J^-(U)\setminus D(U)$
the function $\varphi=u_0-\widetilde u_0$ also satisfies \eqref{eq: wave eq in corollary}. By the domain of dependence of linear wave equations (see, for example, \cite[Lemma~12.8]{Ringstrom2009}, the solution $v$ to the linear equation \eqref{eq: wave eq in corollary} at $p\in M_T$ depends only on the values of $v$ in the past $J^-(p)$. Hence, by the uniqueness of the linear wave equation, $v=\varphi=0$ in $J^-(U)\setminus D(U)$. 
Since $F=\widetilde F$ in $J^-(U)$, by \eqref{gauge_intro} $\varphi$ satisfies
\[
(\square_g + \widetilde q_1) \varphi + \widetilde q_2\varphi^2=0,\quad\text{in } D(U)
\]
and by the above computations this equation is satisfied also in the larger set $J^-(U)\cap\{t>0\}$.
Moreover $\varphi(0)=\p_t\varphi(0)=0$.
By uniqueness of solutions to the nonlinear wave equation $\varphi=0$ in $J^-(U)\cap\{t>0\}$. By Theorem~\ref{thm_thm1} the coefficients are thus uniquely determined in $J^-(U)$.
\end{proof}

	The quadratic nonlinearity in our problem presents greater challenges compared to the cubic or higher nonlinearities often assumed in the literature. In fact, handling quadratic nonlinearity leads us to develop an \emph{interaction calculus for Gaussian beams} described in the next section.

\subsection{Gaussian beam interactions and sketch of the proof}\label{sec:WKB_sketch_intro}
Consider two Gaussian beam (see Section \ref{sec_GBs}) solutions $v$ %
to $(\square_g + Q)v = 0$ of the form
\begin{equation}\label{eq:guassian_beam_form_intro}
 v=e^{\Phi/h}(a_0+ha_1+\cdots+h^Ka_K)+r,
\end{equation}
where $\Phi$ is the phase function and $a=a_0+ha_1+\cdots+h^Ka_K$ is the amplitude constructed with respect to a lightlike geodesic. The function $r=r_h$ is the correction term.

The main new technique we introduce in this paper is an interaction calculus for Gaussian beams, or interaction calculus in short. In this method we consider solutions $w$ to the linear wave equation
\begin{equation}\label{eq:first_level_interactions}
 (\square_g + Q) w = q v^{(1)}\app{v}{2},
\end{equation}
where are $v^{(1)}$ and $\app{v}{2}$ are Gaussian beams of the form \eqref{eq:guassian_beam_form_intro} of the form and corresponding to intersecting lightlike geodesics, and $q$ is a smooth function. The solution $w$ can then be considered to correspond to a quadratic interaction of two Gaussian beams. We show that in this case the solution $w$ has the form
\begin{equation}\label{eq:form_of_w_intro}
w =   e^{i(\Phi_1+\Phi_2)/h}  \sum_{j = 2}^{2K+2} h^jc_j +\hat r,
\end{equation}
where $\Phi_1$ and $\Phi_2$ are the phase functions of $v^{(1)}$ and
$\app{v}{2}$, and the coefficients $c_j$ are determined recursively from their
respective amplitudes. 
Here also $\hat r=\hat r_h$ is a correction term that becomes small in the parameter $h>0$,
provided that the geodesics of the Gaussian beams have a single intersection point (the general case
of several intersection points necessitates some modifications and is discussed briefly below).
We also consider more general interactions such as 
\begin{equation}\label{eq:second_level_interactions}
 (\square_g + Q) \omega = q vw,
\end{equation}
where $v$ is a Gaussian beam solution and $w$ as in \eqref{eq:first_level_interactions}. These represent iterated interactions of waves, since the source term itself involves the solution $w$ corresponding to interaction of $v^{(1)}$ and $\app{v}{2}$. The solution $\omega$ has a form similar to \eqref{eq:form_of_w_intro}.

We use the interaction calculus to prove Theorem \ref{thm_thm1}. We linearize the nonlinear equation \eqref{eq_wave_equation} up to four parameters in the data. First and second linerization yield problems that are, in general, not known to be solvable (an inverse problem for a linear wave equation and a density problem for products of three solutions). For this reason we proceed to the third linearization, which produces the equation
\begin{equation} \label{eq_wijk}
\begin{aligned}
\square_g \omega + (q_1 + 2 q_2 u_0)  \omega  = -  6q_2 vw,
\end{aligned}
\end{equation}
where $v$, as well as $\app{v}{0}$ below, are Gaussian beams, and $w$ is as above with $Q=q_1 + 2 q_2 u_0$. 
Together with the knowledge of the source-to-solution map, it follows that a sum of %
integrals of the form 
\begin{equation}\label{eq:known_integral}
 \int q_2\app{v}{0}v w.
\end{equation}
is known. The interaction calculus especially shows that the phase of $w$ is equal to the sum of $\Phi_1$ and $\Phi_2$ by \eqref{eq:form_of_w_intro}.  Thus, in \eqref{eq:known_integral} an exponential function of the sum of the phases of four Gaussian beams $\app{v}{0}$, $\app{v}{1}$, $\app{v}{2}$ and $v$ appears. The exponential function is thus an approximate a delta function and can be analyzed using stationary phase. This allows us to recover $q_2$, or more precisely, its square.

After recovering the square of $q_2$, we proceed to recover information about the linear term $q_1$. We observe that the integral \eqref{eq:known_integral} contains not only explicit information about $q_2$, but also implicit information about $q_1 + 2 q_2 u_0$ encoded in the subleading amplitude coefficients of the involved Gaussian beams. The interaction calculus further enables us to perform an asymptotic analysis in higher powers of the small parameter $h$ for integrals of the form \eqref{eq:known_integral}. By an appropriate choice of Gaussian beams, we may recover the subleading coefficient $c_3$ appearing in \eqref{eq:form_of_w_intro} through this calculus, which in turn can be used to solve for $q_1 + 2 q_2 u_0$. This is possible provided $q_2^2 \neq 0$,  as this quantity appears as a coefficient in the asymptotic analysis (see the proof of Lemma \ref{lem_a1}). We note that the approach for recovering the linear term through asymptotic
analysis of subleading amplitude coefficients in Gaussian beams parallels the
methodology in \cite{FO20} for stronger, cubic,  nonlinearities. 

To fully determine $q_2$
(rather than just its square), we employ fourth order linearization in Section 
\ref{sec_q2_only}, which involves analyzing higher order iterations of these
interactions. 
The case where the geodesics of Gaussian beams have multiple intersection points requires further analysis. To address this, we consider  decompositions such as 
$$
w = \hat w +  w_{N} + \hat r
$$
where $\hat w$ and $\hat r$ correspond to the terms in \eqref{eq:form_of_w_intro}. The additional term $w_N$ arises due to the presence of these extra intersection points and is not small in the parameter $h$. Nevertheless, we can show that the support of $w_N$ lies outside the set of interest, provided the geodesics are chosen appropriately. Section \ref{sec_geom_setup} is devoted to this analysis.

\subsubsection{Comparison to FIO calculus}

Fourier integral operator (FIO) calculus is highly effective for analyzing wave interactions and the propagation of their singularities. The WKB interaction calculus (or  interaction calculus in short) presented in this paper offers a powerful alternative in many applications. Within the FIO framework, interacting solutions are treated as conormal distributions and analyzed using intersecting Lagrangian manifolds.
In contrast, the interaction calculus developed here describes the nonlinear interactions of Gaussian beam solutions, which are smooth or have limited regularity, depending on the coefficients of the corresponding wave equation. A key advantage of the  interaction calculus is that it yields an explicit asymptotic expansion, see \eqref{eq:first_level_interactions}, for the interacting solution to any order in the asymptotic parameter. This is analogous to computing higher order symbol coefficients in FIO calculus, as demonstrated in applications to inverse problems in~\cite{chen2025stable}. Moreover, the interaction calculus is elementary to apply and naturally supports iterated interactions, as shown in Section \ref{sec_high_WKB_interaction}.

The use of Gaussian beams, rather than the plane wave type distributional solutions common in FIOs, may enable novel applications in inverse problems. Although not explicitly tracked here, the finite regularity requirements of the interaction calculus naturally extend its applicability beyond the $C^\infty$ category typically assumed in FIO techniques. Furthermore, Gaussian beams arise in distinct physical contexts compared to plane waves. For instance, they are frequently used to model laser beams. Consequently, the interaction calculus can model the propagation of interacting lasers in nonlinear media, potentially enabling studies of  new types of inverse problems.


\subsection{Motivation and related literature}
The motivation for this work is twofold. First, we demonstrate how nonlinearity can be leveraged to solve hyperbolic inverse source problems. Second, we introduce the interaction calculus for Gaussian beams detailed above, which is a novel tool for handling quadratic nonlinearities and analysing interactions of waves in general. 

Inverse problems for nonlinear hyperbolic equations have attracted considerable interest due to the remarkable property that nonlinear wave interactions can serve as a powerful tool for solving such problems. This was first observed in a striking result by Kurylev, Lassas and Uhlmann~\cite{KLU18}, where they proved that local measurements for the scalar wave equation with a quadratic nonlinearity determine the conformal class of a globally hyperbolic four-dimensional Lorentzian manifold. Their approach, now known as the \emph{higher-order linearization method}, originally appeared in the context of parabolic equations~\cite{isakov1993uniqueness} and has since become a fundamental technique in inverse problems.

Following~\cite{KLU18}, research on inverse problems for nonlinear hyperbolic equations has expanded rapidly. Subsequent works, including~\cite{chen2021detection,kurylev2022inverse,lassas2018inverse,uhlmann2020determination}, have explored inverse problems for connections, Einstein equations, and more general nonlinearities. The higher-order linearization method was later adapted to elliptic equations in~\cite{feizmohammadi2020inverse,LLLS21} for full data and in~\cite{krupchyk2020remark,lassas2020partial} for partial data. For boundary value problems in the hyperbolic setting, the method was further developed in~\cite{lassas2022uniqueness, hintz2022inverse}. 
We also mention the recent works regarding inverse problems for nonlinear hyperbolic equations \cite{wang2019inverse, de2019nonlinear, de2020nonlinear, hintz2022dirichlet, lassas2025stability, balehowsky2022inverse, lai2021reconstruction, lassas2024inverse, kian2021determination, sa2022recovery, tzou2023determining}.

Among these, the most closely related to our work are the following. The study~\cite{FO20} considered the recovery of the zeroth-order coefficient $V$ in the equation $(\square_g + V)u + u^3 = 0$ by analysing subprincipal symbols of Gaussian beams. 
Our Theorem~\ref{thm:unique_recovery} generalizes their result to quadratic nonlinearities, unknown coefficients, and the presence of a source term. 
Another closely related and very recent work is~\cite{QXT25}, where a similar source determination problem is studied
in Euclidean space using  Dirichlet-to-Neumann maps. The work \cite{liimatainen25_mean_curvature} studied the inverse source problem for the mean curvature equation. 
The work~\cite{lin2024determining} addressed the recovery of Cauchy data and the nonlinearity $a(x,u)$ in the wave equation $\square u + a(x,u) = 0$ from the Dirichlet-to-Neumann map in Minkowski space. We also mention the related work for parabolic equations \cite{lin2022simultaneous}. The recent stability study \cite{chen2025stable} in Minkowski space  revisited~\cite{FO20} by employing Fourier integral operator (FIO) calculus for intersecting Lagrangians. The authors used FIO calculus to compute subprincipal symbols for qubic interactions of distorted plane waves. In contrast, we use our interaction calculus  to expand solutions to interactions of Gaussian beams \eqref{eq:first_level_interactions}, and also to iterated interactions such as \eqref{eq:second_level_interactions},  to subleading order. 

The calculus is motivated by the calculus introduced in the elliptic setting for complex geometric optics solutions \cite{FLL23}, where the recovery of $V$ of the equation $\Delta_gu+Vu^2=0$ was considered.  The calculus there is applicable on cylindrical (CTA) geometries, which is a natural restriction. 
The interaction calculus for Gaussian beams on globally hyperbolic Lorentzian manifolds, however, is  applicable in general. Unlike the current work, the work \cite{FLL23} assumes that the corresponding geodesics only intersect once.

As established in Theorem~\ref{thm:unique_recovery}, the presence of nonlinearity can render inverse source problems uniquely solvable. This phenomenon was first observed for elliptic equations in~\cite{LL24} and later extended to parabolic settings in~\cite{kian2024determining}. (We mention that, unlike the current work, both of these earlier results relied on the solvability of the first linearized problem.) The phenomenon suggests that artificially introducing nonlinearity into linear systems could enhance parameter reconstruction, particularly for source recovery. The concept of induced nonlinearity has earlier been explored for example in medical imaging using microbubbles~\cite{lindner2004microbubbles} and nonlinear optics~\cite{deka2017nonlinear}, as well as in inverse problems employing modulation instead of nonlinearity~\cite{li2021acousto}. Much like contrast agents in X-ray imaging, induced nonlinearity can possibly offer a powerful enhancement mechanism, enabling novel applications.

\section{Preliminaries}\label{sec: prelim}

We will now introduce some definitions and results needed in the subsequent sections.
We begin by discussing Lorentzian geometry and then discuss how to reduce the problem
to zero inital data. In the next subsection
\ref{sec_GBs} review the construction of Gaussian beams.

\medskip
\noindent
The manifold $N$ we consider is globally hyperbolic, and  we use the signature $(-,+,+,+)$ for the metric $g$. 
We will moreover denote the inner product given by the metric  by $ \langle \cdot , \cdot \rangle_g$.
A vector $\xi \in T_pN$ is called timelike if $ \langle \xi , \xi \rangle_g <0$.
It is called light like if $ \langle \xi , \xi \rangle_g = 0$, and spacelike 
if $ \langle \xi, \xi \rangle_g > 0$. 
We say that a curve $\tau$ in $N$ is  causal if its
tangent vector is timelike or lightlike for all points on the curve. 
Since  $N$ is globally hyperbolic and thus time orientable, we have a globally defined smooth timelike 
vector field $V$ on $N$.
A future pointing causal curve is a curve for which $ \langle \dot \tau , V  \rangle_g < 0$.
The existence of a  future pointing causal curve from $p$ to $q$, is denoted by $p \leq q$.
The future and past sets of a set $U \subset N$ are given by
$$
J^+(U) := \cup_{p \in U}\{ q \in N \,:\, p \leq q \}, \qquad 
J^-(U) := \cup_{p \in U}\{ q \in N \,:\, q \leq p \},
$$
respectively. The causal diamond of $D(U)$ of a set $U \subset N$, is given by
\begin{equation} \label{eq_def_D}
\begin{aligned}
D(U) := J^+(U)\cap J^-(U), 
\end{aligned}
\end{equation}
we will abbreviate $D(U)$ by $D$.
The wave operator $\Box_g$ in \eqref{eq_wave_equation}, is given  in a coordinate chart by 
$$
\Box_g u = - |g|^{-1/2} \p_i\big( |g|^{1/2} g^{ij} \p_j u  \big),
$$
where we use the Einstein summation over repeated indices $i,j=0,\ldots, n$. Here also $\abs{g}$ is the determinant of the metric.  

\medskip
\noindent
Let us recall what the standard  Fermi coordinates are. These are constructed
relative to a lightlike geodesic $\gamma:I\to \R$, where $I$ is a bounded interval in
$\R$, by inverting the map 
\begin{equation}\label{Fermi_coords_def}
 (s,y)\mapsto \text{exp}_{\gamma(s)}\big(\sum_{k=1}^{n}y^k\s e_k(s)\big) \in N.
\end{equation}
Here $e_k(s)$ are the parallel transports along $\gamma$ of the last $n$ vectors of a frame $\{e_0,e_1,\ldots, e_n\}$ of $T_{\gamma(0)}N$ with 
\[
 e_0=\dot\gamma(0).
\]
The other vectors of the frame are chosen so that for $j,k=2,\ldots n$, it holds
\begin{equation}\label{light_cone_frame}
 \langle e_0 , e_0 \rangle_g = 0, 
\quad \langle e_1 ,e_1 \rangle_g = 0, 
\quad \langle e_0 , e_1  \rangle_g = -2, 
\quad \langle e_j , e_k \rangle_g = \delta_{jk}. 
\end{equation}
Fermi coordinates always exist on a neighborhood of the graph of $\gamma$.
We record the properties  of Fermi coordinates in the following  (see e.g. \cite{FO20}).

\begin{definition}[Fermi coordinates]
Given a lightlike geodesic $\gamma : (a-\delta,b+\delta) \to N$, $a<b$, $\delta >0$, a Fermi coordinate chart $(\varphi, U)$
and the corresponding coordinates $(z^0,z^1,\dots,z^n)=(s,z')$, have the following properties:
\begin{enumerate}[(i)]
\item $\varphi (U) = I \times B(0 ,\delta')$, where $I := (a+\delta', b-\delta' )$ for some small $\delta' > 0$.

\item $\varphi(\gamma(s)) = (s,0,..,0)$.

\item The metric tensor satisfies 
\begin{equation} \label{eq_fermi_g}
\begin{aligned}
g |_\gamma = 2 ds \otimes dz^1 + \sum_{i=2}^n dz^i \otimes dz^i, \qquad \p_{i} g_{jk}|_\gamma = 0,\quad i,j,k=0,\dots,n.
\end{aligned}
\end{equation}

\end{enumerate}
\end{definition}

\noindent 
We will also later employ the asymptotic notation $f(z) \sim g(z)$, as $z \to 0$, which is defined as 
$$ 
f(z) \sim g(z) \qquad \Leftrightarrow \qquad \lim_{z \to a}\frac{f(z)}{g(z)} =  1.
$$
Note that if $f(z) \sim z^a$, as $z \to 0$, then $1/f(z) \sim z^{-a}$, as $z \to 0$.

\medskip
\noindent
We now make some simplifications that will be used in the consequent sections.
The next result shows that we can reduce proving  Theorem \ref{thm_thm1} 
to the case where  $\phi_1=\phi_2=0$. We shall therefore  
assume that $\phi_1=\phi_2=0$ in the subsequent sections.
For convenience we introduce the notation
$$
M_T :=  [0,T] \times M.
$$

\begin{lemma} \label{lem_reduction}
Assume that Theorem \ref{thm_thm1} holds in the case $\phi_1 = \phi_2 = 0$. 
Then Theorem \ref{thm_thm1} holds also for all $\phi_1, \phi_2 \in C^\infty(M)$. 
\end{lemma}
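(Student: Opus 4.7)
The plan is to absorb the Cauchy data by subtracting the ``free'' solutions. Let $u_0$ and $\tilde u_0$ be the unique solutions to \eqref{eq_wave_equation} with $f=0$ for the two coefficient sets, both with Cauchy data $(\phi_1,\phi_2)$. For any admissible source $f$ I would write $u_f = u_0 + w_f$ and $\tilde u_f = \tilde u_0 + \tilde w_f$. A direct substitution shows that $w_f$ solves
\begin{equation*}
\square_g w_f + (q_1 + 2 q_2 u_0) w_f + q_2 w_f^2 = f,
\qquad w_f|_{t=0} = 0,\ \partial_t w_f|_{t=0} = 0,
\end{equation*}
and analogously for $\tilde w_f$ with coefficients $(\tilde q_1 + 2 \tilde q_2 \tilde u_0,\ \tilde q_2)$ and zero source. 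Thus $w_f,\tilde w_f$ solve semilinear wave equations of exactly the form \eqref{eq_wave_equation} with $\phi_1=\phi_2=0$, modified coefficients $Q_1:=q_1+2q_2 u_0$, $Q_2:=q_2$, $\tilde Q_1:=\tilde q_1+2\tilde q_2\tilde u_0$, $\tilde Q_2:=\tilde q_2$, and modified sources $F_{\mathrm{mod}}=\tilde F_{\mathrm{mod}}=0$. The support assumption \eqref{eq_sign_cond} persists since $Q_2=q_2$.

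Next I would verify that the source-to-solution maps of the two modified problems coincide on $\mathcal{C}\cap\tilde{\mathcal{C}}$. Setting $f=0$ in the equality $S(f)=\tilde S(f)$ forces $u_0|_U=\tilde u_0|_U$, because $w_0=\tilde w_0=0$ by uniqueness. For a general admissible $f$ the equality $(u_0+w_f)|_U = (\tilde u_0+\tilde w_f)|_U$ then reduces to $w_f|_U=\tilde w_f|_U$, which is exactly the assertion that the modified source-to-solution maps $f\mapsto w_f|_U$ and $f\mapsto \tilde w_f|_U$ agree.

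I would now apply the assumed zero-Cauchy-data case of Theorem~\ref{thm_thm1} to the modified problems. Its gauge function is $\psi:=\tilde w_0 - w_0 = 0$, so the three conditions in \eqref{gauge_intro} for the modified problem collapse to $Q_2=\tilde Q_2$ and $Q_1=\tilde Q_1$ (the $F_{\mathrm{mod}}$-identity being trivially $0=0$). Unpacking, this gives $q_2=\tilde q_2$ and $q_1-\tilde q_1 = 2 q_2(\tilde u_0 - u_0) = 2\tilde q_2\varphi$ with $\varphi:=\tilde u_0 - u_0$, which are the last two relations of \eqref{gauge_intro} for the original problem.

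The remaining identity $F+\square_g\varphi+\tilde q_1\varphi+\tilde q_2\varphi^2=\tilde F$ would then be obtained by subtracting the two defining equations $\square_g u_0+q_1u_0+q_2u_0^2=F$ and $\square_g\tilde u_0+\tilde q_1\tilde u_0+\tilde q_2\tilde u_0^2=\tilde F$, expanding $\tilde u_0^2-u_0^2 = 2u_0\varphi+\varphi^2$, and substituting $q_1=\tilde q_1+2\tilde q_2\varphi$ together with $q_2=\tilde q_2$; the algebra collapses to the desired identity. The only genuinely delicate point, rather than an obstacle, is ensuring that the modified problems inherit well-posedness of the source-to-solution map on the same open neighborhood of zero: this is routine, since $u_0\in E^{s+1}$ is fixed and smallness of $f$ translates one-to-one between $u_f$ and $w_f=u_f-u_0$ via a bounded affine shift. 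With this in hand, all hypotheses of the zero-Cauchy-data version of Theorem~\ref{thm_thm1} apply, and the reduction is complete.
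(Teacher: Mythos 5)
Your proof is correct, but it takes a genuinely different reduction from the paper's. The paper subtracts the solution $v$ of the \emph{free linear} wave equation $\square_g v=0$ with Cauchy data $(\phi_1,\phi_2)$, so the modified problem for $w=u-v$ retains a nontrivial background source (and, written consistently, the zeroth-order coefficient $q_1+2q_2v$); the gauge function of the modified problem is then still $\varphi=\tilde u_0-u_0$, and recovering the original relations \eqref{gauge_intro} requires a short algebraic untangling (the paper multiplies the middle relation of \eqref{eq_potentials_equiv} by $v$). You instead subtract the \emph{nonlinear} zero-source solution $u_0$, which makes the modified background source vanish and forces the modified gauge function $\tilde W_0-W_0$ to be identically zero; the conclusion of the zero-data theorem then collapses to the plain equalities $Q_1=\tilde Q_1$, $Q_2=\tilde Q_2$, and the $F$-relation drops out of a direct subtraction of the two background equations. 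This is cleaner. The one thing the paper's route buys that yours does not is regularity of the modified coefficients: Theorem \ref{thm_thm1} assumes $q_1\in C^\infty(N)$, and your $Q_1=q_1+2q_2u_0$ is a priori only as regular as $u_0\in E^{s+1}$ (since $F\in E^s$ need not be smooth), whereas $v$, solving a linear wave equation with smooth coefficients, zero source and smooth Cauchy data, is smooth, so $q_1+2q_2v$ stays in $C^\infty$ and only the source --- which is allowed to lie in $E^s$ --- absorbs the rough terms. You should at least flag this; in mitigation, the paper's own proof of the zero-data case works throughout with $Q=q_1+2q_2u_0$ as the coefficient of the linearized equations and of the Gaussian beam construction, so it implicitly requires the same regularity of $u_0$ that your reduction does.
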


\begin{proof} 
Let $u_0$ and $\tilde u_0$ be solutions of \eqref{eq_wave_equation}, with coefficients $q_1,q_2,F$ and $\tilde q_1, \tilde q_2, \tilde F$,
and with the Cauchy data given by $\phi_1, \phi_2 \in C^\infty(M)$ and the source $f=0$. 
Suppose $v$  furthermore solves
\begin{equation}\label{eq_wave_equation_2}
\begin{cases}
\square_g v =   0,& x\in M_T\\
v(0,x') = \phi_1(x'),\quad \p_t  v(0,x') = \phi_2(x') , &\text{ on } \quad M,
\end{cases}
\end{equation}
Setting $w := u_0 - v$, gives us a solution to
\begin{equation}\label{eq_wave_equation_3}
\begin{cases}
\square_g w + q_1 w + q_2 w^2 =  - 2q_2u_0 v - q_1 v + q_2 v^2 + F +  \hat f,& x\in M_T\\
w(0,x') = 0,\quad \p_t  w(0,x')=0, &\text{ on } \quad M.
\end{cases}
\end{equation}
We define $\tilde w:= \tilde u_0 - v$ correspondingly.  
We know the source-to-solutions maps $S : \hat f \mapsto w$ and $\tilde S: \hat f \mapsto \tilde w$ from \eqref{eq:DNmap}, since we 
know $w$ and $\tilde w$ in $U$. 
We moreover know that $S = \tilde S$.
Because we assume that Theorem \ref{thm_thm1} holds for $\phi_1 = \phi_2 = 0$, we have that
\begin{align} \label{eq_potentials_equiv}
\begin{cases}
q_2 = \tilde q_2, \\
q_1 =  \tilde q_1 + 2 \tilde q_2 \varphi, \\
F - 2q_2 u_0 v - q_1 v + q_2 v^2  = \tilde F - 2q_2\tilde u_0 v - \tilde q_1 v + q_2 v^2 -
W(\varphi), 
\end{cases}
\end{align}
where $W(\varphi) := \Box \varphi- \tilde q_1 \varphi - \tilde q_2 \varphi^2$ 
and $\varphi :=  \tilde w_0 - w_0 =  \tilde u_0 - u_0$.
Multiplying the middle equation of \eqref{eq_potentials_equiv} by $v$ and using $\varphi=u_0-\widetilde u_0$, we obtain
\begin{align*}
2q_2 u_0 v - 2q_2\tilde u_0 v + (q_1  - \tilde q_1) v  = 0.
\end{align*}
Therefore
$
F  = \tilde F + \Box \varphi- \tilde q_1 \varphi - \tilde q_2 \varphi^2. 
$
This together with \eqref{eq_potentials_equiv} shows that Theorem \ref{thm_thm1} holds.
\end{proof}

\noindent
One can  show that $q_1$, $q_2$ and $F$ are known in the measurement 
set $U$, provided that we know $S$.

\begin{lemma}\label{lem_coeff_determined_in_U} 
The map $S$ determines $q_1$, $q_2$ and $F$ in the measurement set $U$.
\end{lemma}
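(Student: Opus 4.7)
The plan is to linearize the source-to-solution map around $f = 0$ and read off the three unknowns from the zeroth, first, and second linearizations, all inside $U$, where both the source $f$ and the solution $u_f$ are directly accessible through $S$.

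At zeroth order, $u_0 := S(0)$ is known on $U$, and hence $\square_g u_0$ is known on $U$ by differentiation. For the first linearization, I would take $f = \epsilon f_1$ with $f_1 \in C_c^\infty(U)$ admissible, and set
\[
v_1 := \left. \frac{\p}{\p \epsilon} u_{\epsilon f_1} \right|_{\epsilon = 0}.
\]
By the smoothness of the source-to-solution map near $0$ for this semilinear equation, $v_1$ is well-defined, solves $(\square_g + q_1 + 2 q_2 u_0) v_1 = f_1$ with zero Cauchy data, and $v_1|_U$ is determined by $S$. Rearranging on $U$ gives $(q_1 + 2 q_2 u_0) v_1 = f_1 - \square_g v_1$ with computable right-hand side, so the coefficient $q_1 + 2 q_2 u_0$ is recovered on $U$ wherever $v_1$ does not vanish; choosing $f_1$ to be a small bump placed slightly to the causal past of a prescribed point $p \in U$ arranges $v_1(p) \neq 0$.

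For $q_2$, I would use a two-parameter family $f = \epsilon_1 f_1 + \epsilon_2 f_2$ and form
\[
w := \left. \frac{\p^2}{\p \epsilon_1 \, \p \epsilon_2} u_f \right|_{\epsilon_1 = \epsilon_2 = 0}.
\]
Differentiating the equation yields
\[
(\square_g + q_1 + 2 q_2 u_0) w = - 2 q_2 v_1 v_2, \quad w(0,\cdot) = \p_t w(0,\cdot) = 0,
\]
where $v_j$ is the linearization in the $\epsilon_j$-direction. Since $w|_U$ is known from $S$ and the linear operator $\square_g + (q_1 + 2 q_2 u_0)$ has just been determined on $U$, the product $q_2 v_1 v_2$ is determined on $U$. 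Choosing $f_1, f_2$ so that $v_1 v_2$ is nonzero at a given point recovers $q_2$ there; then $q_1 = (q_1 + 2 q_2 u_0) - 2 q_2 u_0$ on $U$, and finally $F = \square_g u_0 + q_1 u_0 + q_2 u_0^2$ on $U$ from the equation satisfied by $u_0$.

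The main technical issue is ensuring the nonvanishing of $v_1$ (resp.\ $v_1 v_2$) at each point of $U$. This is a soft fact about the linear wave equation: a small compactly supported source $f_1$ placed slightly to the past of $p$ within $U$ produces $v_1$ nontrivial in a forward causal neighborhood of its support, and similarly for $v_2$. Boundary points of $U$ may be handled by continuity of the smooth coefficients, and in any case pointwise recovery on a dense open subset of $U$ suffices.
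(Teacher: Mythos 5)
Your proposal follows essentially the same route as the paper: read off $u_0=S(0)$ on $U$, use the first linearization to determine $q_1+2q_2u_0$ on $U$ from $(q_1+2q_2u_0)v_1=f_1-\square_g v_1$, use the second linearization to determine $q_2$ from the product term, and then back out $q_1$ and $F$. The only genuine weak point is your justification of the nonvanishing of $v_1$ (and $v_1v_2$) at a prescribed point $p\in U$. It is \emph{not} a soft fact that a bump source placed slightly to the causal past of $p$ produces a solution that is nonzero at $p$: by Huygens-type lacunae the forward solution of a wave equation can vanish identically in open regions inside the forward light cone, and even where it is nontrivial it may have a zero exactly at $p$. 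Your fallback ``recovery on a dense open subset suffices'' is the right idea but is asserted rather than proved; as stated, the density of $\bigcup_{f_1}\{v_1^{f_1}\neq 0\}$ in $U$ still needs an argument.

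The paper closes this gap with a cleaner device that you may want to adopt: instead of choosing the source and hoping the solution is nonzero at $p$, prescribe the solution first. Pick any $h\in C_0^\infty(U)$ with $h(x_0)\neq 0$ and \emph{define} $f_1:=\square_g h+(q_1+2q_2u_0)h\in E^s_c(U)$; by uniqueness for the linear Cauchy problem, $h$ is precisely the first linearization corresponding to this source. This is legitimate even though $f_1$ depends on the unknown coefficients, because the lemma is a uniqueness statement: one only needs to know that \emph{some} admissible source realizes a solution nonvanishing at $x_0$, and then the identity $(q_1+2q_2u_0-\tilde q_1-2\tilde q_2u_0)h=0$ on $U$ (and its second-order analogue with $h^2$ in place of $v_1v_2$) yields the conclusion at $x_0$. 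Alternatively, your density claim can be salvaged by noting that if $f_1$ is supported and nonzero in an open $V\subset U$, then $v_1$ cannot vanish identically on $V$ (else $f_1=(\square_g+q_1+2q_2u_0)v_1=0$ on $V$), so the nonvanishing sets are open and meet every open subset of $U$; either repair completes your argument.
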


\begin{proof} %
Let $S_{(q_1,q_2,F)}: E_c^s(U) \to E^{s+1}(U)$ denote the source-to-solution map
\[
  S_{(q_1,q_2,F)}(\epsilon f) = u|_U,
\]
where $u$ is the unique solution of 
\begin{equation}\label{eq: determination in U}
\begin{cases}
\square_g u + q_1 u + q_2u^2 = F+\epsilon f\\
u(0)=\phi_0,\, \p_t u(0)=\phi_1.
\end{cases}
\end{equation}
For smooth coefficients $q_1,q_2,F$, this map is $C^\infty$ in the Fr\'echet sense with respect to $\epsilon f$ near $\epsilon f=0$.  

Assume that for two triples $(q_1,q_2,F)$ and $(\tilde q_1,\tilde q_2,\tilde F)$ we have 
\[
  S_{(q_1,q_2,F)} = S_{(\tilde q_1,\tilde q_2,\tilde F)}.
\]
Then the corresponding solutions $u$ and $\tilde u$ coincide on $U$, in particular
\[
  u_0|_U = S_{(q_1,q_2,F)}(0) = S_{(\tilde q_1,\tilde q_2,\tilde F)}(0) = \tilde u_0|_U.
\]

Let 
\[
  h := \partial_\epsilon u|_{\epsilon=0}, \qquad \tilde h := \partial_\epsilon \tilde u|_{\epsilon=0}.
\]
Then $h$ and $\tilde h$ satisfy the linearized equations
\begin{align*}
\Box_g h + (q_1+2q_2 u_0) h &= f, \\
\Box_g \tilde h + (\tilde q_1+2\tilde q_2 u_0) \tilde h &= f,
\end{align*}
with zero Cauchy data. Since $S_{(q_1,q_2,F)}= S_{(\tilde q_1,\tilde q_2,\tilde F)}$, we have $h=\tilde h$ on $U$. Subtracting gives
\[
\big(q_1+2q_2 u_0 - \tilde q_1 - 2\tilde q_2 u_0\big) h = 0 \quad\text{in } U.
\]
Fix $x_0\in U$. Choose any $h\in C_0^\infty(U)$ with $h(x_0)\neq 0$, and set 
\[
  f = \Box_g h + (q_1+2q_2 u_0)h \in E_c^s(U).
\]
By uniqueness of the linear problem, $h$ is the corresponding solution, so such an $h$ indeed arises from some source $f$. Hence at $x_0$ we obtain
\[
q_1(x_0) + 2 q_2(x_0) u_0(x_0) = \tilde q_1(x_0) + 2 \tilde q_2(x_0) u_0(x_0).
\]

Let next
\[
  w := \partial_\epsilon^2 u|_{\epsilon=0}, \qquad \tilde w := \partial_\epsilon^2 \tilde u|_{\epsilon=0}.
\]
Then $w$ and $\tilde w$ satisfy
\begin{align*}
\Box_g w + (q_1+2q_2 u_0) w + 2 q_2 h^2 &= 0, \\
\Box_g \tilde w + (\tilde q_1+2\tilde q_2 u_0) \tilde w + 2 \tilde q_2 h^2 &= 0,
\end{align*}
with zero Cauchy data. From $S_{(q_1,q_2,F)} = S_{(\tilde q_1,\tilde q_2,\tilde F)}$ we have $w=\tilde w$ on $U$, so subtracting yields
\[
\big(q_1+2q_2 u_0 - \tilde q_1 - 2\tilde q_2 u_0\big) w + 2(q_2-\tilde q_2) h^2 = 0\quad\text{on } U.
\]
At $x_0$, the first bracket vanishes by the earlier computation, so
\[
  2(q_2(x_0)-\tilde q_2(x_0))\,h(x_0)^2 = 0.
\]
Since $h(x_0)\neq0$, this implies $q_2(x_0)=\tilde q_2(x_0)$, and hence also $q_1(x_0)=\tilde q_1(x_0)$. Since $x_0$ was arbitrary, we have $q_1=\tilde q_1$ and $q_2=\tilde q_2$ on $U$.

Finally, subtracting the original nonlinear equations in \eqref{eq: determination in U} and using $q_1=\tilde q_1$, $q_2=\tilde q_2$, and $u=\tilde u$ on $U$ gives $F=\tilde F$ on $U$.
\end{proof}

\noindent
Finally, in the following sections, we show that $q_2(x) = \tilde q_2(x)$, for $x \in D(U)$.
Note that it is enough to consider $x \in D(U)$, such that $q_2(x) \neq 0$
and $\tilde q_2(x) \neq 0$. Indeed, since $A:=\{ x \in D(U) \,:\,  q_2(x) \neq 0 \text{ and } \tilde q_2(x) \neq 0  \}$ we have $\overline A = D(U)$. Since $q_2=\widetilde q_2$ in $A$, then by continuity and \eqref{eq_sign_cond} the equality holds also in $D(A)$.

\subsection{Gaussian beams} \label{sec_GBs}

In this section we review the construction and properties of Gaussian beams,  
which are parameter dependent solutions to a wave equation $(\Box_g+Q)v=0$ that concentrates in the neighbourhood of a given lightlike geodesic.
We follow mainly the presentation in \cite{FO20}, \cite{KKL01} and \cite{lassas2025stability}. For more 
on Gaussian beams see \cite{KKL01} and \cite{Ralston}.

Let $Q \in C^\infty(N)$ 
and $W \subset N$ is some domain. A Gaussian beam $v$ is  a solution to 
\begin{equation} \label{eq_GB_eq}
\begin{aligned}
(\Box_g + Q)v =  0 \quad  \text{ on } \quad W, %
\end{aligned}
\end{equation}
of the form
\begin{equation} \label{eq_GB_form}
\begin{aligned}
v = e^{i \s \Phi/h} a + r,
\end{aligned}
\end{equation}
where $h \in (0,1)$ is a parameter and  
$r$ is a remainder term that becomes small in a Sobolev norm as $h$ becomes small,
and where the amplitude $a$ and phase $\Phi$ will be specified in more detail below.
The main feature of the Gaussian beams are that they
concentrate near a given lightlike geodesic $\gamma$, when $h$ becomes small. We call the part 
\begin{equation} \label{eq_formal_GB}
\begin{aligned}
\hat v := e^{i \s \Phi/h} a,
\end{aligned}
\end{equation}
without the  correction term $r$ a formal (or approximate) Gaussian beam. The formal Gaussian beam $\hat v$
is an approximate solution of \eqref{eq_GB_eq}. In the following definitions and results
we make the above more precise.

\begin{definition}[Formal Gaussian beam]\label{def_FGB}
A formal Gaussian beam $\hat v$ of degree $K$ along a lightlike geodesic $\gamma$
is a function of the form	\eqref{eq_formal_GB}, where $\Phi \in C^\infty(N)$
and $a \in C_0^\infty( N)$,
are such that the conditions (1)--(5) below hold.

\medskip
\noindent
We use the notation
\begin{align*}
(\Box_g + Q) \hat v  = e^{i  \Phi/h} \big(  (\Box_g + Q)  a  + h^{-2}  \mathbf{H}( \Phi)a  - i h^{-1} \mathbf{T} a  \big),
\end{align*}
where $\mathbf H \Phi := \langle \nabla \Phi ,\, \nabla \Phi  \rangle_g$ and 
$\mathbf T a:= 2\langle \nabla \Phi ,\, \nabla a  \rangle_g - (\Box_g \Phi) a$.
Moreover we let $(z^0,z^1,\dots,z^n) = (s,z') \in I \times B_{\delta'}$ denote Fermi coordinates. 

\begin{enumerate}
\item The phase function $\Phi$ satisfies the eikonal equation $\mathbf H \Phi=0$ to higher order on $\gamma$ in the sense that
\begin{equation} \label{eq_Phi_eqs}
\begin{aligned}
\p^\alpha_z \mathbf H \Phi (s,0) = 0,\quad s \in I, %
\end{aligned}
\end{equation}
for a multi-index $\alpha$,  $|\alpha| \leq K$.

\item The phase $\Phi$ satisfies  the following conditions
\begin{equation} \label{eq_Phi_cond}
\begin{aligned}
\operatorname{Im} \Phi(z)|_\gamma = 0,\qquad \operatorname{Im}\Phi(z) \geq C |z'|^2, \quad \forall z \in I \times B_{\delta'}. 
\end{aligned}
\end{equation}

\item The amplitude function $a$ can be expanded in powers of $h$ as 
\begin{equation} \label{eq_GB_amplitude}
\begin{aligned}
a = \chi\sum_{j=0}^K a_j h^j, \qquad a_j = \sum_{i=0}^K a_{j,i},
\end{aligned}
\end{equation}
where $a_{j,i}$ are complex-valued homogeneous polynomials in $z^1,..,z^n$, and $\chi(z) := \tilde \chi(|z|/\delta')$,
$ \tilde \chi \in C^\infty_0(\R)$ is non-negative and
$\tilde \chi(t)|_{ |t|\leq 1/4} = 1$ and  $\tilde \chi(t)|_{ |t|\geq 1/2} = 0$, and where $\delta' > 0$ is the radius of the  tubular neighborhood
$B_{\delta'} \times I$ of $\gamma$ where the Fermi coordinates are defined.

\item The amplitude functions $a_j$ satisfy the following transport type equations 
\begin{equation} \label{eq_GB_transport}
\begin{aligned}
\p^\alpha_z (\mathbf T a_0) (s,0) &= 0, \qquad   \forall s \in I, \\
\p^\alpha_z (-i\mathbf T a_k + (\Box_g + Q) a_{k-1} ) (s,0) &= 0, \qquad   \forall s \in I,
\end{aligned}
\end{equation}
for a multi-index $\alpha$,  $|\alpha| \leq K$.
\end{enumerate}
\end{definition}

\noindent
It is not difficult to see that the formal Gaussian beam $\hat v$ of definition \eqref{def_FGB}
is an approximate solution of \eqref{eq_GB_eq} as the following lemma states (see proof of Lemma 2 in  sec. 4.2
in \cite{FO20}).

\begin{lemma}\label{lem_Formal_GB_approx_sol}
Let $\hat v$ be a formal Gaussian beam of degree $K$ and $\gamma$ the corresponding geodesic, which intersects
$M_T$ and which has endpoints outside $M_T$. Then for all $h > 0$
 \begin{equation}\label{eq_formal_GB_est}
\norm{ (\square_g + Q) \hat v_h}_{H^k(M_T)} = \mathcal{O} \big( h^{\frac{K+1}{2} - \frac{5}{4} - k }\big), \qquad
\norm{\hat v_h}_{L^\infty ( M_T)} = \mathcal{O}(1), 
\end{equation}
as $h \to 0$.
\end{lemma}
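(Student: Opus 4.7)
The plan is to apply $(\square_g+Q)$ directly to $\hat v = e^{i\Phi/h}a$, regroup the result by powers of $h$, and use the eikonal equation and transport equations in Definition \ref{def_FGB} to show that each coefficient vanishes to high order on $\gamma$; the bound then follows from a standard Gaussian scaling in the transverse Fermi coordinates.

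After substituting $a = \chi\sum_{j=0}^K h^j a_j$ in the identity
\[
(\square_g+Q)\hat v = e^{i\Phi/h}\bigl(h^{-2}\mathbf H\Phi\cdot a - ih^{-1}\mathbf T a + (\square_g+Q)a\bigr)
\]
and collecting powers of $h$, the coefficient of $h^{-2}$ is $\chi\,\mathbf H\Phi\cdot a_0$, the coefficient of $h^m$ for $-1\le m\le K-1$ (with the convention $a_{-1}=a_{K+1}=0$) has the form
\[
\chi\bigl(\mathbf H\Phi\cdot a_{m+2} - i\mathbf T a_{m+1} + (\square_g+Q)a_m\bigr),
\]
and at order $h^K$ there is a bounded term $\chi(\square_g+Q)a_K$; all remaining pieces $R_\chi$ carry at least one derivative of the cutoff $\chi$. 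The eikonal equation \eqref{eq_Phi_eqs} gives $|\mathbf H\Phi(s,z')|\lesssim |z'|^{K+1}$, and the transport equations \eqref{eq_GB_transport} give $|{-}i\mathbf T a_{m+1}+(\square_g+Q)a_m|\lesssim|z'|^{K+1}$ (and similarly for $\mathbf T a_0$). Hence every coefficient at order $h^m$ with $m\le K-1$ is of the form $\chi\,G_m$ with $|G_m(s,z')|\lesssim|z'|^{K+1}$ on the support of $\chi$.

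The $H^k$ estimate is then a routine Gaussian scaling argument. For any smooth compactly supported $F$ with $|F|\lesssim|z'|^{K+1}$, the bound $\operatorname{Im}\Phi\ge C|z'|^2$ and the substitution $z'=\sqrt h\,w$ give
\[
\|e^{i\Phi/h}F\|_{L^2(M_T)}^2 \lesssim \int_I\!\int_{\R^n} e^{-2C|z'|^2/h}|z'|^{2(K+1)}\,dz'\,ds \lesssim h^{K+1+n/2}.
\]
Since each coordinate derivative on $e^{i\Phi/h}$ produces a factor $h^{-1}\partial_i\Phi$ with $\partial_i\Phi$ bounded, an exponent comparison shows that the dominant contribution to $\|e^{i\Phi/h}F\|_{H^k}$ comes from placing all $k$ derivatives on the exponential, yielding $\|e^{i\Phi/h}F\|_{H^k}\lesssim h^{(K+1)/2 + n/4 - k}$. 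Multiplying by the worst prefactor $h^{-2}$, coming from the $h^{-2}\mathbf H\Phi\cdot a_0$ term, and specializing to $n=3$ produces $h^{(K+1)/2 - 5/4 - k}$ as stated; all other coefficients give strictly better bounds. The terms in $R_\chi$ are supported in $\{|z'|\ge \delta'/4\}$, where $\operatorname{Im}\Phi\ge c>0$, so they decay like $e^{-c/h}$ and contribute $\mathcal O(h^\infty)$ in every Sobolev norm. The $L^\infty$ estimate is immediate: $|\hat v|\le|a|$ because $\operatorname{Im}\Phi\ge 0$, and $a=\chi\sum_{j=0}^K h^j a_j$ is uniformly bounded for $h\in(0,1)$ on the compact support of $\chi$.

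The main technical obstacle is the bookkeeping in the algebraic step: one must verify carefully that, after regrouping by powers of $h$, the combinations appearing at each order are precisely the ones that the eikonal and transport equations are designed to kill to order $K+1$ on $\gamma$. Once that identity is in place, everything else is a standard Gaussian integral computation together with the elementary observation that each derivative of $e^{i\Phi/h}$ contributes at worst a factor of $h^{-1}$, and that the dominant contribution in $H^k$ comes from placing all derivatives on the exponential rather than on the slowly-varying amplitude.
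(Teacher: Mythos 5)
Your proof is correct and is essentially the standard argument that the paper itself relies on (the paper does not reprove this lemma but defers to the proof of Lemma 2 in Section 4.2 of \cite{FO20}): expand $(\square_g+Q)\hat v$ in powers of $h$, use the eikonal and transport conditions to get $O(|z'|^{K+1})$ vanishing of every coefficient through order $h^{K-1}$, dispose of the cutoff terms by the uniform positivity of $\operatorname{Im}\Phi$ off the core of the tube, and conclude by Gaussian scaling, with the worst contribution coming from $h^{-2}\mathbf{H}\Phi\cdot a_0$ and from placing all $k$ derivatives on the exponential. Your observation that the exponent $-\tfrac{5}{4}$ reflects the transverse volume gain $h^{n/4}$ with $n=3$ is accurate; for $n>3$ the same computation yields a strictly better power, so the stated $\mathcal{O}$ bound still holds, and the $h^K$ tail term $(\square_g+Q)a_K$ is indeed subordinate as you assert.
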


\noindent
The remainder term $r$ in \eqref{eq_formal_GB} corrects the formal Gaussian beam
$\hat v$, so that  $v$ becomes to be a solution to the wave equation \eqref{eq_GB_eq}.
The remainder $r$ is moreover small for small $h>0$,
as the following Lemma shows. This is a consequence of Lemma \ref{lem_Formal_GB_approx_sol}.

\begin{lemma} \label{lem_GB_solves_wave_eq}
Suppose $\hat v$ is a formal Gaussian beam of degree $K$ on $M_T$. Then there exists an 
$r \in C^{\infty}(M_T)$ such that $v := \hat v + r$ solves \eqref{eq_GB_eq} in $M_T$, and we have the estimate
$$
\| r \|_{H^k(M_T)} = \mathcal{O} \big( h^{\frac{K+1}{2} - \frac{1}{4} - k}\big).
$$
\end{lemma}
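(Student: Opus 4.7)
The plan is to construct $r$ as the solution of a linear Cauchy problem with zero data and source equal to $-(\square_g+Q)\hat v$, and then use standard linear energy estimates together with Lemma~\ref{lem_Formal_GB_approx_sol} to obtain the claimed $H^k$-bound.

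First, I would consider the Cauchy problem
\begin{equation*}
\begin{cases}
(\square_g+Q)r = -(\square_g+Q)\hat v & \text{in } M_T,\\
r|_{t=0}=0,\quad \partial_t r|_{t=0}=0 & \text{on } M,
\end{cases}
\end{equation*}
and note that by definition $v=\hat v+r$ then solves $(\square_g+Q)v=0$ in $M_T$. Since $\hat v\in C^\infty(M_T)$, the source $-(\square_g+Q)\hat v$ is smooth, and by the well-posedness theory for linear wave equations on globally hyperbolic Lorentzian manifolds (as recalled in Appendix~\ref{sec_wellposed}) this problem admits a unique solution $r\in C^\infty(M_T)$.

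Next I would invoke the standard energy estimate, which for the linear wave equation with zero Cauchy data gives, for each $k\in\mathbb{N}$, a constant $C_k$ (independent of $h$) such that
\begin{equation*}
\norm{r}_{H^k(M_T)}\le C_k\,\norm{(\square_g+Q)r}_{H^{k-1}(M_T)}
=C_k\,\norm{(\square_g+Q)\hat v}_{H^{k-1}(M_T)}.
\end{equation*}
Substituting Lemma~\ref{lem_Formal_GB_approx_sol} with $k$ replaced by $k-1$ yields
\begin{equation*}
\norm{r}_{H^k(M_T)}=\mathcal{O}\bigl(h^{\frac{K+1}{2}-\frac{5}{4}-(k-1)}\bigr)
=\mathcal{O}\bigl(h^{\frac{K+1}{2}-\frac{1}{4}-k}\bigr),
\end{equation*}
which is exactly the asserted bound. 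The smoothness of $r$ follows either by applying the above estimate for all $k$ and using Sobolev embedding, or directly by the smoothness theory for the linear wave equation with smooth data and source.

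I expect the only real point requiring care to be the one-order gain of regularity in the energy estimate: the source of the wave equation enters through an $H^{k-1}$ norm rather than $H^k$, and this is precisely what upgrades the $\mathcal{O}(h^{(K+1)/2-5/4-k})$ bound of Lemma~\ref{lem_Formal_GB_approx_sol} to the $\mathcal{O}(h^{(K+1)/2-1/4-k})$ bound claimed here. Everything else is an application of the linear well-posedness machinery to an explicit source term whose size in $h$ is already controlled.
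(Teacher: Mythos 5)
Your proposal is correct and follows essentially the same route as the paper: define $r$ via the linear Cauchy problem with source $-(\square_g+Q)\hat v$ and zero data, then combine the energy estimate of Proposition~\ref{prop_energy_est_lin} with the bound of Lemma~\ref{lem_Formal_GB_approx_sol} applied at order $k-1$. The exponent bookkeeping $\frac{K+1}{2}-\frac{5}{4}-(k-1)=\frac{K+1}{2}-\frac{1}{4}-k$ matches the paper exactly.
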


\begin{proof} 
The remainder term $r$ needs to solve the equations
\begin{equation}
\begin{cases}\label{eq_for_v}
(\Box_g + Q )r = -(\Box_g + Q )\hat v , &\text{ in } M_T,\\
r(0,x)= 0,\,\, \p_t r(0,x) = 0, &\text{ on } M.
\end{cases} 
\end{equation}
The linear theory guarantees the existence of a unique solution $r$ to this problem, 
see~\cite[Theorem~13 and Corollary~17]{Bar15}. 
By the energy estimate for the linear problem in Proposition \ref{prop_energy_est_lin} 
and the estimate \eqref{eq_formal_GB_est}, we have that
\begin{align*}
\| r \|_{H^k(M_T)} 
\leq C \| (\Box_g + Q )\hat v \|_{H^{k-1}(M_T)} 
= \mathcal{O} \big( h^{\frac{K+1}{2} - \frac{1}{4} - k}\big). 
\end{align*}
\end{proof}

\noindent
In order to prove the existence of formal Gaussian beams one  needs to construct the 
phase function $\Phi$ and amplitude functions $a_j$. Next we will review the main steps 
of this construction with an emphasis on the parts that are of importance here.
The phase function $\Phi$ is constructed as a sum
\begin{equation} \label{eq_Phi}
\begin{aligned}
\Phi(s,z') = \sum_{j=0}^K \Phi_j(s,z'), 
\end{aligned}
\end{equation}
where $(s,z')$ are the Fermi coordinates and $\Phi_j$ are complex-valued homogeneous polynomials in $z^1,..,z^n$.
As in subsection 4.2.1 of \cite{FO20} 
on sees from equations \eqref{eq_Phi_eqs} and \eqref{eq_fermi_g} that we can  set 
\begin{equation} \label{eq_Phi01}
\begin{aligned}
\Phi_0(s,z') = 0, \qquad   \Phi_1(s,z') = z^1,   
\end{aligned}
\end{equation}
The $\Phi_2$ needs to be constructed so that \eqref{eq_Phi_cond} holds. Since $\Phi_2$ 
is a homogeneous polynomial we can write it as
\begin{equation} \label{eq_Phi2_as_H}
\begin{aligned}
\Phi_2(s,z') = \sum_{i,j=1}^K H_{ij}(s) z^iz^j.
\end{aligned}
\end{equation}
It is not difficult to see that  equation \eqref{eq_Phi_eqs}, with $|\alpha|=2$ implies that
$H(s)$ satisfies the matrix valued Riccati equation
\begin{equation} \label{eq_Riccati}
\begin{aligned}
\frac{d}{ds} H + HCH + D = 0, \qquad s \in I,
\end{aligned}
\end{equation}
see subsection 4.2.1 of \cite{FO20}, where
\begin{equation} \label{eq_C_D}
\begin{aligned}
C = \operatorname{diag}(0,2,2,\dots), \qquad D_{ij} := \p^2_{ij} g^{11}, 
\end{aligned}
\end{equation}
and where $\operatorname{diag}$ stands for a diagonal matrix. The 
solvability of this equation is investigated in \cite{KKL01}, see  section 2.4 Lemma 2.56. 
In accordance with \cite{FO20} we state  the following result.

\begin{lemma} \label{lem_Riccati}
Let $s_0 \in I$ and set the initial value $H(s_0) = H_0$, where the matrix $H_0$ is s.t.	
$\operatorname{Im} H_0 >0$. The Riccati equation \eqref{eq_Riccati} admits a unique solution $H$ on $I$.
We have moreover that $\operatorname{Im} H(s) >0$, for $s \in I$, and the factorization
$H(s)= Z(s)Y^{-1}(s)$, where
\begin{equation} \label{eq_Y_and_Z}
\begin{aligned}
\frac{ d}{ ds } Y =  CZ,\quad Y(s_0) = \operatorname{Id} \qquad
\frac{ d}{ ds  } Z = -DY, \quad Z(s_0) = H_0.
\end{aligned}
\end{equation}
The matrix $Y(s)$, $s \in I$ is invertible, and we have the following formula
$$
\det( \operatorname{Im} H(s))\, (\det (Y(s)) )^2 = \det(\operatorname{Im}(H(s_0))).
$$
\end{lemma}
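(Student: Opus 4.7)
The plan is to linearize the matrix Riccati equation \eqref{eq_Riccati} via the substitution $H = Z Y^{-1}$, which converts it into the linear system \eqref{eq_Y_and_Z}. Since that system has smooth coefficients on the bounded interval $I$, classical ODE theory produces a unique smooth solution $(Y, Z)$ defined throughout $I$. A direct computation, using $\tfrac{d}{ds}(Y^{-1}) = -Y^{-1}(CZ)Y^{-1}$ and $\dot Z = -DY$, shows that wherever $Y$ is invertible, $H := Z Y^{-1}$ satisfies $\dot H + HCH + D = 0$ with $H(s_0) = H_0$; uniqueness of $H$ on $I$ will then follow from uniqueness of $(Y,Z)$.

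The main obstacle is to establish that $Y(s)$ remains invertible on all of $I$, together with the positivity $\operatorname{Im} H(s) > 0$. My approach is via the Hermitian conserved quantity $Y^*Z - Z^*Y$. A short differentiation,
$$
\frac{d}{ds}\bigl(Y^*Z - Z^*Y\bigr) = Z^*CZ - Y^*DY + Y^*DY - Z^*CZ = 0,
$$
using that $C$ and $D$ from \eqref{eq_C_D} are real symmetric, yields $Y^*(s)Z(s) - Z^*(s)Y(s) \equiv H_0 - H_0^* = 2i\operatorname{Im} H_0$ for all $s \in I$. If $Y(s_1)v = 0$ for some nonzero $v \in \mathbb{C}^n$, then pairing with $v^*$ on the left gives $0 = 2i\, v^*(\operatorname{Im} H_0)\,v$, which contradicts $\operatorname{Im} H_0 > 0$. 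Hence $Y(s)$ is invertible on all of $I$, and $H = ZY^{-1}$ is globally defined.

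Finally, I would multiply the conservation law on the left by $Y^{-*}$ and on the right by $Y^{-1}$ and use $H = ZY^{-1}$, $H^* = Y^{-*}Z^*$ to rearrange into $Y^*(s)\,\operatorname{Im} H(s)\,Y(s) = \operatorname{Im} H_0$. This identity simultaneously delivers $\operatorname{Im} H(s) > 0$ and, upon taking determinants, the stated relation between $\det \operatorname{Im} H(s)$ and $\det Y(s)$. Symmetry of $H$ (inherited from $H_0 = H_0^T$) follows from a separate uniqueness argument: since $C$ and $D$ are symmetric, $H^T$ solves the same Riccati equation with the same initial data, so $H = H^T$ by the uniqueness already obtained. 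The hardest step is securing global invertibility of $Y$, and the key insight is to use complex conjugation rather than plain transposition in the conserved quantity, so that the positivity hypothesis $\operatorname{Im} H_0 > 0$ can actually be brought to bear.
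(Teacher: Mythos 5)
Your argument is correct in substance, and it is essentially the standard proof of this lemma: the paper itself does not prove the statement but cites \cite{KKL01} (Lemma 2.56 there) and \cite{FO20}, and the proof in those references is exactly your linearization $H=ZY^{-1}$ together with the conserved Hermitian form $Y^*Z-Z^*Y$. All the individual steps check out: the differentiation showing $\tfrac{d}{ds}(Y^*Z-Z^*Y)=0$ uses only that $C,D$ are real symmetric; evaluating at $s_0$ gives $2i\operatorname{Im}H_0$ (here you correctly rely on $H_0=H_0^T$, which is implicit in \eqref{eq_Phi2_as_H}, so that $H_0-H_0^*$ equals the entrywise imaginary part times $2i$); and the kernel argument for invertibility of $Y$ is the right way to bring $\operatorname{Im}H_0>0$ to bear. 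One small remark on uniqueness: uniqueness of $(Y,Z)$ does not by itself give uniqueness of $H$ — either invoke local Picard--Lindel\"of for the Riccati equation directly (the right-hand side is locally Lipschitz), or note that any solution $\tilde H$ generates a solution of the linear system via $\dot{\tilde Y}=C\tilde H\tilde Y$, $\tilde Z=\tilde H\tilde Y$ with the same initial data, and then apply uniqueness of $(Y,Z)$.

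The one point you should not gloss over is the final determinant identity. Taking determinants in $Y^*(s)\operatorname{Im}H(s)\,Y(s)=\operatorname{Im}H_0$ yields
\[
\det\bigl(\operatorname{Im}H(s)\bigr)\,\lvert\det Y(s)\rvert^2=\det\bigl(\operatorname{Im}H_0\bigr),
\]
with the \emph{modulus} squared, not $(\det Y(s))^2$ as written in the lemma. Since $Y$ is genuinely complex (already for $n=1$, $C=2$, $D=0$, $H_0=i$ one gets $Y(s)=1+2is$), the two expressions differ, and your identity cannot produce the literal statement; the version with $\lvert\det Y\rvert^2$ is the correct one and is what appears in \cite{KKL01}. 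So rather than claiming to obtain ``the stated relation,'' you should state the identity you actually prove and flag the discrepancy with the displayed formula in the lemma.
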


\noindent
For the construction of the  higher order terms $\Phi_{j}$ with $j\geq 3$ we refer to \cite{KKL01} and \cite{FO20}.

\medskip
\noindent
Next we turn to the  construction of the amplitude function $a$ in \eqref{eq_GB_amplitude}.
The amplitude function $a$ can be determined by solving the transport type equations in \eqref{eq_GB_transport}
for $a_k$, $k=0,..,K$.
For a detailed construction of these we refer to \cite{KKL01} section 2.4.19. 
Here we only specify how  $a_{0}$ and $a_{1}$ are obtained, and focus 
in particular on $a_{0,0}$ and $a_{1,0}$, which will be needed later.
To determine $a_{0,j}$ we consider the zero order terms in $h$ in \eqref{eq_GB_transport}, when $\alpha=0$,
which gives
\begin{equation}\label{eq_transport_for_a0}
\begin{aligned}
&-2\s \langle  \nabla \Phi, \nabla a_0 \rangle_g + \square_g \Phi a_0=0.
\end{aligned}
\end{equation}
From this we obtain an ODE in the variable $s$ for  $a_{0,0}$ on the geodesic $\gamma$.
In the Fermi coordinates and using \eqref{eq_Phi01}, we see that 
\eqref{eq_transport_for_a0} gives
$$
-2 \sum_{i,j=0}^{n} g^{ij} \p_i a_0 \p_j \Phi -  \sum_{i=2}^{n}  \p^2_{ii} \Phi a_{0} = 0. %
$$
On the geodesic $\gamma$, where the metric can be expressed using \eqref{eq_fermi_g} and  where $z=0$, this reduces to
an ODE for $a_{0,0}$, which reads as
$$
2\p_s a_{0,0} +  \operatorname{Tr}(CH) a_{0,0} = 0,\qquad \operatorname{Tr}(CH) = \sum_{i=2}^{n}  \p^2_{ii} \Phi,
$$
where we used that on the geodesic we have that $\p^2_{ij} \Phi = 2H_{ij}$, due to \eqref{eq_Phi2_as_H}.
Using Lemma \ref{lem_Riccati} and the matrix formula $\operatorname{Tr} \log Y = \log \det Y$, for an
invertible $Y$, we can write
\begin{equation} \label{eq_trCH}
\begin{aligned}
\operatorname{Tr}(CH) = 
\operatorname{Tr}(CZY^{-1}) =
\operatorname{Tr}(\frac{d}{ds}Y Y^{-1}) = 
\frac{d}{ds} \operatorname{Tr}(\log Y) =
\frac{d}{ds} \log  \det Y.
\end{aligned}
\end{equation}
Solving the above ODE for $a_{0,0}$ gives thus and that 
$$
a_{0,0}(s) = e^{-\frac{1}{2}\int\operatorname{Tr}(CH) } = (\det Y(s))^{-1/2}.
$$
We set the initial value to be consistent with Lemma \ref{lem_Riccati} 
determined by $\det Y(s_0) = 1$.

For $a_{1}$ we obtain similarly from \eqref{eq_GB_transport} and from Fermi coordinate representation of $g$,
the equation
\begin{equation} \label{eq_sum_ak}
\begin{aligned}
-2 \sum_{i,j=0}^{n} g^{ij} \p_i a_1 \p_j \Phi -  \sum_{i=2}^{n}  \p^2_{ii} \Phi a_{1} =  (\square_g + Q) a_{0}.
\end{aligned}
\end{equation}
Using \eqref{eq_Phi01} we see that 
on the geodesic $\gamma$ we have an ODE in the $s$ variable, which looks like
$$
-2 \p_s a_{1,0} -  \operatorname{Tr}(CH) a_{1,0} = \square_g a_0 + Qa_{0,0}.
$$
Note that there is no
$a_{1,k}$ terms here, because with $z=0$, the $j$-index that survives in \eqref{eq_sum_ak} is $j=1$,
since $\Phi(s,z) = \sum_{|\alpha|}\Phi_\alpha(s) z^\alpha$,
with $\Phi_0 = 0, \Phi_1 = z^1$ 
and since $g|_\gamma = 2 ds \otimes dz^1 + \dots$ the $i$-index that goes with $j$ and survives is $i=0$. So we are left with $\p_s$.
We set the initial value $a_{1,0}(s_0) = 0$. 
This initial value problem  can solved explicitly on $I$, taking into account \eqref{eq_trCH}. 
In this way we obtain
\begin{equation} \label{eq_a_integrals}
\begin{aligned}
a_{1,0}(s) &= a^\sharp_{1,0} (s) + a^\flat_{1,0}(s) . \\
a^\sharp_{1,0} (s) &= \frac{ 1 }{ 2 } (\det Y(s))^{-1/2} \int_{s_0}^{s}	\Box_g a_0 (t,0) (\det Y(t))^{1/2}  \,dt\\
a^\flat_{1,0} (s) &=  \frac{ 1 }{ 2 } (\det Y(s))^{-1/2} \int_{s_0}^{s} Q(t,0)	\,dt.
\end{aligned}
\end{equation}
For the construction of the other terms in the amplitude expansion of $a$ we refer the reader to 
\cite{KKL01} and \cite{FO20}.

\medskip
\noindent
Finally we want to create Gaussian beams using  compactly supported source terms in $U$.
Next we will specify how to chose these sources. 

Let $q_0\in U$ and assume that $\gamma: (a,b) \to N$ is a lightlike geodesic, with $ \gamma(a) \in U$ and $\gamma(b) \notin M_T$, and $\gamma(s_0)=q_0$ for some $a<s_0<b$, and denote
$$
 \dot \gamma(s_0) = V.
$$
Let  $\hat v$ be a formal Gaussian beam associated to $\gamma$ that passes through the point $q_0$. 
In the Fermi coordinate tube $I \times B_{\delta'}$ we have that $q_0 = (s_0,0)$.
Assume furthermore that $\delta'$ is small enough, so that the cut-off $\chi$ in the amplitude $a$, see \eqref{eq_GB_amplitude}, is such that
$$
\supp_{z \in B(0 ,\delta')} \big( a(s, z ) \big) \Subset U  ,\qquad \forall s \in I',
$$
for some small interval $I' := (s_0-\eps, s_0+\eps)$.
We pick the cut-off functions $\eta_\pm \in C_0^\infty(I \times B_{\delta'})$, defined on the Fermi coordinate patch,
such that
for $(s,z) \in \supp(a)$ 
\begin{equation} \label{eq_cut_offs}
\begin{aligned}
&\eta_-(s,z)|_{s < s_0 -\eps }= 0, \qquad &\eta_-(s,z)|_{s > s_0}= 1, \\
&\eta_+(s,z)|_{s < s_0 } = 1, \qquad &\eta_+(s,z)|_{s > s_0 +\eps }= 0.
\end{aligned}
\end{equation}
We furthermore extend $\eta_\pm$ to the entire set $I \times M$ by zero, 
so that $\eta_\pm$ is defined also on $M_T$.
To obtain a Gaussian beam $v_f$ given by a source term $f$, with $\supp(f) \Subset U$, we take it as
the solution of the problem
\begin{equation} \label{eq_GB_with_source}
\begin{aligned}
\begin{cases}
(\square_g + Q) v_f =  \eta_{+}(\square_g + Q)(\eta_- \hat v) =: f, & \text{ in } M_T\\
v_f(0,x') = 0,\quad \p_t v_f(0,x')=0,\quad   &x' \in M.
\end{cases}
\end{aligned}
\end{equation}
For the existence of a unique solution to this problem see~\cite[Theorem~13 and Corollary~17]{Bar15}. 
Next we show that the solution $v_f$ is approximates the formal  Gaussian beam $\hat v$ for times
greater than $t_0 + \eps$, with an error bounded by a power of $h$.
We will in fact show prove the estimate
\begin{equation} \label{eq_v_f_rem}
\begin{aligned}
\| v_f - \eta_- \hat v \|_{H^k(M_T)}  
= \mathcal{O} \big( h^{\frac{K+1}{2} - \frac{1}{4} - k}\big),
\end{aligned}
\end{equation}
To derive this estimate we first note that
\begin{align*}
(\Box_g + Q) [v_f - \eta_- \hat v] = (\eta_+ - 1) (\Box_g + Q)(\eta_- \hat v). %
\end{align*}
Now by the energy estimates of the linear problem in Proposition \ref{prop_energy_est_lin} and Lemma \ref{lem_Formal_GB_approx_sol},
we get that
\begin{align*}
\|  v_f -\eta_- \hat v \|_{H^k(M_T)} 
\leq C 
\| (\square_g + Q)  \hat v \|_{H^{k-1}(M_T)} 
= \mathcal{O} \big( h^{\frac{K+1}{2} - \frac{1}{4} - k }\big),
\end{align*}
which proves \eqref{eq_v_f_rem}. Note that we can write
$$
v_f  = \hat v + r, \qquad \text{ with } \qquad r :=  v_f -  \eta_-\hat v,
$$
and that $v_f$ therefore behaves as a Gaussian beam of the form in
\eqref{eq_GB_eq} and \eqref{eq_GB_form}, for  times greater than $t_0 + \eps$.
We will also need solutions that act as time reversed variants of $v_f$. These are
constructed by switching the roles of $\eta_+$ and $\eta_-$ in the above construction,
so that in particular $v_f  = \hat v +  r$, with  $ r :=  v_f -  \eta_+\hat v$, and
$v_f$ be haves as a Gaussian beam for times less than $t_0 - \eps$.

\begin{remark}
Later we will need that the source terms $f$ and $\tilde f$ that are constructed as  above, 
are identical for the coefficients $q_1,q_2,F$ and $\tilde q_1,\tilde q_2, \tilde F$.
Note that this follows from the above construction, particularly from \eqref{eq_GB_with_source}, since the formal Gaussian beam 
$\hat v$ is the same on the cylinder $I'\times B_{\delta'}$ for both sets of coefficients.
This is because 
the coefficients $q_1,q_2,F$ and $\tilde q_1,\tilde q_2, \tilde F$ coincide on the entire set $U$
due to Lemma \ref{lem_coeff_determined_in_U}.
\end{remark}

\section{WKB interaction solutions} \label{sec_WKB_interaction}
In this section, we solve a source problem for the wave equation where the source is given by the product of two Gaussian beams. These solutions arise from the interaction between two colliding Gaussian beams, and we refer to them as WKB interaction solutions.

Specifically, we aim to find solutions to the problem
$$
(\square_g + Q ) w = q \app{v}{i}\app{v}{j},
$$
where $\app{v}{i}$ and $\app{v}{j}$ are as in \eqref{eq_GB_eq} and $q,Q \in C^\infty( N)$. Here,
$\app{v}{i}$ and $\app{v}{j}$ are Gaussian beams corresponding to distinct lightlike geodesics
$\gamma_i$ and $\gamma_j$, respectively. 
We denote by $p_0,\dots,p_N$ the intersection points in $D \setminus U$ of $\gamma_i$ and $\gamma_j$
i.e.
\begin{equation} \label{eq_geodesic_cond1}
\begin{aligned}
\gamma_i \cap \gamma_j \cap (D \setminus U) =
\{   p_0,p_1,\dots,p_N \}.
\end{aligned}
\end{equation}
The WKB interaction solution $w$ will be concentrated around these intersection points,
and have the form of a WKB ansatz in the neighbourhood of the point $p_0$.
The solution $w$ is split into
\begin{equation} \label{eq_wkb_interaction_form}
\begin{aligned}
\app{w}{ij} = \app{\hat w}{ij} + \app{w}{ij}_N + \app{r}{ij},
\end{aligned}
\end{equation}
here $\app{\hat w}{ij}$ will be a formal part supported in a neighbourhood of $p_0$ and $r$ is a remainder term small in $h$.
The second term $\app{w}{ij}_N$ is
an extra term that may or may not be present, and is due to the other 
possible intersection points $ \{ p_1,\dots, p_N \}$. 
\begin{remark}
Here we could also define $\app{w}{ij}_N$ as a formal expansion as we do for $\app{\hat w}{ij}$.
Note however that for the higher order WKB interaction solutions in subsection \ref{sec_high_WKB_interaction} this is not anymore possible.
\end{remark}
We begin by constructing the formal part $\app{\hat w}{ij}$. This is an approximate solution corresponding to a source $F$,
which takes the form of a product of two formal Gaussian beams near the point $p_0$. More specifically, we
consider a source term of the form:
\begin{equation} \label{eq_WKB_source}
\begin{aligned}
F =  q \chi_{p_0} \app{\hat v}{i} \app{\hat v}{j} 
=  \chi_{p_0} e^{i \Phi/h} \sum_{l=0}^{2K} q b_l h^l,
\end{aligned}
\end{equation}
where
\[
  \app{\hat v}{k}  =  e^{i \app{\Phi}{k} /h} \chi_k \sum_{l=0}^{K} \ap{k}_l h^l, \quad k = 1,2, 
\]
and $\chi_{p_0} = \chi_i \chi_j \chi_{B(p_0,r)}$, and where $\chi_i$ and $\chi_j$ are the cut-off functions
related to the amplitudes of the Gaussian beams $\app{v}{i}$ and $\app{v}{j}$, and $B(p_0,r)$ is a small ball around $p_0$.
The $\Phi$ in \eqref{eq_WKB_source} is given by $\Phi = \app{\Phi}{1} + \app{\Phi}{2}$, and  we further require that
\begin{equation} \label{eq_nabla_Phi_nonzero}
\begin{aligned}
\langle \nabla  \Phi , \nabla \Phi  \rangle_g (p) \neq 0 \quad \text{ for } \quad p \in N_0,
\end{aligned}
\end{equation}
where $N_0$ is a neighbourhood of the intersection point $p_0$, which we can set to be 
$$
N_0 := \supp(\chi_{p_0}).
$$
Note that this  differs from the requirement in definition \ref{def_FGB}.
The need for this will become clear in the proof of Lemma
\ref{lem_WKB_source}.

Since  $\Phi =  \app{\Phi}{i} + \app{\Phi}{j}$ and since the gradients $\nabla\app{\Phi}{i}(p_0)$  are lightlike,
we can rewrite condition \eqref{eq_nabla_Phi_nonzero} as
\begin{equation} \label{eq_Phi_nonzero}
\begin{aligned}
\langle \nabla  \Phi , \nabla \Phi  \rangle_g (p_0) = 
2 \langle  \nabla \app{\Phi}{i} ,  \nabla \app{\Phi}{j} \rangle_g (p_0)  \neq 0.
\end{aligned}
\end{equation}
This is something that always holds as Lemma \ref{lem_lightlike_independence} below shows. Note 
that by the continuity of $\nabla \Phi$,  
equation \eqref{eq_Phi_nonzero} holds for all $p \in N_0$.
\begin{lemma} \label{lem_lightlike_independence}
Let $p\in N$ and $\zeta_1, \zeta_2 \in T_pN$ be two linearly independent lightlike vectors. Then
$ \langle \zeta_1 , \zeta_2  \rangle_g \neq 0$.
\end{lemma}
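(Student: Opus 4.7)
The plan is to argue by contradiction: assume $\langle \zeta_1,\zeta_2\rangle_g = 0$ and derive that $\zeta_1$ and $\zeta_2$ must be linearly dependent. The basic intuition is that on a Lorentzian manifold the set of null vectors forms a cone, and two null directions can only be orthogonal if they lie on the same ray of this cone.

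Concretely, I would pick a pseudo-orthonormal basis $e_0,e_1,\ldots,e_n$ of $T_pN$ with $\langle e_0,e_0\rangle_g = -1$ and $\langle e_i,e_j\rangle_g = \delta_{ij}$ for $i,j\geq 1$. Writing $\zeta_k = a_k e_0 + \vec w_k$ with $\vec w_k \in \mathrm{span}(e_1,\ldots,e_n)$, the lightlike condition $\langle \zeta_k,\zeta_k\rangle_g = 0$ reads
\[
 a_k^2 = |\vec w_k|^2, \qquad k=1,2,
\]
where $|\cdot|$ denotes the Euclidean norm on the spatial slice. Note that $a_k=0$ would force $\vec w_k=0$ and hence $\zeta_k=0$, which is ruled out by linear independence; so $a_1,a_2\neq 0$.

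Under the assumption $\langle \zeta_1,\zeta_2\rangle_g = 0$, a direct expansion in this frame gives $\vec w_1\cdot \vec w_2 = a_1 a_2$. Combined with $|\vec w_k| = |a_k|$, this is the equality case of the Cauchy--Schwarz inequality
\[
 \vec w_1\cdot \vec w_2 \leq |\vec w_1|\,|\vec w_2| = |a_1|\,|a_2|,
\]
which can only hold if the spatial parts are positively proportional, say $\vec w_1 = \lambda \vec w_2$ with $\lambda>0$. Matching magnitudes gives $\lambda = a_1/a_2$ (the positivity of $\lambda$ forcing $a_1,a_2$ to share sign), and then
\[
 \zeta_1 = a_1 e_0 + \tfrac{a_1}{a_2}\vec w_2 = \tfrac{a_1}{a_2}\bigl( a_2 e_0 + \vec w_2 \bigr) = \tfrac{a_1}{a_2}\,\zeta_2,
\]
contradicting linear independence.

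The argument is entirely elementary; the only step to be careful about is ruling out the case where one of $a_1,a_2$ vanishes or has the wrong sign, and checking that the Cauchy--Schwarz equality case genuinely yields proportionality of $\vec w_1$ and $\vec w_2$ (rather than just of their norms). No step looks like a serious obstacle. An equivalent, more structural formulation would be to observe that if $\langle \zeta_1,\zeta_2\rangle_g=0$ together with $\langle \zeta_k,\zeta_k\rangle_g=0$, then $\mathrm{span}(\zeta_1,\zeta_2)$ would be a totally null $2$-plane, whereas in Lorentzian signature $(-,+,\ldots,+)$ the maximal dimension of a totally null subspace is $1$; but the explicit Cauchy--Schwarz computation above gives a self-contained proof.
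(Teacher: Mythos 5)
Your proof is correct and follows essentially the same route as the paper's: work in a frame where the metric is Minkowski at $p$, use the null condition to equate the spatial norm with the time component, and then invoke the equality case of Cauchy--Schwarz to force proportionality. The paper simply rescales each $\zeta_j$ so that its time component equals $1$ before comparing, which makes the spatial parts unit vectors and sidesteps the sign bookkeeping for $a_1a_2<0$ that you flag (and which your formula $\lambda=a_1/a_2$ handles in any case).
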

\begin{proof} In normal coordinates the metric is the Minkowski metric at $p$. 
By scaling  $\zeta_j = (\zeta_j^0, \zeta'_j)$, $j=1,2$, we may assume that $\zeta_j  = (1, \zeta_j')$, where
$ \langle \zeta_j', \zeta'_j \rangle_{\R^n} = 1$. 
Assume the contrary that $\langle \zeta_1 , \zeta_2  \rangle_{g} = 0$.
Then $1 = \langle  \zeta_1' , \zeta'_2 \rangle_{\R^n}$ with $\zeta'_j \in \S^{n-1}$.
But this is possible only if $\zeta'_1 = \zeta'_2$. Thus $\zeta_1 = \zeta_2$ which is a contradiction.
\end{proof}

\noindent 
The following lemma shows that the solution $\app{\hat w}{ij}$ corresponding to a source term $F = q \chi_{p_0} \app{\hat v}{1} \app{\hat v}{2}$ 
admits an amplitude expansion similar to that of the formal Gaussian beams when restricted to the neighborhood $N_0$ of the point $p_0$.

\begin{lemma} \label{lem_WKB_source}
Assume that $Q \in C^\infty(M_T)$, and
let $F$ be of the form \eqref{eq_WKB_source}, so that  \eqref{eq_Phi_nonzero} holds 
in the neighbourhood $N_0$ of $p_0$. 
Then there exists a solution $\hat w \in C^\infty(M_T)$ 
to the equation
\begin{equation} \label{eq_wkb_source_prob}
(\Box_g + Q ) \hat w =  F + \hat r, \quad  \text{ in } \quad   M_T,
\end{equation}
where  $\hat w$ can be written in the form
\begin{equation} \label{eq_cj}
\begin{aligned}
\hat  w =   e^{i\Phi/h}  \sum_{j = 2}^{2K+2} c_j h^j, \quad \text{ in } \quad M_T,
\end{aligned}
\end{equation}
and where the $c_j \in C_0^\infty(N_0)$ are given by \eqref{eq_ck} and where the error term $\hat r \in C^\infty_0(N_0)$ 
satisfies the estimate
$$
\|  \hat r \|_{H^k( M_T )} \leq C  h^{2K-k+1}. 
$$
\end{lemma}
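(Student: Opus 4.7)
The plan is to substitute the ansatz $\hat w = e^{i\Phi/h}\sum_{j=2}^{2K+2} c_j h^j$ directly into $(\Box_g + Q)\hat w$ and determine the coefficients $c_2,\ldots, c_{2K+2}$ by matching powers of $h$. Using the standard computation from Definition \ref{def_FGB} applied termwise,
\begin{equation*}
(\Box_g + Q)\hat w = e^{i\Phi/h}\sum_{j=2}^{2K+2}\Bigl(h^{j-2}\mathbf{H}\Phi\cdot c_j - ih^{j-1}\mathbf{T}c_j + h^j(\Box_g + Q)c_j\Bigr),
\end{equation*}
while the source has the expansion $F = e^{i\Phi/h}\chi_{p_0}\sum_{l=0}^{2K} q b_l h^l$ from \eqref{eq_WKB_source}. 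Comparing the coefficients of $h^l$ for $l = 0, 1, \ldots, 2K$ gives, with the convention $c_0 = c_1 = 0$, the identity
\begin{equation*}
\mathbf{H}\Phi\cdot c_{l+2} \;=\; q\chi_{p_0} b_l \;+\; i\mathbf{T}c_{l+1} \;-\; (\Box_g + Q)c_l.
\end{equation*}

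The decisive observation is the non-degeneracy \eqref{eq_nabla_Phi_nonzero}: on $N_0 = \supp(\chi_{p_0})$ one has $\mathbf{H}\Phi = \langle\nabla\Phi,\nabla\Phi\rangle_g \neq 0$, so the recursion collapses to the purely \emph{algebraic} formula
\begin{equation*}
c_{l+2} \;=\; \frac{q\chi_{p_0} b_l + i\mathbf{T}c_{l+1} - (\Box_g + Q)c_l}{\mathbf{H}\Phi}.
\end{equation*}
This is the essential difference with the Gaussian beam construction, where $\mathbf{H}\Phi$ vanishes on the geodesic and forces the transport ODEs \eqref{eq_GB_transport}: at the transversal intersection $p_0$, Lemma \ref{lem_lightlike_independence} guarantees that $\langle\nabla\Phi^{(i)},\nabla\Phi^{(j)}\rangle_g(p_0)\neq 0$, so nothing beyond smooth division is required. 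Inductively, each $c_{l+2}$ lies in $C_0^\infty(N_0)$, since $\chi_{p_0}$ has support in $N_0$, the differential operators $\mathbf{T}$ and $\Box_g+Q$ do not enlarge support, and division by the smooth non-vanishing function $\mathbf{H}\Phi$ is harmless on $N_0$.

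With $c_2,\ldots, c_{2K+2}$ thus constructed, all contributions at orders $h^0, \ldots, h^{2K}$ exactly cancel $F$, while the remaining two orders form the remainder
\begin{equation*}
\hat r = e^{i\Phi/h}\Bigl(h^{2K+1}(\Box_g + Q)c_{2K+1} \;-\; ih^{2K+1}\mathbf{T}c_{2K+2} \;+\; h^{2K+2}(\Box_g + Q)c_{2K+2}\Bigr),
\end{equation*}
which is smooth and compactly supported in $N_0$. The bound $\|\hat r\|_{H^k(M_T)} \leq C h^{2K+1-k}$ follows from the elementary estimate $\|h^m e^{i\Phi/h}\psi\|_{H^k} \leq C h^{m-k}$ valid for any $\psi \in C_0^\infty$, since each derivative hitting $e^{i\Phi/h}$ produces a factor of $h^{-1}$. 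Because $\hat w$ is supported in $N_0 \subset M_T$, the equation $(\Box_g + Q)\hat w = F + \hat r$ holds on all of $M_T$, both sides vanishing outside $N_0$. The main obstacle is purely bookkeeping: expanding the product $\hat v^{(i)}\hat v^{(j)}$ to read off the $b_l$'s and checking that the recursion truly closes at truncation level $2K+2$; no analytic difficulty arises thanks to \eqref{eq_nabla_Phi_nonzero}.
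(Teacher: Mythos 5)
Your proposal is correct and follows essentially the same route as the paper: the same WKB ansatz, the same algebraic recursion for $c_{l+2}$ obtained by dividing by $\langle\nabla\Phi,\nabla\Phi\rangle_g$ (non-vanishing on $N_0$ by \eqref{eq_nabla_Phi_nonzero} and Lemma \ref{lem_lightlike_independence}), and the same identification of $\hat r$ as the uncancelled terms at orders $h^{2K+1}$ and $h^{2K+2}$. The only differences are notational (you phrase the recursion via $\mathbf{H}\Phi$ and $\mathbf{T}$ from Definition \ref{def_FGB}, and your bookkeeping of the powers of $h$ in the remainder is, if anything, slightly more careful than the paper's \eqref{eq_check_r}).
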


\begin{proof}
Our aim is to find a series  $\hat w = e^{ i\Phi/h} \sum_{j = 2}^{2K+2}  c_j h^j$ such that
\begin{equation} \label{eq_hat_w}
\begin{aligned}
(\square_g + Q)  \hat w  = e^{ i\Phi/h} \sum_{j=0}^{2K} q b_j h^j + \hat r. %
\end{aligned}
\end{equation}
We will proceed by finding the coefficients $c_j$ recursively by equating powers of $h$.
Firstly note that
\begin{small}
\begin{align*}
(\square_g +Q)  \hat w
&=
e^{i\Phi/h}  \sum_{j=2}^{2K+2} \big[
h^j  (\square_g+Q)  c_j  
+ i h^{j-1}(\square_g \Phi  c_j + 2 \langle \nabla \Phi , \nabla  c_j  \rangle_g) 
- h^{j-2}\langle  \nabla \Phi ,\nabla \Phi  \rangle_g  c_j \big].
\end{align*}
\end{small}
The series on the right has no zero or first order terms in $h$, so we set
\begin{equation} \label{eq_c0_c1}
\begin{aligned}
 c_0 =  c_1 = 0.
\end{aligned}
\end{equation}
Re indexing by $k=j-2$ and rearranging the terms in powers of $h$ gives
\begin{small}
\begin{align*}
(\square_g  + Q ) \hat w
&=
e^{i\Phi/h}  \sum_{k=0}^{2K} h^k \big[
(\square_g + Q)  c_k  
+ i\square_g \Phi   c_{k+1} + 2i \langle \nabla \Phi , \nabla   c_{k+1}  \rangle_g 
- \langle  \nabla \Phi ,\nabla \Phi  \rangle_g   c_{k+2} \big].
\end{align*}
\end{small}
Equating like powers on the right of this equation and \eqref{eq_hat_w}, gives firstly that 
\begin{equation} \label{eq_c2c3_def}
\begin{aligned}
  c_2 = \frac{ - q b_0 }{ \langle  \nabla \Phi ,\nabla \Phi
\rangle_g }, \qquad 
  c_3 = \frac{ q b_1 - i\square_g \Phi   c_{2} - 2i \langle
\nabla \Phi , \nabla   c_{2}  \rangle_g  }{ \langle  \nabla \Phi ,\nabla
\Phi  \rangle_g }.
\end{aligned}
\end{equation}
More generally we have with \eqref{eq_c0_c1}, that
\begin{equation} \label{eq_ck}
\begin{aligned} 
  c_j = \frac{ q b_{j-2} - (\square_g + Q)   c_{j-2} - i\square_g \Phi   c_{j-1} - 2i \langle \nabla \Phi , \nabla   c_{j-1}  \rangle_g  }
{ \langle  \nabla \Phi ,\nabla \Phi  \rangle_g }, \quad j=2,..,2K + 2.
\end{aligned}
\end{equation}
These equations define the coefficients $  c_j$ in terms of $ b_0,\dots, b_{2K}$ in
a recursive manner, and this determines  $\hat w$. Moreover we have that $  c_j \in C^\infty_0(N_0)$,
because $ b_j \in C^\infty_0(N_0)$.

Note that the above construction results also in an error $\hat r$, which consists
of the leftover terms of $(\Box_g + Q)\hat w$ 
that are not included in the definition of any of the $  c_2,..,  c_{2K+2}$.
These are 
\begin{equation} \label{eq_check_r}
\begin{aligned}
\hat r 
&:=
\big[ (\Box_g + Q) (  c_{2K+2} h^{2K+2} +  c_{2K+1}h^{2K+1} ) \\
&\quad - i\square_g \Phi   c_{2K+2} h^{2K+2} 
- 2i \langle \nabla \Phi , \nabla   c_{2K+2}  \rangle_g h^{2K+2} \big] e^{i\Phi/h}.
\end{aligned}
\end{equation}
For these we have the estimate 
$$
\| \hat r \|_{H^k( M_T)} \leq C  h^{2K-k+1}. 
$$
\end{proof}

\noindent
The previous Lemma shows that we can write $\app{\hat w}{ij}$ using an amplitude expansion modulo some error
in the neighbourhood $N_0$, when $\app{\hat w}{ij}$ is given by a source $F$ that is the product of
two formal Gaussian beams at $N_0$. 

We will now specify  how to obtain $\app{w}{ij}_N$. The $\app{w}{ij}_N$ part of $\app{w}{ij}$ is non zero if there are additional intersection points 
$ \{  p_1,\dots,p_N\}$. We let $\app{w}{ij}_N$ be the solution of
\begin{equation} \label{eq_wkb_w_N_prob}
\begin{aligned}
\begin{cases}
(\square_g + Q) \app{w}{ij}_N = F_N,  &\text{ in } \quad M_T, \\
\app{w}{ij}_N(0,x') = 0,\quad \p_t  \app{w}{ij}_N(0,x')=0, &\text{ on } \quad M.
\end{cases}
\end{aligned}
\end{equation}
where
$$
F_N =  q \chi_{N} \app{\hat v}{i} \app{\hat v}{j}, 
$$
and where $\chi_N$ is a cut-off function supported in the neighbourhood of the additional intersection
points $p_1,\dots,p_N$ and away from $p_0$. Note that \eqref{eq_wkb_w_N_prob} admits a solution by 
the linear theory of the wave equation on Lorenztian manifolds 
see~\cite[Theorem~13 and Corollary~17]{Bar15}.  

We will now add an error term to $\app{\hat w}{ij} + \app{w}{ij}_N$, to obtain the complete solution $\app{w}{ij}$,
with the source term $F$ given by the product of two Gaussian beams including their error terms. 

\begin{proposition} \label{prop_WKB_source_GBs}
Let  $Q \in C^\infty(M_T)$. Then there exists a  $w = \hat w + w_N + r \in C^\infty(M_T)$, where 
$\hat w$ is given by Lemma \ref{lem_WKB_source}, $w_N$ by \eqref{eq_wkb_w_N_prob}, and $r \in C^\infty(M_T)$, 
and $w$ solves the equation 
\begin{equation} \label{eq_wkb_source_prob_GBs}
\begin{aligned}
\begin{cases}
(\square_g + Q) w = q \app{v}{1} \app{v}{2},  &\text{ in } \quad M_T, \\
w(0,x') = 0,\quad \p_t  w(0,x')=0, &\text{ on } \quad M.
\end{cases}
\end{aligned}
\end{equation}
Furthermore we have the estimate
\begin{equation} \label{eq_w_rem}
\begin{aligned}
\| r \|_{H^k(M_T)} &= \mathcal O ( h^{\frac{K+1}{2} + \frac{3}{4} - k}), 
\end{aligned}
\end{equation}
\end{proposition}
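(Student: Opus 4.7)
The natural plan is to take $w$ as the unique solution of \eqref{eq_wkb_source_prob_GBs} provided by linear wave theory and to set $r := w - \hat w - w_N$. Using Lemma \ref{lem_WKB_source}, equation \eqref{eq_wkb_w_N_prob}, and the decomposition $\app{v}{i} = \app{\hat v}{i} + \app{r}{i}$ of Lemma \ref{lem_GB_solves_wave_eq}, one sees directly that $r$ has zero Cauchy data and satisfies $(\Box_g + Q)r = \mathcal{S}$, where
\begin{align*}
\mathcal{S} = {}& q\bigl(1 - \chi_{p_0} - \chi_N\bigr)\app{\hat v}{1}\app{\hat v}{2} \\
& + q\app{\hat v}{1}\app{r}{2} + q\app{r}{1}\app{\hat v}{2} + q\app{r}{1}\app{r}{2} - \hat r.
\end{align*}
By the linear energy estimate of Proposition \ref{prop_energy_est_lin}, the entire task reduces to showing $\|\mathcal{S}\|_{H^{k-1}(M_T)} = \mathcal{O}(h^{(K+1)/2 + 3/4 - k})$.

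The next step is to estimate the five contributions separately, with $K$ taken as large as necessary in the Gaussian beam construction. The $\hat r$ term is immediate: Lemma \ref{lem_WKB_source} gives $\|\hat r\|_{H^{k-1}} = \mathcal{O}(h^{2K-k+2})$, which beats the target comfortably for $K$ large. For the mixed products $q\app{\hat v}{j}\app{r}{l}$, I would invoke the standard algebra-type inequality $\|fg\|_{H^{k-1}} \lesssim \sum_{j=0}^{k-1}\|f\|_{W^{j,\infty}}\|g\|_{H^{k-1-j}}$ together with $\|\app{\hat v}{k}\|_{W^{j,\infty}} = \mathcal{O}(h^{-j})$ (each derivative of $e^{i\Phi/h}$ contributes a factor $h^{-1}$) and $\|\app{r}{l}\|_{H^{k-1-j}} = \mathcal{O}(h^{(K+1)/2 + 3/4 - k + j})$ from Lemma \ref{lem_GB_solves_wave_eq}. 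The powers of $h$ telescope across $j$ to yield exactly $\mathcal{O}(h^{(K+1)/2 + 3/4 - k})$. For the double remainder $q\app{r}{1}\app{r}{2}$ a Moser-type product estimate (combined with Sobolev embedding if $k-1 \leq n/2$, harmless once $K$ is large) gives an $h$-exponent roughly twice as large, well within the target.

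The genuine obstacle is the first term $q(1 - \chi_{p_0} - \chi_N)\app{\hat v}{1}\app{\hat v}{2}$, whose amplitude is of order one in $L^\infty$, so any gain must come from the exponential factor $e^{i(\app{\Phi}{1}+\app{\Phi}{2})/h}$. By \eqref{eq_Phi_cond}, in the Fermi coordinates of each geodesic one has $\operatorname{Im}\app{\Phi}{j} \geq c|z'|^2$, which yields a pointwise weight of the form $\exp(-c(d_1^2 + d_2^2)/h)$, where $d_j$ denotes the transversal distance to $\gamma_j$. The cut-offs $\chi_{p_0}$ and $\chi_N$ are, by their construction in Lemma \ref{lem_WKB_source} and in \eqref{eq_wkb_w_N_prob}, designed to cover neighbourhoods of \emph{all} the intersection points listed in \eqref{eq_geodesic_cond1}, so on $\operatorname{supp}(1 - \chi_{p_0} - \chi_N) \cap \operatorname{supp}(\chi_1\chi_2)$ at least one of $d_1, d_2$ is uniformly bounded below by a positive constant. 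This produces pointwise $\mathcal{O}(h^\infty)$ decay, and the same in every $H^{k-1}$ seminorm because the $h^{-1}$ factors from differentiating the phase are easily absorbed by the exponential. The delicate geometric point is to arrange the cut-offs so that the uncovered region stays outside any $\sqrt{h}$-tube about either geodesic, uniformly in small $h$. Assembling all five estimates in the energy inequality produces the bound \eqref{eq_w_rem}, and $w \in C^\infty(M_T)$ follows from smoothness of $w$, $\hat w$ and $w_N$.
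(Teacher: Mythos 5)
Your proposal is correct and follows essentially the same route as the paper: set $r := w - \hat w - w_N$, derive the wave equation it satisfies with zero Cauchy data, and close with the linear energy estimate of Proposition \ref{prop_energy_est_lin} together with Lemmas \ref{lem_GB_solves_wave_eq} and \ref{lem_WKB_source}. Your version is in fact somewhat more careful than the paper's displayed equation \eqref{eq_wkb_r_prob}, which suppresses both the leftover term $q(1-\chi_{p_0}-\chi_N)\app{\hat v}{1}\app{\hat v}{2}$ (which you correctly dispose of via the quadratic lower bound \eqref{eq_Phi_cond} on $\operatorname{Im}\Phi$ away from the intersection points) and the $h^{-1}$ losses from differentiating the phase in the mixed products, which your Leibniz-type telescoping handles explicitly.
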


\begin{proof}
The linear theory guarantees the existence of a unique solution $w$ to equation \eqref{eq_wkb_source_prob_GBs}, 
see~\cite[Theorem~13 and Corollary~17]{Bar15}. It remains to prove the estimate of the claim. 
If we set $r = w - \hat w - w_N$, then it follows from \eqref{eq_WKB_source} and Lemma \ref{lem_WKB_source}
that $r$ solves the equations
\begin{equation} \label{eq_wkb_r_prob}
\begin{aligned}
\begin{cases}
(\Box_g + Q) r = \ap{1} \app{r}{2} + \ap{2} \app{r}{1} + \app{r}{1} \app{r}{2} - \hat r,  &\text{ in }\quad M_T, \\ 
r(0,x') = 0,\quad \p_t  r(0,x')=0, &\text{ on } \quad M.
\end{cases}
\end{aligned}
\end{equation}
By the estimates of Proposition \ref{prop_energy_est_lin}, Lemma \ref{lem_GB_solves_wave_eq}, and Lemma \ref{lem_WKB_source},
we have that
\begin{align*}
\| r \|_{H^k(M_T)} 
\leq C \|  \ap{1} \app{r}{2} + \ap{2} \app{r}{1} + \app{r}{1} \app{r}{2} - \hat r\|_{H^{k-1}(M_T)} 
= \mathcal O ( h^{\frac{K+1}{2} + \frac{3}{4} - k} )
\end{align*}
\end{proof}

\begin{remark} We will apply Proposition \ref{prop_WKB_source_GBs} when $\app{v}{1}$ and $\app{v}{2}$
are given by some compactly supported source term as in \eqref{eq_GB_with_source}. The
source terms will however be supported on disjoint sets, so that Proposition
\ref{prop_WKB_source_GBs} is applicable.
\end{remark}

\subsection{A Higher order WKB interaction solution}\label{sec_high_WKB_interaction}

We shall also need another type interaction solution $\app{w}{ijk}$ that solves 
$$
(\square_g + Q ) \app{w}{ijk} = q \big( \app{v}{i}\app{w}{jk} + \app{v}{j}\app{w}{ik} + \app{v}{k} \app{w}{ij}\big),
$$
where the indices $i,j$ and $k$ are distinct, and
$\app{v}{i}$ are Gaussian beams as in \eqref{eq_GB_eq}, and $\app{w}{jk}$ are  WKB interaction solutions as in \eqref{eq_wkb_source_prob_GBs},
and $q,Q \in C^\infty(N)$. 
The solutions  $\app{v}{i}$ and $\app{w}{jk}$  correspond to the distinct lightlike geodesics $ \app{\gamma}{i}$, $ \app{\gamma}{j}$,  
and $ \app{\gamma}{k}$ respectively.
First we will construct a formal part $\app{\hat w}{ijk}$ which has a
source term $F$ of the form 
\begin{equation} \label{eq_WKB_source_w_v}
\begin{aligned}
F =  q \big( \app{\hat v}{i}\app{\hat w}{jk} + \app{\hat v}{j}\app{\hat w}{ik} + \app{\hat v}{k} \app{\hat w}{ij}\big)
=  e^{i \app{\Phi}{ijk}/h} \sum_{m=2}^{3K+2} q\beta_m h^m, 
\end{aligned}
\end{equation}
where 
\begin{equation*} 
\begin{aligned}
\app{\hat v}{i}  =  e^{i \app{\Phi}{i} /h} \chi_i \sum_{m=0}^{K} \ap{i}_m h^m, \quad 
\app{\hat w}{jk}  =  e^{i \app{\Phi}{jk} /h} \chi_{p_0} \sum_{m=2}^{2K+2} \ap{jk}_m h^m, 
\end{aligned}
\end{equation*}
where $i,j,k = 1,2,3$ are distinct, $\chi_{p_0}$ is as in \eqref{eq_WKB_source}
and where  $\app{\Phi}{ijk} = \app{\Phi}{i} + \app{\Phi}{jk}$, which can
be expanded as 
\begin{equation} \label{eq_Phi_ijk}
\begin{aligned}
\app{\Phi}{ijk}   %
= \app{\Phi}{i} + \app{\Phi}{j} + \app{\Phi}{k}.
\end{aligned}
\end{equation}
Note also that the lowest order term of the source has a $\beta_2$ term of the form
\begin{equation} \label{eq_beta_2}
\begin{aligned}
\beta_2 = \app{a}{i}_0 \app{a}{jk}_0 +  \app{a}{j}_0 \app{a}{ik}_0 +  \app{a}{k}_0 \app{a}{ij}_0, 
\end{aligned}
\end{equation}
where $ \app{a}{ij}$ is given by \eqref{eq_c2c3_def}.
As in \eqref{eq_Phi_nonzero}  we will need the following condition on the phase function 
\begin{equation} \label{eq_nabla_Phi_nonzero_2}
\begin{aligned}
\langle \nabla  \app{\Phi}{ijk}, \nabla \app{\Phi}{ijk} \rangle_g (p) 
\neq 0, \quad \text{ for } \quad p \in N_0,
\end{aligned}
\end{equation}
where $N_0$ is a neighbourhood of $p_0$.

The solution $\app{w}{ijk}$ is constructed following similar steps as  section \ref{sec_WKB_interaction}, i.e.
we build $\app{w}{ijk}$ out of three parts 
\begin{equation} \label{eq_wkb_2_interaction_form}
\begin{aligned}
\app{w}{ijk} = \app{\hat w}{ijk} + \app{w}{ijk}_N + \app{r}{ijk},
\end{aligned}
\end{equation}
here $\app{\hat w}{ijk}$ will be a formal part supported in a neighbourhood $N_0$ of $p_0$ 
and $r$ is a remainder term small in $h$, and $\app{w}{ijk}_N$ is a part due to other
possible intersections points $p_1,\dots,p_N$ in \eqref{eq_geodesic_cond1}, of the geodesics $ \app{\gamma}{i}$, $ \app{\gamma}{j}$,  
and $ \app{\gamma}{k}$ besides $p_0$.

Note that we do not have a counter part to Lemma \ref{lem_lightlike_independence}
in the case of $\app{w}{ijk}$. 
We need thus to be careful that $|\nabla  \app{\Phi}{ijk}|_g \neq 0$, so that \eqref{eq_nabla_Phi_nonzero_2} holds
when we construct the formal part $\app{\hat w}{ijk}$. The decomposition in \eqref{eq_wkb_2_interaction_form}
is made so that we need to worry about the condition \eqref{eq_nabla_Phi_nonzero_2} only in the neighbourhood $N_0$ near $p_0$,
since it is hard to guarantee that this conditions holds near the other intersections points.

The formal part $\app{\hat w}{ijk}$ is given by the following lemma.

\begin{lemma} \label{lem_WKB_source_w_v}
Assume that $Q \in C^\infty(N)$, and
let $F$ be of the form \eqref{eq_WKB_source_w_v}, and assume that  \eqref{eq_nabla_Phi_nonzero_2} holds 
in the neighbourhood $N_0$ of $p_0$. 
Then there exists a solution $\hat \omega \in C^\infty(M_T)$ 
to the equation
\begin{equation} \label{eq_wkb_source_prob_2}
(\Box_g + Q ) \hat \omega =  F + \hat r, \quad  \text{ in } \quad   M_T,
\end{equation}
Where  $\hat \omega$ can be written in the form
\begin{equation} \label{eq_thetaj}
\begin{aligned}
\hat  \omega = e^{i\app{\Phi}{ijk}/h}  \sum_{j = 4}^{3K+4} \theta_j h^j, \quad \text{ in } \quad M_T,
\end{aligned}
\end{equation}
and where the $\theta_j \in C_0^\infty(N_0)$ are given by \eqref{eq_thetak} and where the error term $\hat r \in C^\infty_0(N_0)$ 
satisfies the estimate
$$
\|  \hat r \|_{H^k( M_T )} \leq C  h^{3K-k+1}. 
$$
\end{lemma}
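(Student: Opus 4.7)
The plan is to mirror the proof of Lemma \ref{lem_WKB_source}: make the WKB ansatz $\hat\omega = e^{i\Phi^{(ijk)}/h}\sum_{j=4}^{3K+4}\theta_j h^j$, apply $(\Box_g+Q)$, and determine the $\theta_j$'s recursively by equating like powers of $h$ with the source expansion \eqref{eq_WKB_source_w_v}. The lower index $j=4$ is forced because the source starts at order $h^2$: writing
\[
(\Box_g+Q)\hat\omega
= e^{i\Phi^{(ijk)}/h}\!\sum_{j}\!\Bigl[h^j(\Box_g+Q)\theta_j + ih^{j-1}\bigl(\Box_g\Phi^{(ijk)}\theta_j + 2\langle\nabla\Phi^{(ijk)},\nabla\theta_j\rangle_g\bigr) - h^{j-2}\langle\nabla\Phi^{(ijk)},\nabla\Phi^{(ijk)}\rangle_g\theta_j\Bigr],
\]
the leading $h^{j-2}$ factor shows that we must set $\theta_0=\theta_1=\theta_2=\theta_3=0$ so that the lowest order on the left is $h^2$, and similarly $\theta_{3K+5}=\theta_{3K+6}=0$ to cap the expansion.

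After reindexing, the coefficient of $h^m$ on the left-hand side is
\[
(\Box_g+Q)\theta_m + i\Box_g\Phi^{(ijk)}\theta_{m+1} + 2i\langle\nabla\Phi^{(ijk)},\nabla\theta_{m+1}\rangle_g - \langle\nabla\Phi^{(ijk)},\nabla\Phi^{(ijk)}\rangle_g\,\theta_{m+2},
\]
and setting this equal to $q\beta_m$ for $m=2,\dots,3K+2$ yields the recursion
\[
\theta_{m+2} = \frac{q\beta_m - (\Box_g+Q)\theta_m - i\Box_g\Phi^{(ijk)}\theta_{m+1} - 2i\langle\nabla\Phi^{(ijk)},\nabla\theta_{m+1}\rangle_g}{\langle\nabla\Phi^{(ijk)},\nabla\Phi^{(ijk)}\rangle_g},
\]
analogous to \eqref{eq_ck}. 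This determines $\theta_4,\theta_5,\dots,\theta_{3K+4}$ uniquely. Since $\beta_m$ and $\chi_{p_0}$ are supported in $N_0$, an induction shows $\theta_j\in C_0^\infty(N_0)$.

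The error term $\hat r$ then consists exactly of the leftover contributions at the two highest orders $h^{3K+3}$ and $h^{3K+4}$, where the recursion could no longer be closed (since $\theta_{3K+5}=\theta_{3K+6}=0$): explicitly,
\[
\hat r = e^{i\Phi^{(ijk)}/h}\Bigl[h^{3K+3}\bigl((\Box_g+Q)\theta_{3K+3} + i\Box_g\Phi^{(ijk)}\theta_{3K+4} + 2i\langle\nabla\Phi^{(ijk)},\nabla\theta_{3K+4}\rangle_g\bigr) + h^{3K+4}(\Box_g+Q)\theta_{3K+4}\Bigr],
\]
which is supported in $N_0$. The Sobolev estimate then follows from the elementary bound $\|h^j e^{i\Phi^{(ijk)}/h}\varphi\|_{H^k(M_T)}\le C h^{j-k}\|\varphi\|_{C^k}$, combined with the boundedness of $\operatorname{Im}\Phi^{(ijk)}\ge 0$ on $N_0$, yielding the claimed $h^{3K-k+1}$ bound (with room to spare).

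The main subtlety, as already flagged in the paragraph preceding the lemma, is the role of hypothesis \eqref{eq_nabla_Phi_nonzero_2}: in Lemma \ref{lem_WKB_source} the non-vanishing of $\langle\nabla\Phi,\nabla\Phi\rangle_g$ near $p_0$ was automatic via Lemma \ref{lem_lightlike_independence}, since any two linearly independent lightlike covectors have nonzero pairing. For three lightlike covectors $\nabla\Phi^{(i)},\nabla\Phi^{(j)},\nabla\Phi^{(k)}$, the sum $\nabla\Phi^{(ijk)}$ can easily become lightlike, so the non-degeneracy must be imposed as a hypothesis rather than deduced. This is the one essential geometric assumption entering the proof; once it is granted, the algebraic recursion and Sobolev estimates proceed exactly as in Lemma \ref{lem_WKB_source}.
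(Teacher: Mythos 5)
Your proposal is correct and follows essentially the same route as the paper: the same WKB ansatz, the same recursion obtained by equating powers of $h$ (forcing $\theta_2=\theta_3=0$ since the source starts at order $h^2$), the same identification of $\hat r$ with the uncancelled top-order terms, and the same observation that \eqref{eq_nabla_Phi_nonzero_2} must be assumed because Lemma \ref{lem_lightlike_independence} has no analogue for a sum of three lightlike gradients. Your indexing of the leftover terms at orders $h^{3K+3}$ and $h^{3K+4}$ is in fact the consistent completion of the recursion (the paper's displayed remainder reuses the indices from the two-beam case), and it yields the stated bound with margin.
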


\begin{proof}
Our aim is to find a series  $\hat \omega = e^{i \Phi/h}  \sum_{j = 4}^{3K+4} \theta_j h^j$,
such that
\begin{equation} \label{eq_hat_w_2}
\begin{aligned}
(\square_g + Q)  \hat \omega   
=  e^{i \Phi /h} \sum_{m=2}^{3K+2} q \beta_m h^m 
+ \hat r. %
\end{aligned}
\end{equation}
We will proceed by finding the coefficients $ \theta_j$ recursively by equating powers of $h$.
Firstly note that
\begin{small}
\begin{align*}
(\square_g +Q)  \hat\omega 
&=
e^{i\Phi/h} \sum_{j = 4}^{3K+4}
\big[
h^j  (\square_g+Q)  \theta_j  
+ i h^{j-1}(\square_g \Phi  \theta_j + 2 \langle \nabla \Phi , \nabla  \theta_j  \rangle_g) 
- h^{j-2}\langle  \nabla \Phi ,\nabla \Phi  \rangle_g  \theta_j \big].
\end{align*}
\end{small}
Re indexing by $k=j-2$ and rearranging the terms in powers of $h$ gives
\begin{small}
\begin{align*}
(\square_g  + Q ) \hat\omega 
&=
e^{i\Phi/h}  \sum_{k=2}^{3K+2} h^k \big[
(\square_g + Q)\theta_k  
+ i\square_g \Phi \theta_{k+1} + 2i \langle \nabla \Phi , \nabla \theta_{k+1}  \rangle_g 
- \langle  \nabla \Phi ,\nabla \Phi  \rangle_g \gamma_{k+2} \big].
\end{align*}
\end{small}
and defining the missing 2nd and 3rd order coefficients as 
\begin{equation} \label{eq_theta2_theta3}
\begin{aligned}
 \theta_2 =  \theta_3 = 0.
\end{aligned}
\end{equation}
Equating like powers on the right of the previous equation and \eqref{eq_hat_w_2}, gives firstly that 
\begin{equation} \label{eq_beta4_beta5}
\begin{aligned}
\theta_4 = \frac{ -q\beta_2 }{ \langle  \nabla \Phi ,\nabla \Phi  \rangle_g }, \qquad 
\theta_5 = \frac{ q\beta_3 - i\square_g \Phi \theta_{4} - 2i \langle \nabla
\Phi , \nabla \theta_{4}  \rangle_g  }{ \langle  \nabla \Phi ,\nabla \Phi
\rangle_g }.
\end{aligned}
\end{equation}
More generally we have with \eqref{eq_theta2_theta3}, that
\begin{equation} \label{eq_thetak}
\begin{aligned} 
 \theta_j = q \theta_j =  \frac{ q\beta_{j-2} - (\square_g + Q) \theta_{j-2} 
- i\square_g \Phi \theta_{j-1} - 2i \langle \nabla \Phi , \nabla \theta_{j-1}  \rangle_g  }
{ \langle  \nabla \Phi ,\nabla \Phi  \rangle_g }, \quad j=4,..,3K + 2.
\end{aligned}
\end{equation}
These equations define the coefficients $\theta_j$ in terms of $\beta_0,\dots,\beta_{3K}$ in
a recursive manner, and this determines  $\hat w$. 
Moreover we have that $\theta_j \in C^\infty_0(N_0)$,
because $\beta_j \in C^\infty_0(N_0)$.

Note that the above construction results also in an error $\hat r$, which consists
of the leftover terms of $(\Box_g + Q)\hat \omega$ 
that are not included in the definition of any of the $\theta_4,..,\theta_{3K+2}$.
These are 
\begin{equation} \label{eq_check_r_2}
\begin{aligned}
\hat r 
&:=
\big[ (\Box_g + Q) (\theta_{3K+2} h^{3K+2} +\theta_{3K+1}h^{3K+1} ) \\
&\quad - i\square_g \Phi \theta_{3K+2} h^{3K+2} 
- 2i \langle \nabla \Phi , \nabla \theta_{3K+2}  \rangle_g h^{3K+2} \big] e^{i\Phi/h}.
\end{aligned}
\end{equation}
For these we have the estimate 
$$
\| \hat r \|_{H^k( M_T)} \leq C  h^{3K-k+1}. 
$$
\end{proof}

\noindent
The second part $\app{w}{ijk}_N$ in \eqref{eq_wkb_2_interaction_form} that is due to the  
additional intersection points $ \{  p_1,\dots,p_N\}$
is constructed as follows. We let $\app{w}{ij}_N$ be the solution of
\begin{equation} \label{eq_wkb_w_N_prob_2}
\begin{aligned}
\begin{cases}
(\square_g + Q) \app{w}{ijk}_N = F_N,  &\text{ in } \quad M_T, \\
\app{w}{ijk}_N(0,x') = 0,\quad \p_t  \app{w}{ijk}_N(0,x')=0, &\text{ on } \quad M.
\end{cases}
\end{aligned}
\end{equation}
and where
$$
F_N =  q \chi_{N} q ( \app{\hat v}{i}\app{\hat w}{jk} + \app{\hat v}{j}\app{\hat w}{ik} + \app{\hat v}{k} \app{\hat w}{ij})
$$
and where $\chi_N$ is a cut-off function supported in the neighbourhood of the additional intersection
points $p_1,\dots,p_N$
and away from $p_0$.
The existence and uniqueness of a solution \eqref{eq_wkb_w_N_prob_2} is guaranteed by 
the linear theory of the wave equation on Lorentzian manifolds 
see~\cite[Theorem~13 and Corollary~17]{Bar15}.  

Finally we specify the remainder term  $\app{r}{ijk}$ in \eqref{eq_wkb_2_interaction_form}.
This is given by the next lemma.

\begin{proposition} \label{prop_WKB_source_GBs_2}
Let  $Q \in C^\infty(M_T)$. Then there exists a  $\omega = \hat \omega + \omega_N + r \in C^\infty(M_T)$, where 
$\hat \omega$ is given by Lemma \ref{lem_WKB_source_w_v}, $\omega_N$ is given by \eqref{eq_wkb_w_N_prob_2},
and $r \in C^\infty(M_T)$, and $w$ solves the equation 
\begin{equation} \label{eq_wkb_source_prob_GBs_2}
\begin{aligned}
\begin{cases}
(\square_g + Q) \omega = q \app{v}{i} \app{w}{jk},  &\text{ in } \quad M_T, \\
\omega(0,x') = 0,\quad \p_t  \omega(0,x')=0, &\text{ on } \quad M.
\end{cases}
\end{aligned}
\end{equation}
Furthermore we have the estimate
\begin{equation} \label{eq_w_rem_2}
\begin{aligned}
\| r \|_{H^k(M_T)} &= \mathcal O ( h^{\frac{K+1}{2} + \frac{3}{4} - k}), 
\end{aligned}
\end{equation}
\end{proposition}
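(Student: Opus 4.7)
The plan is to mirror the proof of Proposition~\ref{prop_WKB_source_GBs} essentially verbatim, adjusting only for the increased complexity introduced by the higher-order WKB interaction $\hat\omega$. First, I would invoke the linear theory of wave equations on globally hyperbolic Lorentzian manifolds (\cite[Theorem~13 and Corollary~17]{Bar15}) to obtain a unique solution $\omega \in C^\infty(M_T)$ to \eqref{eq_wkb_source_prob_GBs_2}. The remainder $r$ is then \emph{defined} by $r := \omega - \hat\omega - \omega_N$, reducing the proposition to a quantitative estimate on $r$ alone.

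Next, I would derive the equation satisfied by $r$. Using the splitting $v^{(i)} = \hat v^{(i)} + r^{(i)}$ from Lemma~\ref{lem_GB_solves_wave_eq} and the splitting $w^{(jk)} = \hat w^{(jk)} + w_N^{(jk)} + r^{(jk)}$ from Proposition~\ref{prop_WKB_source_GBs}, I expand the source $q\s v^{(i)} w^{(jk)}$ into the nine products these splittings produce, and subtract the sources driving $\hat\omega$ (namely $F + \hat r$ from Lemma~\ref{lem_WKB_source_w_v}) and $\omega_N$ (namely $F_N$ from \eqref{eq_wkb_w_N_prob_2}). With the cutoffs $\chi_{p_0}$ and $\chi_N$ chosen so that $F + F_N$ recovers the full product $q\s\hat v^{(i)} \hat w^{(jk)}$ (up to contributions that are $\O(h^\infty)$ by non-stationary phase, since $\operatorname{Im} \Phi^{(ijk)} > 0$ away from the intersection points), the leading terms cancel and $r$ satisfies an inhomogeneous linear wave equation with zero Cauchy data whose source consists of: (a) mixed products such as $\hat v^{(i)} r^{(jk)}$ and $r^{(i)} \hat w^{(jk)}$, (b) the pure error product $r^{(i)} r^{(jk)}$, and (c) the small residual $\hat r$ from Lemma~\ref{lem_WKB_source_w_v}.

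Finally, I would apply the energy estimate of Proposition~\ref{prop_energy_est_lin} to conclude
\[
\| r \|_{H^k(M_T)} \leq C \,\| (\Box_g + Q) r \|_{H^{k-1}(M_T)},
\]
and bound each piece on the right using Sobolev multiplication, the uniform $L^\infty$ bounds on the formal beams $\hat v^{(i)}$ and $\hat w^{(jk)}$, and the error estimates
\[
\| r^{(i)} \|_{H^k} = \O(h^{\frac{K+1}{2} - \frac{1}{4} - k}), \quad \| r^{(jk)} \|_{H^k} = \O(h^{\frac{K+1}{2} + \frac{3}{4} - k}), \quad \| \hat r \|_{H^k} = \O(h^{3K+1-k}).
\]
The slowest-decaying contribution comes from cross-products of the form $\hat v^{(i)} r^{(jk)}$, and this produces exactly the claimed rate $\O(h^{\frac{K+1}{2} + \frac{3}{4} - k})$; the remaining terms decay strictly faster.

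The main obstacle I anticipate is the bookkeeping for the cutoffs. Since $\hat w^{(jk)}$ is itself only defined as a formal expansion near $p_0$ (and not globally, as emphasized in the remark in Section~\ref{sec_WKB_interaction}), and since $F_N$ is built from a different cutoff $\chi_N$ concentrated near $p_1,\ldots,p_N$, one must verify that the product $q\s\hat v^{(i)}\hat w^{(jk)}$ is captured by $F + F_N$ modulo a term that is non-stationary-phase-negligible. Concretely, this amounts to arranging $\chi_{p_0} + \chi_N \equiv 1$ on the relevant support, or else absorbing the leftover into the $\O(h^\infty)$ tail via the positivity of $\operatorname{Im}\Phi^{(ijk)}$ off the intersection set; once this is done, the energy estimate step is routine.
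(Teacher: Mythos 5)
Your proposal follows essentially the same route as the paper's proof: obtain $\omega$ from the linear theory, define $r:=\omega-\hat\omega-\omega_N$, derive its wave equation with zero Cauchy data whose source consists of the cross terms $\hat v^{(i)}r^{(jk)}$, $r^{(i)}\hat w^{(jk)}$, the product $r^{(i)}r^{(jk)}$ and the residual $\hat r$, and then apply the energy estimate of Proposition~\ref{prop_energy_est_lin} together with the remainder bounds from Lemma~\ref{lem_GB_solves_wave_eq}, Proposition~\ref{prop_WKB_source_GBs} and Lemma~\ref{lem_WKB_source_w_v}. Your additional attention to the cutoff bookkeeping (matching $\chi_{p_0}$ and $\chi_N$ so that the formal sources account for the full product) is a point the paper's proof passes over silently, but it does not change the argument.
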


\begin{proof}
The linear theory guarantees the existence of a unique solution $\omega$ to equation \eqref{eq_wkb_source_prob_GBs_2}, 
see~\cite[Theorem~13 and Corollary~17]{Bar15}. It remains to prove the estimate of the claim. 
If we set $r = \omega - \hat \omega - \omega_{N}$, then it follows from \eqref{eq_WKB_source_w_v} and Lemma \ref{lem_WKB_source_w_v}
that $r$ solves the equations
\begin{equation} \label{eq_wkb_r_prob_2}
\begin{aligned}
\begin{cases}
(\Box_g + Q) r = q e^{i \app{\Phi}{ijk}/h} (\ap{1} \app{r}{23} + \ap{23} \app{r}{1} + \app{r}{1} \app{r}{23}) - \hat r,  &\text{ in }\quad M_T, \\ 
r(0,x') = 0,\quad \p_t  r(0,x')=0, &\text{ on } \quad M.
\end{cases}
\end{aligned}
\end{equation}
By the estimates of Proposition \ref{prop_energy_est_lin}, Lemma \ref{lem_GB_solves_wave_eq}, and Lemma \ref{lem_WKB_source},
we have that
\begin{align*}
\| r \|_{H^k(M_T)} 
\leq C \|  \ap{1} \app{r}{23} + \ap{23} \app{r}{1} + \app{r}{1} \app{r}{23} - \hat r\|_{H^{k-1}(M_T)} 
= \mathcal O ( h^{\frac{K+1}{2} + \frac{3}{4} - k})
\end{align*}
as claimed.
\end{proof}

\section{Higher order linearization}

\noindent
We will use the method of higher order linearization, 
introduced in \cite{LLLS21} to prove Theorem \ref{thm_thm1}.
For this we need an integral identity where the source-to-solution map is linearized using several parameters. 
Because of this we will consider in this section a number of linearized versions of the nonlinear 
equation \eqref{eq_wave_equation}.

We start by defining $u$ as a solution to
\begin{equation} \label{eq_u0_equation}
\begin{aligned}
\begin{cases}
\square_g u + q_1 u + q_2 u^2 =  F + f   ,& \text{ in } \quad M_T, \\
u(0,x') = 0,\quad \p_t  u(0,x')=0, &\text{ on } \quad M.
\end{cases}
\end{aligned}
\end{equation}
where the source that we control is of the form 
\begin{align}\label{eq_sources}
f &= \sum_{i= 1}^3 \epsilon_i \app{f}{i},   %
\end{align}
with $\supp(f) \Subset U$ and $\epsilon_1, \epsilon_2, \epsilon_3 > 0$.
First we set 
\begin{equation} \label{eq_u0}
\begin{aligned}
u_0 = u|_{\epsilon = 0},
\end{aligned}
\end{equation}
where  $\epsilon := (\epsilon_1, \epsilon_2, \epsilon_3)$.
The first linearizations of equation	\eqref{eq_wave_equation}  are given by 
$$
\app{v}{i} := \p_{\epsilon_i} u |_{\epsilon = 0}
$$
and solve the equations
\begin{equation} \label{eq_vi}
\begin{aligned}
\square_g \app{v}{i} + q_1 \app{v}{i} + 2 q_2 u_0  \app{v}{i} &=    \app{f}{i} , \quad \text{ in } \quad M_T. 
\end{aligned}
\end{equation}
The second linearizations are in turn given by 
$$
\app{w}{ij} := \p^2_{\epsilon_i \epsilon_j} u|_{\epsilon = 0}, \qquad i \neq j,
$$
that correspondingly satisfy the equations
\begin{equation} \label{eq_wij}
\begin{aligned}
\square_g \app{w}{ij} + (q_1 + 2 q_2 u_0) \app{w}{ij}  = - 2q_2 \app{v}{i} \app{v}{j} ,\quad  \text{in } \quad  M_T. 
\end{aligned}
\end{equation}
The third linearizations are given by 
$$
\app{w}{ijk} := \p^3_{\epsilon_i \epsilon_j \epsilon_k } u|_{\epsilon = 0},
$$
and these solve 
\begin{equation} \label{eq_w123}
\begin{aligned}
\square_g \app{w}{ijk} + (q_1 + 2 q_2 u_0)  \app{w}{ijk}  = 
-  2q_2 \big( \app{v}{i} \app{w}{jk} + \app{v}{j} \app{w}{ik} + \app{v}{k} \app{w}{ij} \big), \quad \text{ in } \quad M_T. 
\end{aligned}
\end{equation}
Next we derive an integral identity related to the third linearization.
We will assume that $\app{v}{0}$ is a wave satisfying a backwards wave equation, that is 
\begin{equation} \label{eq_vb}
\begin{aligned}
(\Box_g + q_1)\app{v}{0} + 2q_2u_0 \app{v}{0} = \app{f}{0}, \qquad \supp(\app{v}{0}) \Subset J^{-}(U),
\end{aligned}
\end{equation}
where $\supp(\app{f}{0}) \Subset U$. 
In the following lemma we will use the fact that the source-to-solution map $S$ is Fr\'echet differentiable.
This is verified during the proof of Proposition~\ref{prop: well posedness of nonlinear waves with sources}.

\begin{lemma} \label{lem_iid_3rd}
Let $S$ and $\tilde S$ be the source-to-solution maps corresponding to the coefficients $q_1,q_2,F$ and 
$\tilde q_1, \tilde q_2, \tilde F$ respectively. Furthermore, let $\app{v}{0}$ satisfy \eqref{eq_vb}.  We have the following 
\begin{small}
\begin{equation} \label{eq_iid_1}
\begin{aligned}
\big ( \p^3_{\eps_1 \eps_2 \eps_3} (S - \tilde S)|_{\eps=0} f \, , \app{f}{0}  \big )_{L^2(\Omega_T)} 
&= 
2 \big ( \tilde q_2 ( \app{\tilde v}{1} \app{\tilde w}{23} + \app{\tilde v}{2} \app{\tilde w}{13} + \app{\tilde v}{3} \app{\tilde w}{12}),	
\, \app{\tilde v}{0}   \big )_{L^2(\Omega_T)}  \\
&\quad -
2 \big ( q_2 ( \app{v}{1} \app{w}{23} + \app{v}{2} \app{w}{13} + \app{v}{3} \app{w}{12}), \, \app{v}{0}   \big )_{L^2(\Omega_T)},
\end{aligned}
\end{equation}
\end{small}
where $\app{v}{i}$ and $\app{\tilde v}{i}$ are given by \eqref{eq_vi}, 
and $\app{w}{ij}$ and $\app{\tilde w}{ij}$ are given by \eqref{eq_wij}.
\end{lemma}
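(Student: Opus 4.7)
The plan is to apply the standard higher-order linearization/test-function procedure: differentiate \eqref{eq_u0_equation} three times with respect to $\eps_1, \eps_2, \eps_3$ at $\eps = 0$, test the resulting equation \eqref{eq_w123} against the backwards solution $\app{v}{0}$ over $M_T$, and integrate by parts. Running the analogous computation for the tilded coefficients and subtracting will produce \eqref{eq_iid_1}.

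First I would invoke the Fréchet smoothness of $S$ (cf.\ Proposition~\ref{prop: well posedness of nonlinear waves with sources}) to ensure that $\app{w}{123} := \p^3_{\eps_1 \eps_2 \eps_3} u|_{\eps = 0}$ is well-defined and solves \eqref{eq_w123} with zero Cauchy data; expanding $\p^3_{\eps_1 \eps_2 \eps_3}(q_2 u^2)|_{\eps = 0}$ by the Leibniz rule produces exactly the source $-2 q_2 \bigl(\app{v}{1}\app{w}{23} + \app{v}{2}\app{w}{13} + \app{v}{3}\app{w}{12}\bigr)$ on the right-hand side. I would then multiply \eqref{eq_w123} by $\app{v}{0}$, integrate over $M_T$, and apply Green's identity for the formally self-adjoint operator $\Box_g + q_1 + 2 q_2 u_0$ to transfer it onto $\app{v}{0}$. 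Using \eqref{eq_vb}, this yields
$$- 2 \int_{M_T} q_2 \bigl(\app{v}{1} \app{w}{23} + \app{v}{2} \app{w}{13} + \app{v}{3} \app{w}{12}\bigr) \app{v}{0} = \int_{M_T} \app{w}{123} \app{f}{0} = \int_U \app{w}{123} \app{f}{0},$$
since $\supp(\app{f}{0}) \Subset U$. By the definition of $S$, the last integral equals $\bigl(\p^3_{\eps_1 \eps_2 \eps_3} S|_{\eps = 0} f, \app{f}{0}\bigr)_{L^2(\Omega_T)}$. Repeating the derivation verbatim for the tilded coefficients, with $\app{\tilde v}{0}$ denoting the backwards solution of the tilded linearization of \eqref{eq_vb} with the \emph{same} source $\app{f}{0}$, and subtracting the two identities then delivers \eqref{eq_iid_1}.

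The main technical point to verify is that Green's identity produces no boundary contributions on $\p M_T$. Since $M$ has no boundary, only the hypersurfaces $\{t = 0\}$ and $\{t = T\}$ can contribute. At $t = 0$ the contribution vanishes because $\app{w}{123}$ and $\p_t \app{w}{123}$ are identically zero there; at $t = T$ it vanishes because $\supp(\app{v}{0}) \Subset J^-(U) \subset \{t \le T_2\}$ with $T_2 < T$, forcing $\app{v}{0}$ and $\p_t \app{v}{0}$ to vanish in a neighbourhood of $t = T$. With these support and initial-data constraints in place, all remaining manipulations are routine and no properties of Gaussian beams or of the geometric configuration of geodesics are used at this stage; those enter only when the general $\app{f}{i}$ are later specialised to Gaussian beam sources.
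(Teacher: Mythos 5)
Your proposal is correct and follows essentially the same route as the paper: derive the third linearization \eqref{eq_w123}, pair it with the backwards solution $\app{v}{0}$ from \eqref{eq_vb}, integrate by parts using the zero Cauchy data of $\app{w}{123}$ and the support condition on $\app{v}{0}$ to kill the boundary terms, and subtract the tilded identity. The paper performs the same computation in the opposite direction (starting from $(\app{w}{123},\app{f}{0})_{L^2}$ and unwinding), but the content is identical.
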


\begin{proof}
Using \eqref{eq_w123}, \eqref{eq_vb}, and integration by parts, we have that
\begin{small}
\begin{align*}
\big ( \p^3_{123} S|_{\eps=0} f, \,  \app{f}{0}  \big )_{L^2} 
&= 
\big ( \app{w}{123} , \,  \app{f}{0}  \big )_{L^2(\Omega_T)}  \\
&= 
\big ( \app{w}{123} , \,  (\Box_g + q_1) \app{v}{0} + 2q_2 u_0 \app{v}{0} \big )_{L^2(\Omega_T)}  \\
&= 
-\big ((q_1 + 2 q_2 u_0)  \app{w}{123}  + 2q_2 ( \app{v}{1} \app{w}{23} + \app{v}{2} \app{w}{13} + \app{v}{3} \app{w}{12}), \, \app{v}{0}   \big )_{L^2(\Omega_T)}  \\
&\quad + \big ( \app{w}{123} , \,  q_1 \app{v}{0} + 2q_2 u_0 \app{v}{0} \big )_{L^2(\Omega_T)} 
- \big ( \app{w}{123} , \, \p_\nu \app{v}{0} \big )_{L^2(\p \Omega_T)} \\
&\quad + \big ( \p_\nu \app{w}{123} , \,  \app{v}{0} \big )_{L^2( \p \Omega_T)} \\
&=
-2 \big ( q_2 ( \app{v}{1} \app{w}{23} + \app{v}{2} \app{w}{13} + \app{v}{3} \app{w}{12}), \, \app{v}{0}   \big )_{L^2(\Omega_T)},
\end{align*}
\end{small}
where the boundary terms are zero, since $\app{v}{0}$ satisfies \eqref{eq_vb}, so that 
$$
\supp( \nabla_g \app{w}{123} \app{v}{0}) , \> \supp(\app{w}{123} \nabla_g \app{v}{0}) \subset J^{+}(U) \cap J^{-}(U) \Subset \Omega_T.
$$
Subtracting from the above integral identity for $q_1,q_2$ and $F$  the corresponding expression for $\tilde q_1, \tilde q_2$ and $\tilde F$,
yields the identity 
\begin{small}
\begin{align*}
\big ( \p^3_{123} (S - \tilde S)|_{\eps=0} f \, , \app{f}{0}  \big )_{L^2(\Omega_T)} 
&= 
2 \big ( \tilde q_2 ( \app{\tilde v}{1} \app{\tilde w}{23} + \app{\tilde v}{2} \app{\tilde w}{13} + \app{\tilde v}{3} \app{\tilde w}{12}), 
\, \app{\tilde v}{0}   \big )_{L^2(\Omega_T)} \\ 
&\quad -
2 \big (q_2 ( \app{v}{1} \app{w}{23} + \app{v}{2} \app{w}{13} + \app{v}{3} \app{w}{12}), \, \app{v}{0}   \big )_{L^2(\Omega_T)},
\end{align*}
\end{small}
proving the claim.
\end{proof}

\noindent
We shall furthermore need a fourth order linearization and a corresponding integral identity,
which will be used in section \ref{sec_q2_only}. The source is in  the fourth order case given by
\begin{align}\label{eq_sources_2}
f &= \sum_{i= 1}^4 \epsilon_i \app{f}{i},   %
\end{align}
with $\supp(f) \Subset U$ and $\epsilon_1, \epsilon_2, \epsilon_3, \epsilon_4 > 0$.
The fourth linearization is given by 
$$
\app{w}{1234} := \p^4_{\epsilon_1 \epsilon_2 \epsilon_3 \epsilon_4 } u|_{\epsilon = 0},
$$
where $u$ is given by \eqref{eq_u0_equation}. The first and second linearizations are defined as before,
with the source term given by \eqref{eq_sources_2},
and the third linearizations solves
\begin{equation} \label{eq_wijk}
\begin{aligned}
\square_g \app{w}{ijk} + (q_i + 2 q_2 u_0)  \app{w}{ijk}  = -  2q_2 ( \app{v}{i} \app{w}{jk} + \app{v}{j} \app{w}{ik} + \app{v}{k} \app{w}{ij}), \quad \text{ in } \quad M_T,  
\end{aligned}
\end{equation}
as in \eqref{eq_w123}. The fourth linearization $\app{w}{1234}$ is a  solution of
\begin{equation} \label{eq_w1234}
\begin{aligned}
\square_g \app{w}{1234} + (q_1 + 2 q_2 u_0)  \app{w}{1234}  = 
&-  2q_2 ( \app{w}{14} \app{w}{23} + \app{w}{24} \app{w}{13} + \app{w}{34} \app{w}{12}) \\
&-  2q_2 ( \app{v}{1} \app{w}{234} + \app{v}{2} \app{w}{134} + \app{v}{3} \app{w}{124} + \app{v}{4} \app{w}{123}), %
\end{aligned}
\end{equation}
in $M_T$.
We will again assume that $\app{v}{0}$ is solution moving backward in time that satisfies
\begin{equation} \label{eq_vb2}
\begin{aligned}
(\Box_g + q_1)\app{v}{0} + 2q_2u_0 \app{v}{0} = \app{f}{0}, \qquad \supp(\app{v}{0}) \Subset J^{-}(U),
\end{aligned}
\end{equation}
where $\supp(\app{f}{0}) \Subset U$. 

\begin{lemma} \label{lem_iid_4th}
Let $S$ and $\tilde S$ be the source-to-solution maps corresponding to the coefficients $q_1,q_2,F$ and 
$\tilde q_1, \tilde q_2, \tilde F$ respectively. Furthermore  let $\app{v}{0}$ satisfy \eqref{eq_vb}.  We have the following 
\begin{small}
\begin{equation} \label{eq_iid_2}
\begin{aligned}
\big ( \p^4_{\eps_1 \eps_2 \eps_3 \eps_4} (S - \tilde S)|_{\eps=0} f \, , \app{f}{0}  \big )_{L^2} 
&= 
\Big ( 2\tilde q_2 ( \app{\tilde w}{14} \app{\tilde w}{23} + \app{\tilde w}{24} \app{\tilde w}{13} + \app{\tilde w}{34} \app{\tilde w}{12}\\
&\quad+ \app{\tilde v}{1} \app{\tilde w}{234} + \app{\tilde v}{2} \app{\tilde w}{134} + \app{\tilde v}{3} \app{\tilde w}{124}+ \app{\tilde v}{4} \app{\tilde w}{123}) ,
\, \app{\tilde v}{0}   \Big )_{L^2(\Omega_T)}  \\
&\quad- 2 q_2 \Big ( \app{w}{14}  \app{w}{23} +  \app{w}{24}  \app{w}{13} +  \app{w}{34}  \app{w}{12}\\
&\quad+  \app{v}{1}  \app{w}{234} +  \app{v}{2}  \app{w}{134} +  \app{v}{3}  \app{w}{124} + \app{v}{4} \app{w}{123}) , \, \app{v}{0}   \Big )_{L^2(\Omega_T)}  
\end{aligned}
\end{equation}
\end{small}
where $\app{v}{i}$ and $\app{\tilde v}{i}$ are given by \eqref{eq_vi}, 
and $\app{w}{ij}$ and $\app{\tilde w}{ij}$ are given by \eqref{eq_wij},
and $\app{w}{ijk}$ and $\app{\tilde w}{ijk}$ are given by \eqref{eq_wijk}.
\end{lemma}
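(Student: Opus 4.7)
The plan is to mirror the proof of Lemma~\ref{lem_iid_3rd} but carry the Fréchet differentiation one step further. Concretely, I will differentiate \eqref{eq_u0_equation} four times in $\epsilon_1,\dots,\epsilon_4$ at $\epsilon = 0$, pair the resulting equation for $w^{(1234)}$ with the backward wave $v^{(0)}$, and use the self-adjointness of the operator $\Box_g + q_1 + 2q_2 u_0$ together with the compact causal support of $v^{(0)}$ to eliminate the boundary terms.

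First I would observe that $w^{(1234)} = \partial^4_{\epsilon_1\epsilon_2\epsilon_3\epsilon_4} u|_{\epsilon=0}$ is well defined as a Fréchet derivative of $S$ (justified in Proposition~\ref{prop: well posedness of nonlinear waves with sources}), and that it satisfies \eqref{eq_w1234}. This equation is obtained by applying $\partial^4_{\epsilon_1\epsilon_2\epsilon_3\epsilon_4}|_{\epsilon=0}$ to $\square_g u + q_1 u + q_2 u^2 = F+f$: the linear part produces $(\square_g + q_1)w^{(1234)}$, while $\partial^4_{\epsilon_1\epsilon_2\epsilon_3\epsilon_4}|_{\epsilon=0}(q_2 u^2)$ expands by the generalized Leibniz rule into the terms $2q_2 u_0 w^{(1234)}$, plus the cross terms involving products $v^{(i)} w^{(jkl)}$ and $w^{(ij)} w^{(kl)}$ listed on the right of \eqref{eq_w1234}. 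A combinatorial check of the multinomial coefficient $\binom{4}{1,1,1,1}$ and the partitions $\{i,jkl\}$ and $\{ij,kl\}$ of $\{1,2,3,4\}$ gives exactly the factors $2q_2$ appearing there.

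Next I would test \eqref{eq_w1234} against $v^{(0)}$. Since $f^{(0)} = (\Box_g + q_1)v^{(0)} + 2q_2 u_0 v^{(0)}$ by \eqref{eq_vb2}, we have
\begin{align*}
\bigl(\partial^4_{\epsilon_1\epsilon_2\epsilon_3\epsilon_4} S|_{\epsilon=0} f , f^{(0)}\bigr)_{L^2(\Omega_T)}
&= \bigl(w^{(1234)}, (\Box_g + q_1 + 2q_2 u_0) v^{(0)}\bigr)_{L^2(\Omega_T)}.
\end{align*}
Now I would integrate by parts twice. The operator $\Box_g + q_1 + 2q_2 u_0$ is formally self-adjoint, so moving it onto $w^{(1234)}$ produces the right-hand side of \eqref{eq_w1234} paired with $v^{(0)}$, up to boundary terms. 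As in the proof of Lemma~\ref{lem_iid_3rd}, the boundary terms vanish because $\supp(v^{(0)}) \Subset J^-(U)$ and $\supp(w^{(1234)}) \subset J^+(U)$, so the intersection is compactly contained in $\Omega_T$. This yields the single-coefficient identity
\begin{align*}
\bigl(\partial^4_{\epsilon_1\epsilon_2\epsilon_3\epsilon_4} S|_{\epsilon=0} f , f^{(0)}\bigr)_{L^2(\Omega_T)}
&= -2\bigl(q_2(w^{(14)}w^{(23)} + w^{(24)}w^{(13)} + w^{(34)}w^{(12)}) \\
&\qquad + q_2(v^{(1)}w^{(234)} + v^{(2)}w^{(134)} + v^{(3)}w^{(124)} + v^{(4)}w^{(123)}), v^{(0)}\bigr)_{L^2(\Omega_T)}.
\end{align*}
Finally, subtracting the analogous identity for the tilde system, where $\widetilde v^{(0)}$ is chosen to solve \eqref{eq_vb2} with the tilde coefficients against the same source $f^{(0)}$, gives \eqref{eq_iid_2}.

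The only slightly subtle point is the justification that the boundary terms really are compactly supported inside $\Omega_T$; this is where the hypothesis $\supp(v^{(0)}) \Subset J^-(U)$ and the finite speed of propagation for $w^{(1234)}$ (implying $\supp(w^{(1234)}) \subset J^+(\supp(f)) \subset J^+(U)$) combine to localize the product in $D(U) \Subset \Omega_T$. Beyond that, the argument is a direct Leibniz-rule and integration-by-parts computation, fully parallel to Lemma~\ref{lem_iid_3rd}.
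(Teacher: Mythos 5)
Your proposal is correct and follows essentially the same route as the paper: test $w^{(1234)}$ against $f^{(0)} = (\Box_g + q_1 + 2q_2 u_0)v^{(0)}$, integrate by parts using the equation \eqref{eq_w1234} and the support conditions to kill the boundary terms, then subtract the tilde identity. The extra Leibniz-rule justification of \eqref{eq_w1234} is a harmless addition that the paper states without proof in the preceding discussion.
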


\begin{proof}
Using \eqref{eq_w1234}, \eqref{eq_vb2}, and integration by parts, we have that
\begin{align*}
\big ( \p^3_{1234} S|_{\eps=0} f, \,  \app{f}{0}  \big )_{L^2(U)} 
&= 
\big ( \app{w}{1234} , \,  \app{f}{0}  \big )_{L^2(\Omega_T)}  \\
&= 
\big ( \app{w}{1234} , \,  (\Box_g + q_1) \app{v}{0} + 2q_2 u_0 \app{v}{0} \big )_{L^2(\Omega_T)}  \\
&= 
-\big ((q_1 + 2 q_2 u_0)  \app{w}{1234}  , \, \app{v}{0}   \big )_{L^2(\Omega_T)}  \\
&\quad -\big ( 2q_2 ( \app{w}{14} \app{w}{23} + \app{w}{24} \app{w}{13} + \app{w}{34} \app{w}{12}), \, \app{v}{0}   \big )_{L^2(\Omega_T)}  \\
&\quad -\big ( 2q_2 ( \app{v}{1} \app{w}{234} + \app{v}{2} \app{w}{134} + \app{v}{3} \app{w}{124} + \app{v}{4} \app{w}{123}), \, \app{v}{0}   \big )_{L^2(\Omega_T)}  \\
&\quad + \big ( \app{w}{123} , \,  q_1 \app{v}{0} + 2q_2 u_0 \app{v}{0} \big )_{L^2(\Omega_T)} \\
&\quad - \big ( \app{w}{1234} , \, \p_\nu \app{v}{0} \big )_{L^2(\p \Omega_T)} \\
&\quad + \big ( \p_\nu \app{w}{1234} , \,  \app{v}{0} \big )_{L^2( \p \Omega_T)} \\
&=
-\big ( 2q_2 ( \app{w}{14} \app{w}{23} + \app{w}{24} \app{w}{13} + \app{w}{34} \app{w}{12}), \, \app{v}{0}   \big )_{L^2(\Omega_T)}  \\
&\quad -\big ( 2q_2 ( \app{v}{1} \app{w}{234} + \app{v}{2} \app{w}{134} + \app{v}{3} \app{w}{124} + \app{v}{4} \app{w}{123}), \, \app{v}{0}   \big )_{L^2(\Omega_T)}  
\end{align*}
where the boundary terms are zero, since $\app{v}{0}$ moves backward in time and thus satisfies \eqref{eq_vb}, so that 
$$
\supp( \nabla_g \app{w}{123} \app{v}{0}) , \> \supp(\app{w}{123} \nabla_g \app{v}{0}) \subset J^{+}(U) \cap J^{-}(U) \Subset \Omega_T.
$$
Subtracting from the above integral identity for $q_1,q_2$ and $F$  the corresponding expression for $\tilde q_1, \tilde q_2$ and $\tilde F$,
gives the identity 
\begin{small}
\begin{align*}
\big ( \p^4_{1234} (S - \tilde S)|_{\eps=0} f \, , \app{f}{0}  \big )_{L^2} 
&= 
\Big ( 2\tilde q_2 ( \app{\tilde w}{14} \app{\tilde w}{23} + \app{\tilde w}{24} \app{\tilde w}{13} + \app{\tilde w}{34} \app{\tilde w}{12}\\
&\quad\quad+ \app{\tilde v}{1} \app{\tilde w}{234} + \app{\tilde v}{2} \app{\tilde w}{134} + \app{\tilde v}{3} \app{\tilde w}{124}) + \app{\tilde v}{4} \app{\tilde w}{123}, \, 
\app{\tilde v}{0}   \Big )_{L^2(\Omega_T)}  \\
&-
\Big ( 2 q_2 (  \app{w}{14}  \app{w}{23} +  \app{w}{24}  \app{w}{13} +  \app{w}{34}  \app{w}{12}\\
&\quad\quad+  \app{v}{1}  \app{w}{234} +  \app{v}{2}  \app{w}{134} +  \app{v}{3}  \app{w}{124}) +  \app{v}{4}  \app{w}{123}, \, \app{v}{0}   \Big )_{L^2(\Omega_T)}  
\end{align*}
\end{small}
which proves the claim.
\end{proof}

\section{Proof of Theorem \ref{thm_thm1}} \label{sec_prf_thm1}

In the following subsections we will prove Theorem \ref{thm_thm1}.
Theorem \ref{thm_thm1} follows directly from Propositions \ref{prop_q32}, \ref{prop_q1} and
\ref{prop_F}.
We begin with some geometric preliminaries and the choosing of suitable geodesics in subsection \ref{sec_geom_setup}.
In subsection \ref{sec_q_2}  we illustrate the method of using 
Gaussian beams and  WKB interaction solutions, by proving that $q^2_2 = \tilde q_2^2$.
Then in subsection  \ref{sec_q_2} we refine this approach to show that $q_2 = \tilde q_2$.
Finally in subsection \ref{sec_q_1} we prove the claims concerning $q_1$ and $F$.

\subsection{Geometric setup} \label{sec_geom_setup} %

\noindent
In order to use the Gaussian beams and WKB interaction solutions discussed in the earlier sections,
we need to choose appropriate lightlike geodesics. These geodesics will be such that they intersect
at the point 
$$
p_0 \in D \setminus U,
$$
which will act as a common intersection point of all the geodesics we define below. See figure \ref{fig_geom_setup}.
It will  be important that the geodesics do not have certain types of other intersection points.
The absence of these intersection points is of especial importance  when we use the higher 
order WKB interaction solutions, that we defined  in section \ref{sec_high_WKB_interaction}.
The main  main task in this section is therefore to choose suitable geodesics 
with the desired properties.

\medskip
\noindent
Towards this end 
we define the time separation function $\tau : N \times N \to \R$,
which is for $q \in J^+(p)$ given by
\begin{small}
\begin{equation} \label{eq_tau}
\begin{aligned}
\tau(p,q) := \sup \{ L(\alpha) \,:\, \alpha:[0,1] \to N \text{ is a future pointing causal curve from $p$ to $q$} \},
\end{aligned}
\end{equation}
\end{small}
and for $p \not \in J^+(p) $, we set $\tau(p,q) = 0$.
See also Definition 14.15  in \cite{On83}. Here $L$ is the length of the segment $\alpha$, given by 
$$
L(\alpha) := \int_0^1 \sqrt{ -\langle \dot \alpha(s), \dot \alpha(s)  \rangle_g } ds,
$$
and $\alpha(0)= p$ and $\alpha(1)= q$.
For a point $p_0 \in D \setminus U$, we will choose points $q_0,q_1 \in U$,
and corresponding geodesics $\app{\gamma}{0}$ and $\app{\gamma}{1}$ in an optimal sense as follows.
By the definition of $D$, there exists a future pointing causal curve from
$p_0$ to a point $\tilde q_0 \in U$. Let $\tilde q_0 = (\tilde t, \tilde x)$.
The earliest observation time of a signal originating at the event $p_0$, and arriving at the 
spatial location $\tilde x$, is given by 
$$
t_0 = \inf \{ s \in [0,T] \,:\, \tau \big(p_0, (s,\tilde x)  \big) > 0 \},
$$
where $\tau$ is the time separation function given by  \eqref{eq_tau}.
We now let $q_0 := (t_0,\tilde x) \in U$. By definition we have that $\tau(p_0,q_0) = 0$.
If two distinct points have a time separation of zero, then they are connected by a lightlike geodesic, see
Theorem 10.46 in \cite{On83}. There is thus a future pointing lightlike geodesic $\app{\gamma}{0}$  from
$p_0$ to $q_0$

Next we pick $q_1$ and $\app{\gamma}{1}$ in an analogous manner.
By the definition of $D$, there exists a past pointing causal curve from
$p_0$ to a point $\tilde q_1 \in U$. Let $\tilde q_1 = (\tilde t, \tilde x)$.
Consider the earliest time $t_1$
at which a signal sent from  the spatial location $\tilde x$,
can be observed at the event $p_0$, this is given by 
$$
t_1 = \sup \{ s \in [0,T] \,:\, \tau((s,\tilde x), p_0) > 0  \},
$$
We now set $q_1 := (t_1, \tilde x) \in U$.
We pick the geodesic $\app{\gamma}{1}$, similarly as $\app{\gamma}{0}$, as the future pointing lightlike geodesic
connecting  $q_1$ to $p_0$.

The main properties of $\app{\gamma}{0}$, $\app{\gamma}{1}$, $q_0$ and $q_1$ are given by the following result, 
see Lemma 4 in \cite{FO20}. Note that this result is slightly more general then the above, since 
we in section \ref{sec_q_1} want to be able to make a small change to $p_0$ along the geodesic $\app{\gamma}{0}$,
without having to pick a new $\app{\gamma}{0}$. 
This changes $\app{\gamma}{1}$ to another nearby geodesics $\app{\hat \gamma}{1}$. In the sequel we will however denote
both geodesics by $\app{\gamma}{1}$.

\begin{figure}
\centering 
\includegraphics[width=5cm]{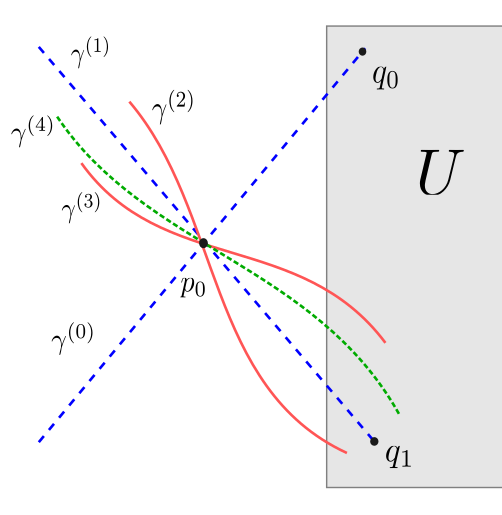}
\hspace{3.5cm}
\includegraphics[width=5cm]{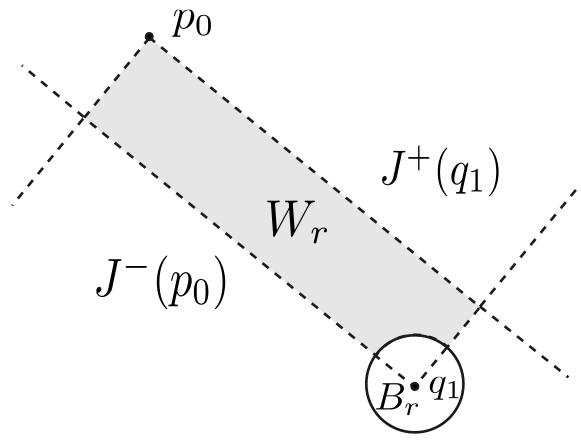}
\caption{On the left is an illustration of the choice of geodesics. The optimal geodesics $\app{\gamma}{0}$ and $ \app{\gamma}{1}$
are indicated by straight lines. On the right is an illustration of the set $W_r$.}\label{fig_geom_setup}
\end{figure}

\begin{lemma}\label{lem_variation_of_geodesics} 
Let $p_0 \in D \setminus U$. Then there exists a point $q_0  \in U$
and a lightlike geodesic $\app{\gamma}{0}$, with  $\app{\gamma}{0}(0) = q_0$. Let 
$p_0 = \app{\gamma}{0}(s_0)$, where $s_0 < 0$, then the following holds:
\begin{enumerate}[(i)]
\item  For all $\hat s_0 \in (s_0 -\eps , s_0+ \eps)$, where $\eps > 0$ is small, there exists 
a point $\hat q_{1} \in U$ and a future pointing lightlike geodesic $\app{\hat \gamma}{1}$, such that
$\app{\hat \gamma}{1}$ goes from $ \hat q_{1}$ to the point $\app{\gamma}{0}(\hat s_0)$.

\item  The point $ \app{\hat\gamma}{1}(\hat s_0)$ is the unique intersection point of $\app{\hat \gamma}{1}$ and $\app{\gamma}{0}$
in $M_T$.
\end{enumerate}
\end{lemma}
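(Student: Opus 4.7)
The plan is to prove (i) by directly running the same ``latest emission time'' construction used to define $\app{\gamma}{1}$ and $q_1$ before the statement of the lemma, but starting from $\hat p_0 := \app{\gamma}{0}(\hat s_0)$ instead of $p_0$; and to prove (ii) by contradiction, using the optimality built into the choices of $q_0$ and $\hat q_1$ together with the classical Lorentzian ``shortcut'' principle that a broken future-pointing null geodesic can always be deformed into a strictly timelike curve with the same endpoints.

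For part (i), fix $\hat s_0\in(s_0-\eps,s_0+\eps)$ and set $\hat p_0:=\app{\gamma}{0}(\hat s_0)$. Since $p_0\in D\setminus U$ and the causal relations on the globally hyperbolic manifold $N$ vary continuously, for $\eps$ small enough $\hat p_0$ still lies in $D$ and admits a past-pointing causal curve to some point $(\tilde t,\tilde x)\in U$. Apply to $\hat p_0$ the same recipe that defined $q_1$ and $\app{\gamma}{1}$, namely
\[
\hat t_1 := \sup\bigl\{\,s\in[0,T]\ :\ \tau((s,\tilde x),\hat p_0)>0\,\bigr\},\qquad \hat q_1 := (\hat t_1,\tilde x),
\]
and invoke the fact that two distinct points with $p\leq q$ and $\tau(p,q)=0$ must be joined by a future-pointing lightlike geodesic (Theorem~10.46 in \cite{On83}) to produce $\app{\hat\gamma}{1}$ from $\hat q_1$ to $\hat p_0$. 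Continuity of the construction in $\hat s_0$ keeps $\hat q_1$ inside $U$.

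For part (ii), assume toward contradiction that $\app{\hat\gamma}{1}\cap\app{\gamma}{0}\cap M_T$ contains a second point $p'\neq\hat p_0$. The two geodesics are distinct lightlike geodesics at $\hat p_0$, since $\dot{\app{\gamma}{0}}$ points toward $q_0$ in the future while $\dot{\app{\hat\gamma}{1}}$ arrives at $\hat p_0$ from $\hat q_1$ in the past. Depending on whether $p'$ precedes or follows $\hat p_0$ along each of the two geodesics, one can concatenate a segment of $\app{\hat\gamma}{1}$ with a segment of $\app{\gamma}{0}$ to obtain a future-pointing causal path from $\hat q_1$ (or a point near it) to $q_0$ (or a point near it) which has a genuine corner at either $p'$ or $\hat p_0$. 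By the standard corner-smoothing lemma in Lorentzian geometry (cf.\ Proposition~10.46 and the surrounding discussion in \cite{On83}) such a broken null path is not maximizing and can be deformed into a strictly timelike curve with the same endpoints, yielding strictly positive time separation. Projecting the resulting timelike curve onto the spatial slices $\{\tilde x\}\subset U$ (for the $\app{\gamma}{0}$-side) or $\{\tilde x_1\}\subset U$ (for the $\app{\hat\gamma}{1}$-side), we obtain either an observation time strictly less than $t_0$ with $\tau(p_0,(t,\tilde x))>0$, contradicting the infimum characterization of $t_0$, or an emission time strictly greater than $\hat t_1$ with $\tau((t,\tilde x_1),\hat p_0)>0$, contradicting the supremum characterization of $\hat t_1$.

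The main obstacle is the case analysis in part (ii): depending on whether $p'$ lies ahead of or behind $\hat p_0$ on each of the two geodesics, there are several geometric configurations, and in each one must identify \emph{which} of the two optimality statements — the infimum defining $q_0$ or the supremum defining $\hat q_1$ — is violated by the smoothed timelike curve. A careful bookkeeping shows that in every configuration the corner-smoothing produces a timelike shortcut whose projection onto the appropriate spatial slice strictly improves the relevant extremum, yielding the contradiction and hence the uniqueness of the intersection point $\hat p_0$.
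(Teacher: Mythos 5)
Your construction is the one the paper intends: the paper gives no proof of this lemma (it is quoted from \cite[Lemma 4]{FO20}), but the surrounding text sets up exactly your recipe --- the latest emission time defining $\hat q_1$, Theorem 10.46 of \cite{On83} for the existence of the connecting null geodesic, and the shortcut argument for uniqueness --- so in approach you are aligned with the source. Part (i) is essentially fine for $\hat s_0 \ge s_0$, since then $\app{\gamma}{0}(\hat s_0) \in J^+(p_0) \subset J^+(U)$ and the supremum defining $\hat t_1$ ranges over a nonempty set; but for $\hat s_0 < s_0$ your appeal to ``continuity of causal relations'' is not an argument: if $p_0$ lies on $\partial J^+(U)$, a point slightly to the past of $p_0$ along $\app{\gamma}{0}$ need not lie in $J^+(U)$ at all, and this direction of the variation genuinely requires justification.

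The substantive gap is in part (ii), precisely at the step you defer to ``careful bookkeeping.'' The corner-smoothing argument produces a contradiction only when the broken null curve joins two points whose time separation is already known to vanish. That happens in exactly two configurations: (a) $p'$ lies strictly between $\hat q_1$ and $\hat p_0$ on $\app{\hat\gamma}{1}$ and in the past of $\hat p_0$ on $\app{\gamma}{0}$, in which case the corner at $p'$ gives $\tau(\hat q_1,\hat p_0)>0$, contradicting $\tau(\hat q_1,\hat p_0)=0$ (continuity of $\tau$ together with the supremum defining $\hat t_1$); and (b) $p'$ lies strictly between $\hat p_0$ and $q_0$ on $\app{\gamma}{0}$, in which case the corner gives $\tau(\hat p_0,q_0)>0$, contradicting $\tau(\hat p_0,q_0)=0$ --- but the latter is only known, via the reverse triangle inequality $\tau(p_0,q_0)\ge\tau(p_0,\hat p_0)+\tau(\hat p_0,q_0)$, when $\hat s_0\ge s_0$. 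If instead $p'$ lies on $\app{\hat\gamma}{1}$ before $\hat q_1$, or on $\app{\gamma}{0}$ beyond $q_0$, or if $\hat s_0<s_0$ in case (b), the smoothed timelike curve only yields statements such as $\tau(p',q_0)>0$ or $\tau(\hat q_1,q_0)>0$, which violate neither the infimum defining $t_0$ nor the supremum defining $\hat t_1$, so no contradiction results. These configurations must be excluded by a separate argument (or the claim must be restricted to the geodesic segments actually used downstream, cf.\ Remark \ref{rem_support}); your proof as written does not close them.
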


\noindent 
We will further need the additional geodesics $\app{\gamma}{2}$, $\app{\gamma}{3}$ and $\app{\gamma}{4}$,
which we will specify next.
To select $\app{\gamma}{2},\app{\gamma}{3}$ and $\app{\gamma}{4}$, we fix coordinates as follows.
Let $\app{\xi}{0},\app{\xi}{1} \in T_{p_0}N$ be the tangent vectors
$
\app{\xi}{0} = (\app{\gamma}{0})'|_{ p_0},
$ and $
\app{\xi}{1} = (\app{\gamma}{1})'|_{ p_0},
$
so that
\begin{equation} \label{eq_gamma_0_1}
\begin{aligned}
\app{\gamma}{0} = \gamma_{p_0,\app{\xi}{0}}
\quad \text{ and } \quad
\app{\gamma}{1} = \gamma_{p_0,\app{\xi}{1}}.
\end{aligned}
\end{equation}
We choose normal coordinates $(x^0,x^1,\dots,x^n)$ centered at the point $p_0$.
By rescaling the lightlike vectors if necessary, we may assume that the coordinates are such that
$$
\app{\xi}{0} =  (1,\dots), \qquad \app{\xi}{1} = (1,\dots).
$$
Notice that the spatial components $(\app{\xi}{0})'$ and
$(\app{\xi}{1})'$ of $(\app{\xi}{0})$ and
$(\app{\xi}{1})$,
lie in some $2$-dimensional plane. We can thus rotate the coordinate
system in the spatial coordinates, so that this plane corresponds to the $x^1 x^2$-plane, and so that 
the last coordinates $x^j$, $j \geq 3$ of $(\app{\xi}{1})'$ and $(\app{\xi}{0})'$ are zero. Further rotating in the
$x^1x^2$-plane, we can assume that $(\app{\xi}{0})' = (1,0,\dots,0)$,
since $\app{\xi}{0}$ is lightlike. Thus, we have 
$$
\app{\xi}{1} = (1,\dots,0), 
\qquad 
\app{\xi}{0} = (1,1,0,\dots,0). 
$$
Using the fact that $\app{\xi}{1}$ is lightlike, we have that
$$
0 = \langle  \app{\xi}{1}, \app{\xi}{1} \rangle_\eta = - 1 + s^2 + t^2, \quad \text{ where } \quad
\app{\xi}{1} = (1,s,t,0,\dots,0).
$$
By the above, we can express $t = \pm \sqrt{1-s^2}$, yielding coordinates in which
\begin{equation} \label{eq_coords_xi_0_1}
\begin{aligned}
\app{\xi}{0} = (1,s, \pm \sqrt{1-s^2},0,\dots,0), \qquad \app{\xi}{1} =  (1,1,0,\dots,0), \quad s \in [-1,1].
\end{aligned}
\end{equation}
We now choose two additional directions  $\app{\xi}{2}$ and $\app{\xi}{3}$, which will specify
the tangent vectors for $\app{\gamma}{2}$ and $\app{\gamma}{3}$.
These light like vectors  $\app{\xi}{2}$ and $\app{\xi}{3}$ are given by
\begin{equation} \label{eq_coords_xi_2_3}
\begin{aligned}
\app{\xi}{2} := (1,\sqrt{1-\sigma^2},\sigma,0,\dots,0), \qquad 
\app{\xi}{3} := (1,\sqrt{1-\sigma^2},- \sigma,0,\dots,0), \qquad \sigma \in (0,1).
\end{aligned}
\end{equation}
We define $\app{\gamma}{2}$ and $\app{\gamma}{3}$ as
\begin{equation} \label{eq_gamma_2_3}
\begin{aligned}
\app{\gamma}{2} := \gamma_{p_0,\app{\xi}{2}}
\quad \text{ and } \quad
\app{\gamma}{3} := \gamma_{p_0,\app{\xi}{3}}.
\end{aligned}
\end{equation}
We will also need a $\app{\gamma}{4}$ in section \ref{sec_q2_only}. This we define as 
\begin{equation} \label{eq_xi4}
\begin{aligned}
\app{\gamma}{4} := \gamma_{p_0,\app{\xi}{4}} \quad  \text{ where } \quad
\app{\xi}{4} :=  \big( \sqrt{ 1 + \sigma^2 } , 1 , \sigma ,0 ,\dots,0 \big).
\end{aligned}
\end{equation}
Note that the vectors $\app{\xi}{j}$, $j=2,3,4$ are  lightlike, so that $\app{\gamma}{j}$ are also lightlike. 
Notice also 
that $\app{\xi}{j}$  depends on the parameter $\sigma$, and that for small $\sigma>0$,
$$
\app{\xi}{j} \sim (1,1,0,\dots,0) = \app{\xi}{1}.
$$
The geodesics $\app{\gamma}{0},\dots,\app{\gamma}{4}$ have only one common intersection point in $M_T$,
which is $p_0$, since $\app{\gamma}{0}$ and $\app{\gamma}{1}$ have according to Lemma \ref{lem_variation_of_geodesics}
only one intersection point in the set $M_T$.
Pairs and triples of the geodesics $\app{\gamma}{0},\dots,\app{\gamma}{4}$ may however have several intersection points.
In order to use the higher order WKB interaction construction of Lemma \ref{lem_WKB_source_w_v}
we need to avoid these types of intersection points in  parts of $J^-(p_0)$. 
This is because we want to disregard the $\app{w}{ijk}_N$ and $\app{w}{ij}_N$ parts of the  WKB interaction solutions,
and this will be possible if $\supp(\app{w}{ijk}_N)$ and $\supp(\app{w}{ij}_N)$ do not intersects parts of $J^-(p_0)$.
The set where we want to avoid having additional intersection points is  
\begin{equation} \label{eq_D_P}
\begin{aligned}
W_r  := J^+(q_1) \cap J^-(p_0) \setminus J^-(B(q_1,r)),\qquad r>0.
\end{aligned}
\end{equation}
See figure \ref{fig_geom_setup}.
Next we apply a shortcut argument (see section 2 in \cite{KLU18}) to show that $\app{\gamma}{1}$ cannot intersect the other geodesics in $W_r$
except at $p_0$.
This type of argument is also used to prove Lemma \ref{lem_variation_of_geodesics}
in \cite{FO20}.

\begin{lemma} \label{lem_gamma_1_no_intersections}
The geodesic  $\app{\gamma}{1}$ intersects the geodesic $\app{\gamma}{j}$, $j=2,3,4$,
in the set $W_r$,  only at $p_0$.
\end{lemma}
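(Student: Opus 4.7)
Proof plan: I would run a short-cut argument, of the kind used in \cite{KLU18} and behind Lemma \ref{lem_variation_of_geodesics}. Suppose toward contradiction that there exists a point $p' \in W_r \cap \app{\gamma}{1} \cap \app{\gamma}{j}$ with $p' \neq p_0$. Since $p' \in W_r$ means in particular $p' \notin J^-(B(q_1, r))$, we have $p' \neq q_1$; hence $p'$ sits strictly between the endpoints on both geodesics.

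The geometric setup is then: $\app{\gamma}{1}$ traverses $q_1$ to $p'$ as a future-pointing lightlike segment, because $p' \in J^+(q_1) \cap J^-(p_0)$ lies on the future-pointing lightlike geodesic from $q_1$ to $p_0$. For $\app{\gamma}{j} = \gamma_{p_0, \app{\xi}{j}}$ with future-pointing $\app{\xi}{j}$, the assumption $p' \in J^-(p_0) \setminus \{p_0\}$ forces $p' = \app{\gamma}{j}(s^*)$ for some $s^* < 0$, so $\app{\gamma}{j}$ restricted to $[s^*, 0]$ is a future-pointing lightlike segment from $p'$ to $p_0$. Concatenating these two segments yields a broken future-pointing causal curve from $q_1$ to $p_0$.

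The tangents of the two segments at $p'$ are linearly independent: at $p_0$ the vectors $\app{\xi}{1}$ and $\app{\xi}{j}$ are non-proportional by the explicit construction in \eqref{eq_coords_xi_0_1}--\eqref{eq_xi4} (for $\sigma \in (0,1)$ none of $\app{\xi}{2}, \app{\xi}{3}, \app{\xi}{4}$ is a scalar multiple of $\app{\xi}{1}$), and by uniqueness of geodesics from initial data the two distinct geodesics cannot share a tangent at any other common point. Thus the broken curve has a genuine kink at the interior point $p'$. By the classical short-cut lemma for Lorentzian manifolds (cf.\ \cite[Prop.\ 10.46]{On83} and the discussion in \cite{KLU18}), any future-pointing broken lightlike curve with distinct tangents at an interior corner can be smoothed into a strictly timelike curve with the same endpoints, yielding $\tau(q_1, p_0) > 0$.

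This contradicts the defining property of $q_1 = (t_1, \tilde x)$: since $t_1 = \sup\{ s \in [0,T] : \tau((s, \tilde x), p_0) > 0 \}$, the upper semicontinuity of $\tau$ in globally hyperbolic spacetimes forces $\tau(q_1, p_0) = 0$. The main point to verify carefully is the distinctness of tangents at $p'$, which reduces to non-collinearity of $\app{\xi}{1}$ and $\app{\xi}{j}$ at $p_0$ (automatic from the coordinate formulas); the rest is a direct application of the short-cut lemma, which will be cited rather than reproved. I anticipate no serious obstacle beyond these two ingredients.
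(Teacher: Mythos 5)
Your proposal is correct and is essentially the paper's own proof: the same short-cut argument, concatenating the segment of $\app{\gamma}{1}$ from $q_1$ to the putative intersection point with the segment of $\app{\gamma}{j}$ back to $p_0$, invoking \cite[Thm.~10.46]{On83} to get $\tau(q_1,p_0)>0$ and contradicting the optimal choice of $q_1$. The only difference is that you explicitly verify the broken curve is not a null pregeodesic via non-proportionality of $\app{\xi}{1}$ and $\app{\xi}{j}$, a detail the paper asserts in one sentence.
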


\begin{proof}
Assume that the claim is false and that e.g. $\app{\gamma}{2}$ intersects $\app{\gamma}{1}$
at $p_1 \in W_r$, and $p_1 \neq p_0$.
We make the following construction. Consider the curve
\begin{align*}
\gamma = 
\begin{cases}
\app{\gamma}{1}, \quad \text{from point $q_1$ to $p_1$, } \\		
\app{\gamma}{2}, \quad \text{from point $p_1$ to $p_0$. }
\end{cases}
\end{align*}
Theorem 10.46 in \cite{On83} states that if $\mu$ is a causal curve from $p$ to $q$,
that is not a null pregeodesic, then there exists a timelike curve arbitrarily close to $\mu$ and in particular $\tau(p,q) > 0$.
The curve $\gamma$ is not a null pregeodesic, since we change the geodesic at point $p_1$.
It follows that there is a timelike curve $\alpha$ between $q_1$ and $p_0$, and that 
$\tau(q_1,p_0) > 0$. But this is a contradiction, since we chose $q_1$, so that 
$\tau(q_1,p_0) = 0$.
\end{proof}

\noindent
We will furthermore need to check that $\app{\gamma}{2}$, $\app{\gamma}{3}$ and $\app{\gamma}{4}$ do
not intersect in $W_r$.  The underlying reason why this holds for small $\sigma > 0$,
is that $\app{\gamma}{1}$ is in a sense optimal and that
$\app{\gamma}{j} \sim \app{\gamma}{1}$, $j=2,3,4$, when $\sigma$ becomes small.

For this we will employ similar arguments as in \cite{KLU18} and define the cut locus function 
$\rho : L^-N \to \R \cup \{ \infty\}$, where $L^-N$ is the bundle of past pointing
lightlike vectors, and
\begin{equation} \label{eq_cut_locus_func}
\begin{aligned}
\rho(p,\xi) := \sup \{ t \in [0,T') \,:\, \tau( \gamma_{p,\xi}(t), p ) = 0  \}, 
\end{aligned}
\end{equation}
where $T'$ is the maximal value for which the geodesic $\gamma_{p,\xi}$ is defined.
See also p. 794 in \cite{KLU18} and Definition 9.32 in \cite{BEE81}.
The  past null cut point $p(x,\xi)$ is given by 
\begin{equation} \label{eq_cut_point}
\begin{aligned}
p(x,\xi) = \gamma_{x,\xi}(t)|_{t=\rho(x,\xi)}. %
\end{aligned}
\end{equation}
Note that a cut point $p(x,\xi)$ is, on a globally hyperbolic manifold, the first conjugate point along $\gamma_{x,\xi}$
or the first point where there is another geodesic $\gamma_{x,\eta}$ to $p(x,\xi)$,
with $\eta \neq c \xi$, $c\in \R$, see \cite{BEE81} Theorem 9.15. 

Before proving that $\app{\gamma}{j}$, $j=2,3,4$ do not intersect in $W_r$, when $\sigma>0 $ is small,
we shall prove a slightly more general result.
We will show that %
there are no cut points $p(p_0, \app{\xi}{j})$, $j=2,3,4$ in $W_r$, when the $\sigma>0$ is small.
Due to the optimality of $\gamma^{(1)}$, geodesics originating from $p_0$ close to $\gamma^{(1)}$ cannot intersect each other between $p_0$ and $q_1$. To make the argument rigorous, we use the lower semicontinuity of the cut locus function and the fact that for $\sigma$  sufficiently small, the geodesics $\gamma^{(k)}$, $k=2,3,4$ lie close to $\gamma^{(1)}$. Thus any possible intersection points of the geodesics must occur in the past of $q_1$.

\begin{lemma} \label{lem_no_cut_points_in_W_r}
Let $r>0$. There are no cut points of the form $p \big(p_0, \app{\xi}{k} \big)$, with $k=2,3,4$, 
in $W_r$, when the parameter $\sigma>0$ is small.
\end{lemma}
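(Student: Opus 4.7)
My plan is to combine the optimality built into the choice of $q_1$ with the lower semicontinuity of the cut locus function $\rho$ and the continuity of the exponential map. For each $k = 1,2,3,4$ I will work with the past-pointing lightlike vector $\eta^{(k)} := -\xi^{(k)} \in L^-_{p_0} N$, so that the past null cut point $p(p_0, \xi^{(k)}) \in J^-(p_0)$ is realized as $\gamma_{p_0, \eta^{(k)}}(\rho(p_0, \eta^{(k)}))$. By construction, $\gamma_{p_0, \eta^{(1)}}$ reaches $q_1$ at some parameter value $T_0 > 0$, and by the optimal choice of $q_1$ (the sup defining $t_1$) one has $\tau(q_1, p_0) = 0$. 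Reading off \eqref{eq_cut_locus_func} directly gives $\rho(p_0, \eta^{(1)}) \geq T_0$, i.e.\ the past cut point along $\gamma^{(1)}$ does not occur before $q_1$.

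Next I would invoke the standard fact that on a globally hyperbolic Lorentzian manifold, $\rho$ is lower semicontinuous on the bundle $L^- N$ of past-pointing lightlike vectors; see \cite{BEE81} and \cite{KLU18}. From the expressions \eqref{eq_coords_xi_2_3} and \eqref{eq_xi4}, the vectors $\xi^{(k)}$ for $k=2,3,4$ converge to $\xi^{(1)} = (1,1,0,\ldots,0)$ as $\sigma \to 0^+$, and hence $\eta^{(k)} \to \eta^{(1)}$. Lower semicontinuity then supplies, for every $\delta>0$, a threshold $\sigma_0 > 0$ such that for all $\sigma \in (0, \sigma_0)$ and $k = 2,3,4$,
\[
\rho(p_0, \eta^{(k)}) > \rho(p_0, \eta^{(1)}) - \delta \geq T_0 - \delta.
\]
So no cut point of $\gamma^{(k)}$ can occur before the parameter value $T_0 - \delta$.

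Finally I need to translate this parameter bound into a spacetime inclusion. By joint continuity of the exponential map $\exp_{p_0}$ in its tangent argument together with continuity in the parameter, one has $\gamma_{p_0, \eta^{(k)}}(T_0 - \delta) \to \gamma_{p_0, \eta^{(1)}}(T_0) = q_1$ as $(\sigma, \delta) \to (0,0)$. Choosing $\sigma$ and $\delta$ sufficiently small I can thus force $\gamma_{p_0, \eta^{(k)}}(T_0 - \delta) \in B(q_1, r/2)$, and the cut point $p(p_0, \xi^{(k)})$ lies further in the past along $\gamma_{p_0, \eta^{(k)}}$, hence in $J^-(B(q_1, r/2)) \subset J^-(B(q_1, r))$. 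Comparing with $W_r = J^+(q_1) \cap J^-(p_0) \setminus J^-(B(q_1, r))$ immediately gives $p(p_0, \xi^{(k)}) \notin W_r$, as claimed.

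The step I expect to require the most care is this last translation from a bound on the affine parameter $\rho(p_0, \eta^{(k)})$ to a genuine causal containment $p(p_0, \xi^{(k)}) \in J^-(B(q_1, r))$. It must combine the joint continuity of $(\sigma, \delta) \mapsto \gamma_{p_0, \eta^{(k)}}(T_0 - \delta)$ with the openness of $J^-(B(q_1, r))$ and the monotonicity of causal pasts along a lightlike geodesic. The remaining ingredients (optimality $\tau(q_1, p_0) = 0$, the definition \eqref{eq_cut_locus_func}, and the lower semicontinuity of $\rho$) enter in a routine way.
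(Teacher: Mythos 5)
Your proposal is correct and follows essentially the same route as the paper's proof: optimality of $q_1$ forces $\rho(p_0,\cdot)\ge T_0$ along $\gamma^{(1)}$, lower semicontinuity of the cut locus function transfers this (up to $\delta$) to the nearby directions $\xi^{(k)}$, and continuity of the exponential map places $\gamma^{(k)}(T_0-\delta)$ inside $B(q_1,r)$ so that the cut point, lying further to the past, falls into $J^-(B(q_1,r))$ and hence outside $W_r$. The only differences are cosmetic: you argue directly with a fixed $\delta$ rather than by contradiction along a sequence $\sigma_j\to 0$, and you make the past-pointing normalization $\eta^{(k)}=-\xi^{(k)}$ explicit, which the paper leaves implicit.
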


\begin{proof}
We prove the claim for the geodesic $\app{\gamma}{2}$ in \eqref{eq_gamma_2_3}. The other cases are  identical.
Fix $r>0$ and assume, for contradiction, that the claim is false. Then there exists a sequence $\sigma_j \to 0$, as $j\to \infty$, and a corresponding sequence of cut points 
$$
\tilde p_j =  p\big(p_0,\app{\xi}{2}_j\big) \in W_r,
$$
where we write $\app{\xi}{2}_j = \app{\xi}{2}(\sigma_j)$ and $W_r$ was defined in \eqref{eq_D_P}. 
The cut locus function $\rho$ is lower semicontinuous on all globally hyperbolic manifolds,
see Propositions 9.33 in \cite{BEE81}. This means that  
$$
\liminf_{j} \rho \big(p_0, \app{\xi}{2}_j \big) \geq \rho \big(p_0, \app{\xi}{1} \big)
$$
since $\xi_j^{(2)}\to\xi^{(1)}$. 
It will be convenient to define $\tilde t$, $\tilde t_j$ and $t_0$ by
$$
\tilde t := \rho \big(p_0, \app{\xi}{1} \big), \qquad
\tilde t_j := \rho \big( p_0, \app{\xi}{2}_j \big), \qquad
\app{\gamma}{1} (t_0) = q_1.
$$
Note that we might have that $\tilde t> t_0$. 
Since  $\rho$  is lower semicontinuous at $(p_0,\app{\xi}{1}) \in L^-N$
and $\app{\xi}{2}_j \to \app{\xi}{1}$, there exists $\eps'_j \to 0$
such that
\begin{equation} \label{eq_t_ineq}
\begin{aligned}
\tilde t_j > \tilde t - \eps_j' 
\quad \Rightarrow \quad 
\tilde t_j \geq t_0 - \eps_j',
\end{aligned}
\end{equation}
where we used that $\tilde t \geq t_0$, which holds since $\app{\gamma}{1}$ has no cut points 
in between $p_0$ and $q_1$.

\medskip
\noindent
We will now show that $\tilde p_j \in J^-(B(q_1,r))$, when $j$ is large, which is a contradiction
by the definition of $W_r$. Using \eqref{eq_t_ineq}, we write $\tilde t_j$ as
$$
\tilde t_j = t_0 - \eps_j' + \beta_j,
$$
where $\beta_j \geq 0$. 
Let us first consider the point $\app{\gamma}{2}(\tilde t'_j)$, where 
$$
\tilde t'_j := t_0 - \eps_j',
$$
This point $\app{\gamma}{2}(\tilde t'_j)$ is not as far along the geodesic $\app{\gamma}{2}_j$ as $\tilde p_j = \app{\gamma}{2}(\tilde t_j)$.
By the continuity of the exponential map and, since $\exp_{p_0}(t \xi) = \gamma_{p_0,\xi}(t)$, we have that
$$
\app{\gamma}{2}_j(\tilde t'_j) = \exp_{p_0}( \tilde t'_j\xi^{(2)}_j) \in B(q_1, r)
$$
for all $j$ large enough, since $\tilde t_j'\to t_0$ and $\xi_j^{(2)}\to \xi^{(1)}$ as $j\to \infty$.  
Thus we see that $\app{\gamma}{2}_j(\tilde t'_j)  \notin W_r$.
Now let us go back to  $\tilde p_j = \app{\gamma}{2}(\tilde t_j)$.
We know that
$
\app{\gamma}{2}( \tilde t'_j ) \in B_r(q_1, r)
$
and since $\tilde t_j = \tilde t'_j  + \beta$, with $\beta > 0$, we see that $\tilde p_j$ is further to 
the past along the geodesic $\app{\gamma}{2}_j$, i.e.
$$
\tilde p_j = \app{\gamma}{2}_j( \tilde t'_j  + \beta) \in J^-(B_r(q_1, r)),
$$
and thus $\tilde p_j \notin W_r$, when $j$ is large.
It follows that $W_r$ contains no cut points $\tilde p_j$, when $j$ is large
or, in other words, when $\sigma_j$ is small.
\end{proof}

\noindent 
As a direct consequence of Lemma \ref{lem_no_cut_points_in_W_r} we have the following result.

\begin{lemma} \label{lem_gamma_2_3_4_no_intersections}
Let $r>0$. The geodesics  $\app{\gamma}{2}$, $\app{\gamma}{3}$ and $\app{\gamma}{4}$  have no common intersection points 
in $W_r$, apart from $p_0$, provided that  $\sigma > 0$ is small.
\end{lemma}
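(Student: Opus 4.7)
The plan is to derive the stated claim from Lemma~\ref{lem_no_cut_points_in_W_r} by a contradiction argument using the classical characterization of past null cut points on globally hyperbolic manifolds. In fact, I will argue the slightly stronger statement that no pair $\gamma^{(j)}, \gamma^{(k)}$ with $j\ne k \in \{2,3,4\}$ can meet inside $W_r$ except at $p_0$, from which the stated claim follows a fortiori.

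Suppose for contradiction that two of the past extensions of $\gamma^{(j)}$ and $\gamma^{(k)}$, $j\ne k$, share an intersection point $p\in W_r$ with $p\ne p_0$. Parametrize $\gamma^{(j)}$ past-ward from $p_0$ and let $t_p>0$ be the parameter with $\gamma^{(j)}(t_p)=p$. Since $\xi^{(j)}$ and $\xi^{(k)}$ are linearly independent (by \eqref{eq_coords_xi_2_3}, \eqref{eq_xi4}), the two geodesics are distinct, and by the standard characterization of null cut points on a globally hyperbolic manifold (\cite{BEE81}, Theorem~9.15, as recalled after \eqref{eq_cut_point}), the past null cut point $p(p_0,-\xi^{(j)})$ must occur at parameter $\rho(p_0,-\xi^{(j)}) \le t_p$. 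Consequently
\[
p \;\le\; p(p_0,-\xi^{(j)}) \;\le\; p_0
\]
in the causal ordering, since along the past-pointing geodesic from $p_0$ a smaller parameter corresponds to a later time.

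Next I show that $p(p_0,-\xi^{(j)})\in W_r$, which will contradict Lemma~\ref{lem_no_cut_points_in_W_r} for small $\sigma$. Membership in $J^-(p_0)$ is automatic since the cut point lies on a past-pointing geodesic issued from $p_0$. For $J^+(q_1)$, transitivity gives $q_1 \le p \le p(p_0,-\xi^{(j)})$, so $p(p_0,-\xi^{(j)}) \in J^+(q_1)$. Finally, if the cut point belonged to $J^-(B(q_1,r))$, there would exist $q'\in B(q_1,r)$ with $p(p_0,-\xi^{(j)})\le q'$; combined with $p\le p(p_0,-\xi^{(j)})$ this would force $p\le q'$, contradicting $p\in W_r$. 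Hence $p(p_0,-\xi^{(j)})\notin J^-(B(q_1,r))$, and we conclude $p(p_0,-\xi^{(j)}) \in W_r$, contradicting Lemma~\ref{lem_no_cut_points_in_W_r}.

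The delicate point is the placement of the cut point strictly on the segment from $p_0$ to $p$: once this is in hand, the rest is purely causal-logic manipulation via transitivity of $\le$. Care is needed with parametrization conventions (past- vs.\ future-pointing vectors, since $\xi^{(j)}$ are defined as future-pointing in \eqref{eq_coords_xi_2_3}--\eqref{eq_xi4}, so one considers $-\xi^{(j)}$ when invoking $\rho$ from \eqref{eq_cut_locus_func}), but the cut point characterization on globally hyperbolic manifolds is exactly what makes the argument work without additional geometric input.
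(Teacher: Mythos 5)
Your proof is correct and takes essentially the same route as the paper: an intersection of two of the geodesics $\app{\gamma}{2},\app{\gamma}{3},\app{\gamma}{4}$ in $W_r$ away from $p_0$ forces, by the shortcut/cut-point characterization, a null cut point of $p_0$ to lie between $p_0$ and the intersection point and hence in $W_r$, contradicting Lemma~\ref{lem_no_cut_points_in_W_r}. Your transitivity argument verifying that the cut point really does lie in $W_r$ (in particular that it cannot fall into $J^-(B(q_1,r))$) is actually spelled out more carefully than in the paper's own proof, which asserts this membership without detail.
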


\begin{proof}
Assume the claim is false and that e.g. $\gamma^{(2)}$ and $\gamma^{(3)}$ have an intersection point in $W_r$ for all arbitrarily small $\sigma >0$. The other cases are identical. 
Then there exists a sequence $\sigma_j \to 0$, as $j \to \infty$,
and a sequence of intersection points of $p_j \in \app{\gamma}{2}_j \cap \app{\gamma}{3}_j$,
with $p_j \in W_r$.

Let $k=2$ or $k=3$.
The point $p_j$ is either the cut point $p(p_0, \app{\xi}{k}_j)$, or 
the cut point  $p(p_0,\xi^{(k)}_j) = \tilde p_j$, where $\tilde p_j$ lies 
in between $p_0$ and $p_j$ on the geodesic $\gamma^{(k)}_j$   (see e.g. Lemma 9.13 \cite{BEE81}). Thus   $p_j\in W_r$.
In both cases we have a cut point in the set $W_r$ for arbitrarily small $\sigma_j>0$ contradicting 
Lemma \ref{lem_no_cut_points_in_W_r}.
\end{proof}

\noindent
\begin{remark}\label{rem_support} Later we choose Gaussian beams $ \app{v}{j}$, $j = 2,3,4$, with
sources $ \app{f}{j}$, such that $\supp( \app{f}{j}) \subset B(q_j, r_j) \subset U$,
where the points $q_2,q_3,q_4$ are near the point $q_1$
and $r_j >0 $ is small. Notice that Lemmas \ref{lem_gamma_1_no_intersections} 
and \ref{lem_gamma_2_3_4_no_intersections} imply that we can choose small enough $\sigma> 0$
and $B(q_1,r)$, so that $q_2,q_3,q_4 \notin B(q_1,r)$ and the  geodesic segments
\begin{align*}
\app{\gamma}{j}_+ = \app{\gamma}{j} \cap J^+(q_j)
\end{align*}
have only the intersection point $p_0 \in \app{\gamma}{i}_+ \cap \app{\gamma}{j}_+$ in $W_r$, when $i,j=1,2,3,4$
are distinct.
\end{remark}

\noindent
Finally we will need  information on the possible future intersection points of 
the geodesics $\app{\gamma}{j}$ in $J^+(p_0)$. We want in particular that these stay away from the
geodesic $\gamma^{(0)}$. The following lemma ensures this:  It states that for a formal Gaussian beam on $\gamma^{(0)}$ with support in a sufficiently small tube around it, has a support that is disjoint from the geodesics $\gamma^{(j)}$ ($j=1,2,3,4$) outside a neighborhood of $p_0$.

\begin{lemma} \label{lem_gamma_1_future}
There exists $r>0$ such that the following holds. Let $N_r$ be a tubular neighbourhood of radius $r>0$ about the geodesic segment of $\app{\gamma}{0}$
starting at a point $p \notin B(p_0, \eps)$, $\eps>0$ and ending at $q_0$. Then 
$$
 \app{\gamma}{j} \cap N_r  = \emptyset, \qquad j=1,2,3,4,
$$
when $\sigma > 0$ is sufficiently small.
\end{lemma}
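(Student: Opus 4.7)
The plan is to combine Lemma~\ref{lem_variation_of_geodesics}(ii), compactness, and continuous dependence of the geodesic flow on initial data. Fix any background Riemannian metric on $N$ and let $d$ denote the induced distance. Let $\Gamma$ denote the compact geodesic segment of $\app{\gamma}{0}$ from $p$ to $q_0$; by hypothesis $\Gamma$ avoids $B(p_0,\eps)$, so in particular $p_0 \notin \Gamma$. Fix once and for all a compact neighbourhood $K \subset M_T$ of $\Gamma$ which still avoids $B(p_0,\eps/2)$ and is small enough that $N_r \subset K$ whenever $r < \eps/4$.

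First I would dispose of $j=1$. By Lemma~\ref{lem_variation_of_geodesics}(ii) we have $\app{\gamma}{0} \cap \app{\gamma}{1} \cap M_T = \{p_0\}$, hence $\Gamma \cap \app{\gamma}{1} = \emptyset$. Since $\app{\gamma}{1} \cap K$ is compact (it is the image under $\app{\gamma}{1}$ of the closed bounded set $(\app{\gamma}{1})^{-1}(K)$) and disjoint from the compact set $\Gamma$, the two sets are separated by a positive distance $2\rho_1 > 0$. Taking $r < \min(\eps/4,\rho_1)$ yields $N_r \cap \app{\gamma}{1} = \emptyset$.

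For $j=2,3,4$, the geodesics $\app{\gamma}{j} = \gamma_{p_0,\app{\xi}{j}(\sigma)}$ all pass through $p_0$, with tangent vectors $\app{\xi}{j}(\sigma) \to \app{\xi}{1}$ as $\sigma \to 0$. Continuous dependence of the geodesic flow on initial data gives $\app{\gamma}{j}(t) \to \app{\gamma}{1}(t)$ uniformly on any fixed compact parameter interval. I would next identify a compact parameter interval $I$ such that, uniformly in small $\sigma$, the condition $\app{\gamma}{j}(t) \in K$ forces $t \in I$; this uses only smoothness of the exponential map together with boundedness of $K$. Choosing $\sigma_0 > 0$ so small that $\sup_{t\in I} d(\app{\gamma}{j}(t),\app{\gamma}{1}(t)) < \rho_1/2$ whenever $\sigma < \sigma_0$, the portion of $\app{\gamma}{j}$ that can possibly meet $K$ lies within $\rho_1/2$ of $\app{\gamma}{1} \cap K$, and hence at distance at least $3\rho_1/2$ from $\Gamma$. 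Taking $r < \rho_1/2$ then forces $\app{\gamma}{j} \cap N_r = \emptyset$, as required.

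The only genuinely subtle point is the uniform-in-$\sigma$ confinement of the relevant portion of $\app{\gamma}{j}$ to a fixed compact parameter interval $I$ independent of $\sigma$, so that the continuous dependence estimate can be applied. This is a straightforward consequence of smoothness of the exponential map and the boundedness of $K$; the rest of the argument is a routine combination of compactness and continuous dependence.
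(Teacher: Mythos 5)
Your proposal is correct and follows essentially the same route as the paper: disjointness of the segment from $\app{\gamma}{1}$ via Lemma~\ref{lem_variation_of_geodesics}, a compactness argument giving a positive distance, and convergence $\app{\gamma}{j}\to\app{\gamma}{1}$ as $\sigma\to 0$ to handle $j=2,3,4$. You are in fact somewhat more careful than the paper, which glosses over the uniform confinement to a fixed compact parameter interval that you rightly flag as the one delicate point.
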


\begin{proof}
Denote by $\tilde \gamma$ the geodesic segment of $ \app{\gamma}{0}$ starting at the 
$p \notin B(p_0, \eps)$ and ending at $q_0$ in the claim.
We have that $\tilde \gamma$ and $ \app{\gamma}{1}$ do
not intersect by Lemma \ref{lem_variation_of_geodesics}.
Now  since the geodesics $ \app{\gamma}{j} \to \app{\gamma}{1}$, $j=2,3,4$, as $\sigma \to 0$,
we also have that  $\tilde \gamma$ and $ \app{\gamma}{j}$ do not intersect,
when $\sigma >0$ becomes small. There is hence a minimum positive distance (with respect to some auxiliary Riemannian distance) between the geodesics $ \app{\gamma}{j}$,
which are closed sets, and the
segment $\tilde\gamma$, which is compact. We can hence pick  a tubular neighbourhood $N_r$ about
$\tilde \gamma$ for which the claim holds.
\end{proof}

\subsection{Choosing special solutions } \label{sec_GBs_and_geodesics}

We now select specific  Gaussian beams  $\app{v}{0},\app{v}{1},\app{v}{2}$ and  $\app{v}{3}$, along with
the corresponding WKB interaction solutions $\app{w}{12}, \app{w}{13}$ and $\app{w}{23}$,
which will be used along with the integral identity of Lemma \ref{lem_iid_3rd} to prove $q^2_2 = \tilde q^2_2$, in
subsection \ref{sec_q_2} and then again in subsection \ref{sec_q_1} to prove the claims about $q_1$ and $F$. 
For this, we use  the following lemma (see also \cite[Lemma 1]{CLOP19}).

\begin{lemma} \label{lem_xis_lin_indep}
Let $\app{\xi}{0},\app{\xi}{1},\app{\xi}{2},\app{\xi}{3} \in T_{p_0}N$ 
be the lightlike vectors defined in section \ref{sec_geom_setup},
and which can in the normal coordinates chosen in section \ref{sec_geom_setup}, be written 
as
$$
\app{\xi}{1} =  (1,1,0,\dots,0),  \qquad
\app{\xi}{0} = \Big(1,s_0,\pm \sqrt{1-s_0^2},0,\dots,0 \Big), \quad s_0 \in [-1,1],
$$
and
$$
\app{\xi}{2} := (1,\sqrt{1-\sigma^2},\sigma,0,\dots,0), \qquad 
\app{\xi}{3} := (1,\sqrt{1-\sigma^2},- \sigma,0\dots,0), \qquad \sigma \in (0,1).
$$
Then the set $\{ \app{\xi}{0},\app{\xi}{1}, \app{\xi}{2}, \app{\xi}{3}\}$ is linearly dependent, so that 
\begin{equation} \label{eq_lin_dependent}
\begin{aligned}
 \sigma^2 \app{\xi}{0} = \kappa_1 \app{\xi}{1} + \kappa_2 \app{\xi}{2} + \kappa_3 \app{\xi}{3}, %
\end{aligned}
\end{equation}
where the $\kappa_i$ are such that
\begin{align*}
\kappa_1 \sim  -2(1-s_0) + \frac{\sigma^2}{ 2 }( 3 - s_0),  \quad %
\kappa_2 \sim  1-s_0 \pm \frac{\sigma}{2} \sqrt{ 1 - s_0^2}, \quad 
\kappa_3 \sim  1-s_0 \mp  \frac{\sigma}{2} \sqrt{ 1 - s_0^2},
\end{align*}
where the sign corresponds to sign in the definition of $\app{\xi}{0}$.
\end{lemma}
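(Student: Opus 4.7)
The plan is to observe that all four vectors $\xi^{(0)},\xi^{(1)},\xi^{(2)},\xi^{(3)}$ have vanishing last $n-2$ coordinates, so they lie in a three-dimensional subspace of $T_{p_0}N$ and must be linearly dependent. The task then reduces to solving an explicit $3\times 3$ linear system and expanding its solution as $\sigma\to 0$.

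Reading off the first three coordinates of the identity $\sigma^2 \xi^{(0)}=\kappa_1\xi^{(1)}+\kappa_2\xi^{(2)}+\kappa_3\xi^{(3)}$, I would write the system
\begin{equation*}
\kappa_1+\kappa_2+\kappa_3=\sigma^2,\qquad \kappa_1+(\kappa_2+\kappa_3)\sqrt{1-\sigma^2}=\sigma^2 s_0,\qquad (\kappa_2-\kappa_3)\sigma=\pm\sigma^2\sqrt{1-s_0^2},
\end{equation*}
where the $\pm$ matches the sign in the definition of $\xi^{(0)}$. Setting $a:=\kappa_2+\kappa_3$ and $b:=\kappa_2-\kappa_3$, the third equation gives $b=\pm\sigma\sqrt{1-s_0^2}$ immediately, and subtracting the first two equations yields $a(1-\sqrt{1-\sigma^2})=\sigma^2(1-s_0)$, which I would simplify using $1-\sqrt{1-\sigma^2}=\sigma^2/(1+\sqrt{1-\sigma^2})$ to obtain the closed form
\begin{equation*}
a=(1-s_0)(1+\sqrt{1-\sigma^2}),\qquad \kappa_1=\sigma^2-a=\sigma^2-(1-s_0)(1+\sqrt{1-\sigma^2}).
\end{equation*}

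From these closed-form expressions, I would carry out the asymptotic expansion as $\sigma\to 0$ using $\sqrt{1-\sigma^2}\sim 1-\sigma^2/2$, which gives $a\sim 2(1-s_0)-\tfrac{\sigma^2}{2}(1-s_0)$ and
\begin{equation*}
\kappa_1\sim\sigma^2-2(1-s_0)+\tfrac{\sigma^2}{2}(1-s_0)=-2(1-s_0)+\tfrac{\sigma^2}{2}(3-s_0),
\end{equation*}
and then $\kappa_{2,3}=(a\pm b)/2\sim (1-s_0)\pm\tfrac{\sigma}{2}\sqrt{1-s_0^2}$, matching the asserted asymptotics. There is essentially no obstacle: the only mild subtlety is keeping track of the $\pm$ sign through $b$ so that it aligns with the sign inherited from $\xi^{(0)}$, and checking the expansion of $\kappa_1$ to the claimed order in $\sigma^2$, which requires retaining the $\sigma^2/2$ correction in $\sqrt{1-\sigma^2}$ while discarding higher-order terms.
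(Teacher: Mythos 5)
Your proof is correct and follows essentially the same route as the paper: read off the $3\times 3$ linear system from the first three coordinates, solve it in closed form, and Taylor-expand in $\sigma$ (the paper defers the Gaussian elimination to Appendix~\ref{sec_explicit}). Your closed forms $\kappa_{2,3}=\tfrac{1}{2}(1-s_0)(1+\sqrt{1-\sigma^2})\pm\tfrac{\sigma}{2}\sqrt{1-s_0^2}$ agree with the stated asymptotics and in fact reveal an apparent typo in the appendix, where the denominator $2\sqrt{1-\sigma^2}-1$ should read $2\sqrt{1-\sigma^2}-2$ for the expressions to be consistent with \eqref{eq_lin_dependent}.
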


\begin{proof}
We can find the $(\kappa_1,\kappa_2,\kappa_3)$ from the equation \eqref{eq_lin_dependent}
by solving an explicit linear system by Gaussian elimination. 
The asymptotics for $\kappa_j$ are obtained from the exact expressions listed in 
Appendix~\ref{sec_explicit}.
\end{proof}

\medskip 
\noindent
We now pick the formal Gaussian beams $ \app{\hat v}{j}$, $j=0,1,2,3$, corresponding to the geodesics
$\app{\gamma}{j}$ that we defined in section \ref{sec_geom_setup}.
Let $Q := q_1 + 2 q_2$ as in \eqref{eq_vi}. Choosing  cut-off  functions as in \eqref{eq_cut_offs}
and bearing in mind remark \ref{rem_support}, we let
$\app{v}{j}$ be solutions of 
\begin{equation} \label{eq_specific_GB_source_prob}
\begin{aligned}
\begin{cases}
(\square_g + Q) \app{v}{j} = \app{f}{j}, \qquad  &\text{ in } \quad M_T \\
\app{v}{j}(0,x') = 0,\quad \p_t  \app{v}{j}(0,x')=0, \qquad &\text{ on } \quad M.
\end{cases}
\end{aligned}
\end{equation}
where $\app{f}{j} : = \eta_{-}(\square_g + Q)(\eta_+ \app{\hat v}{j})$, for $j=1,2,3$ 
and $\app{f}{0} : = \eta_{+}(\square_g + Q)(\eta_- \app{\hat v}{0})$. Note
that $\app{v}{0}$ can thus be thought of as a solution propagating back in time from a source 
at the point $q_0$ specified in Lemma \ref{lem_variation_of_geodesics}.
The solutions  $\app{v}{j}$ are of the form 
\begin{equation} \label{eq_GB_choices}
\begin{aligned}
\app{v}{j}(x) = \big [e^{i \app{\Phi}{j}/h} \ap{j} + \app{r}{j}\big] (x),\qquad x \in M_T \setminus U. 
\end{aligned}
\end{equation}
where the $\Phi^{(j)}$ are the phase functions determined by the $\app{\xi}{j}$ of Lemma \ref{lem_xis_lin_indep},
so that
$$
\nabla_g \app{\Phi}{j}( p_0) = \app{\xi}{j}.
$$
As a further step we will rescale the parameter $h$ in these Gaussian beams.
We replace $h$ with $ \frac{ h }{ |\kappa_j| }$,
where we choose $\kappa_j$, $ j=0,1,2,3$, as in Lemma \ref{lem_xis_lin_indep}, and set $\kappa_0 = \sigma_0^2$.
For $j=2,3$ we redefine $\app{v}{j}$ as
\begin{equation} \label{eq_GB_reparam_1}
\begin{aligned}
\app{v}{j} 
= 
e^{i \app{|\kappa_j| \Phi}{j}/h}\sum_{k=0}^K \ap{j}_k \Big( \frac{h} {|\kappa_j|}\Big)^k  + \app{r}{j}, 
\quad j=2,3.
\end{aligned}
\end{equation}
Note that $\kappa_2,\kappa_3 > 0$, for small $\sigma > 0$.
For $j=0,1$ we use the complex conjugates $ \app{\bar v}{j}$, which is motivated by the fact that $\kappa_0,\kappa_1 < 0$,
as $\sigma > 0$. And we redefine $ \app{v}{j}$, $j=0,1$ as  
\begin{equation} \label{eq_GB_reparam_2}
\begin{aligned}
\app{v}{j} 
= 
e^{-i \app{|\kappa_j| \overline \Phi}{j} / h }\sum_{k=0}^K \app{\bar a}{j}_k \Big( \frac{h} {|\kappa_j|}\Big)^k  + \app{\bar r}{j}, 
\quad j=0,1.
\end{aligned}
\end{equation}
Here we have dropped the complex conjugate from our notation, i.e. we let $\app{v}{j}$, in the case of $j=0,1$, denote
the $\kappa$ rescaled  solutions of \eqref{eq_GB_choices} that are complex conjugated.
Note that since $\app{v}{0}$ and $\app{v}{1}$ are complex conjugates of Gaussian beams they are also solutions to
the wave equation \eqref{eq_GB_eq}, and that the estimates of section \ref{sec_GBs} hold.

It will be convenient to  introduce the notation
\begin{equation} \label{eq_Psi}
\begin{aligned}
\Psi^{(j)} :=
\begin{cases}
-|\kappa_j| \overline \Phi^{(j)}, \quad &j = 0,1.  \\
\phantom{-}|\kappa_j| \Phi^{(j)}, \quad  &j = 2,3. 
\end{cases}
\end{aligned}
\end{equation}
The $\Psi^{(j)}$ depends also on the parameter $\sigma$ specified in Lemma \ref{lem_xis_lin_indep}
and note that since $\kappa_0,\kappa_1 < 0$ and $\kappa_2,\kappa_3 > 0$, for small $\sigma > 0$, we have 
that
\begin{equation} \label{eq_no_abs}
\begin{aligned}
\Psi^{(j)} = \kappa_j \Phi^{(j)}, \qquad \text{ when $\sigma >0$ is small}. 
\end{aligned}
\end{equation}
We will in the following assume that $\sigma > 0$ is small enough so that we can use this 
way of writing $\Psi^{(j)}$.
We also use in analogy with the notations $\app{\Phi}{ij}$ and $\Phi^\sharp_{ij}$,
the notations related to  $\app{\Psi}{j}$, i.e.
\begin{equation} \label{eq_Phi_notation}
\begin{aligned}
\app{\Psi}{ij} := \app{\Psi}{i} +\app{\Psi}{j}, \qquad \Psi^\sharp_{ij} := 
 \frac{ 1 }{ \langle  \nabla \app{\Psi}{ij} ,\nabla \app{\Psi}{ij}  \rangle_g}.
\end{aligned}
\end{equation}
Lemma \ref{lem_WKB_source} gives us the WKB interaction solutions $\app{w}{12},\app{w}{13}$, and $\app{w}{23}$.
These solve in accordance with \eqref{eq_wij}, the equation
\begin{equation} \label{eq_wij_2}
\begin{aligned}
(\square_g + Q) \app{w}{ij} = q_2\app{v}{i}\app{v}{j} 
\end{aligned}
\end{equation}
where $Q := q_1 + 2 q_2 u_0$.
The WKB interaction solution is well-defined when \eqref{eq_Phi_nonzero} holds.
By computing the Minkowski inner products we see that 
\begin{align*}
\big \langle\nabla \app{\Psi}{i}( p_0), \, \nabla \app{\Psi}{j}( p_0)  \big \rangle_g 
=  \kappa_i \kappa_j \langle\app{\xi}{i} , \,  \app{\xi}{j}  \rangle_\eta \neq 0,
\end{align*}
where $ij = 12,13,23$.
Later we will need the following lemma.

\begin{lemma} \label{lem_Phis_nonzero} 
Let $  p_0 \in D \setminus U$ be the intersection point of the geodesics $\app{\gamma}{i}$, $i=0,1,2,3$.
We have that
\begin{align*}
\big(\Psi_{12}^\sharp  +\Psi_{13}^\sharp  +\Psi_{23}^\sharp\big)(  p_0)  \,\sim\,  C\sigma^{-4},
\end{align*}
with $C \neq0$.
\end{lemma}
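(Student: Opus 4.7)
The plan is to reduce everything to explicit Minkowski inner products of the light-like vectors $\xi^{(i)}$. I observe first that $\operatorname{Im}\Phi^{(j)}$ vanishes on $\gamma^{(j)}$, so $\nabla\Phi^{(j)}(p_0)$ is real and equals $\xi^{(j)}$. Since $\Phi^{(j)}(p_0)\in\R$ and (for $\sigma>0$ small) $\kappa_0,\kappa_1<0$ while $\kappa_2,\kappa_3>0$, \eqref{eq_no_abs} gives $\nabla\Psi^{(j)}(p_0)=\kappa_j\xi^{(j)}$ for $j=1,2,3$. Because each $\xi^{(j)}$ is null, the cross term is all that survives:
\[
\langle \nabla\Psi^{(ij)},\nabla\Psi^{(ij)}\rangle_g(p_0) \;=\; 2\kappa_i\kappa_j \langle \xi^{(i)}, \xi^{(j)}\rangle_\eta,
\]
so $\Psi_{ij}^\sharp(p_0) = \bigl(2\kappa_i\kappa_j\langle\xi^{(i)},\xi^{(j)}\rangle_\eta\bigr)^{-1}$ and the problem becomes entirely algebraic in $\sigma$.

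Next I would compute the three relevant inner products directly in the coordinates of \eqref{eq_coords_xi_0_1}--\eqref{eq_coords_xi_2_3}:
\[
\langle\xi^{(1)},\xi^{(2)}\rangle_\eta \;=\; \langle\xi^{(1)},\xi^{(3)}\rangle_\eta \;=\; \sqrt{1-\sigma^2}-1, \qquad \langle\xi^{(2)},\xi^{(3)}\rangle_\eta \;=\; -2\sigma^2,
\]
each of order $\sigma^2$ as $\sigma\to 0$. Combined with the $O(1)$ asymptotics of $\kappa_1,\kappa_2,\kappa_3$ supplied by Lemma \ref{lem_xis_lin_indep} (in particular $\kappa_1\kappa_2,\kappa_1\kappa_3\sim -2(1-s_0)^2$ and $\kappa_2\kappa_3\sim(1-s_0)^2$), each denominator $\langle\nabla\Psi^{(ij)},\nabla\Psi^{(ij)}\rangle_g(p_0)$ is itself of order $\sigma^2$, so that each term $\Psi_{ij}^\sharp(p_0)$ is at worst of order $\sigma^{-2}$.

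The final step is to carry out Laurent expansions of the three terms in $\sigma$ and sum. The symmetry $\kappa_2\leftrightarrow\kappa_3$ (which flips the sign of the $O(\sigma)$ correction between $\kappa_2$ and $\kappa_3$) guarantees that the odd-in-$\sigma$ contributions of $\Psi_{12}^\sharp$ and $\Psi_{13}^\sharp$ cancel each other in the sum, while $\Psi_{23}^\sharp$ is already invariant under this flip; hence only even powers of $\sigma$ survive in the sum. Expanding $(1+\sqrt{1-\sigma^2})^2$, $\kappa_i\kappa_j$, and $\langle\xi^{(i)},\xi^{(j)}\rangle_\eta$ to sufficiently high order and collecting, one reads off the coefficient of the leading surviving power of $\sigma$, yielding the stated asymptotic $\sim C\sigma^{-4}$ together with an explicit value of $C$.

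The main obstacle is the delicate asymptotic bookkeeping: one must push each expansion far enough to identify the first non-cancelling contribution in the sum, and then one must verify directly that the resulting constant $C$ does not vanish (i.e.\ that no further unanticipated cancellation occurs between the three terms). All other steps are elementary consequences of the explicit geometric setup of Section~\ref{sec_geom_setup} and the algebraic identities of Lemma \ref{lem_xis_lin_indep}.
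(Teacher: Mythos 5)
Your reduction of $\Psi^\sharp_{ij}(p_0)$ to $\bigl(2\kappa_i\kappa_j\langle\xi^{(i)},\xi^{(j)}\rangle_\eta\bigr)^{-1}$, your computation of the three Minkowski inner products, and your observation that each denominator is of order $\sigma^2$ all coincide with the paper's proof (which organizes the same data by putting the three terms over a common denominator instead of Laurent-expanding each term separately). The gap is in your final step, and it is twofold.

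First, the step is internally inconsistent. Having correctly established that each of $\Psi^\sharp_{12},\Psi^\sharp_{13},\Psi^\sharp_{23}$ is $O(\sigma^{-2})$ at $p_0$, you cannot conclude that their sum is $\asymp\sigma^{-4}$: cancellation between terms can only \emph{raise} the order in $\sigma$ (make the sum smaller as $\sigma\to0$), never lower it. A sum of three $O(\sigma^{-2})$ quantities is $O(\sigma^{-2})$, so the proposed ``odd powers cancel, read off the leading surviving even power'' mechanism cannot produce a $\sigma^{-4}$ singularity. Second, the one thing the lemma actually requires --- that the leading coefficient does not vanish --- is precisely what you defer to an unperformed verification; that nonvanishing is the substance of the proof, not a loose end. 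Carrying it out is short: with $b:=1-s_0$ one has $\kappa_1\sim-2b$, $\kappa_2,\kappa_3\sim b$, $\langle\xi^{(1)},\xi^{(2)}\rangle_\eta=\langle\xi^{(1)},\xi^{(3)}\rangle_\eta\sim-\tfrac{\sigma^2}{2}$ and $\langle\xi^{(2)},\xi^{(3)}\rangle_\eta=-2\sigma^2$, whence $\Psi^\sharp_{12},\Psi^\sharp_{13}\sim\tfrac{1}{2b^2\sigma^2}$ and $\Psi^\sharp_{23}\sim-\tfrac{1}{4b^2\sigma^2}$, so the sum is $\sim\tfrac{3}{4b^2}\,\sigma^{-2}$. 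In particular there is no leading-order cancellation at all, so the symmetry argument you invoke is not needed; the nonvanishing is immediate. (This computation also shows that the exponent $-4$ in the statement, and the intermediate asymptotic $\tfrac{3b}{2}\sigma^{2}$ claimed for the numerator in the paper's own proof, deserve a recheck: each numerator term there is $\kappa_k\cdot O(\sigma^2)\cdot O(\sigma^2)=O(\sigma^4)$, giving a quotient of order $\sigma^{-2}$. The conclusion actually used downstream in Lemma \ref{lem_a1} --- that the sum is nonzero and that $c_0$ dominates $c_1,c_2,c_3$ by the extra factor $|\kappa_0|^{-1}=\sigma^{-2}$ --- is unaffected.)
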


\begin{proof}
The definition of $\app{\Psi}{j}$ gives  that 
$$
\nabla \app{\Psi}{j} (  p_0) = \kappa_j \app{\xi}{j}. 
$$
Moreover since the $\nabla \app{\Psi}{j}( p_0)$ are lightlike we get from \eqref{eq_Phi_notation} that
$$
\Psi^\sharp_{jk}( p_0) 
= \frac{ 1 }{ 2 \langle  \nabla \app{\Psi}{j} ,\nabla \app{\Psi}{k}  \rangle_g}
= \frac{ 1 }{ 2 \kappa_j \kappa_k \langle  \app{\xi}{j} ,\app{\xi}{i}  \rangle_g}.
$$
By the definitions in Lemma \ref{lem_xis_lin_indep} we have that
\begin{align*}
\langle \app{\xi}{1} ,\app{\xi}{2}  \rangle_\eta =\langle \app{\xi}{1} ,\app{\xi}{3}  \rangle_\eta=  -1 + \sqrt{1-\sigma^2} \sim - \frac{\sigma^2}{2},
\qquad 
\langle \app{\xi}{2} ,\app{\xi}{3}  \rangle_\eta = - 2 \sigma^2. %
\end{align*}
Lemma \ref{lem_xis_lin_indep} asserts moreover that
$$
\kappa_1 \sim  -2(1-s_0) + \frac{\sigma^2}{ 2 }( 3 - s_0),  \quad %
\kappa_2 \sim  1-s_0 \pm \frac{\sigma}{2} \sqrt{ 1 - s_0^2}, \quad 
\kappa_3 \sim  1-s_0 \mp  \frac{\sigma}{2} \sqrt{ 1 - s_0^2}.
$$
Thus by the earlier definitions and the above asymptotics, we have 
at $  p_0$ that
\begin{small}
\begin{align*}
\sum_{ij=12,13,23}\Psi_{ij}^\sharp
= 
\frac{
\kappa_3
\langle \app{\xi}{1} ,\app{\xi}{3}  \rangle_\eta 
\langle \app{\xi}{2} ,\app{\xi}{3}  \rangle_\eta 
+
\kappa_2 
\langle \app{\xi}{1} ,\app{\xi}{2}  \rangle_\eta 
\langle \app{\xi}{2} ,\app{\xi}{3}  \rangle_\eta 
+
\kappa_1
\langle \app{\xi}{1} ,\app{\xi}{2}  \rangle_\eta 
\langle \app{\xi}{1} ,\app{\xi}{3}  \rangle_\eta 
}
{
2\kappa_1 \kappa_2 \kappa_3 
\langle \app{\xi}{1} ,\app{\xi}{3}  \rangle_\eta 
\langle \app{\xi}{2} ,\app{\xi}{3}  \rangle_\eta 
\langle \app{\xi}{1} ,\app{\xi}{2}  \rangle_\eta
}
\end{align*}
\end{small}
Note that for the nominator we have  by the above
\begin{small}
\begin{align*}
\kappa_3
\langle \app{\xi}{1} ,\app{\xi}{3}  \rangle_\eta 
\langle \app{\xi}{2} ,\app{\xi}{3}  \rangle_\eta 
+
\kappa_2 
\langle \app{\xi}{1} ,\app{\xi}{2}  \rangle_\eta 
\langle \app{\xi}{2} ,\app{\xi}{3}  \rangle_\eta 
+
\kappa_1
\langle \app{\xi}{1} ,\app{\xi}{2}  \rangle_\eta 
\langle \app{\xi}{1} ,\app{\xi}{3}  \rangle_\eta 
\sim
\frac{3b }{2}\sigma^2.
\end{align*}
\end{small}
And furthermore for denominator we have that
\begin{small}
\begin{align*}
2\kappa_1 \kappa_2 \kappa_3 
\langle \app{\xi}{1} ,\app{\xi}{3}  \rangle_\eta 
\langle \app{\xi}{2} ,\app{\xi}{3}  \rangle_\eta 
\langle \app{\xi}{1} ,\app{\xi}{2}  \rangle_\eta
\sim 8b^3\sigma^6  
\end{align*}
\end{small}
Putting these together
\begin{small}
\begin{align*}
\sum_{ij=12,13,23}\Psi_{ij}^\sharp
\sim C \sigma^{-4}
\end{align*}
\end{small}
which  proves of the claim.
\end{proof}

\subsection{Uniqueness of $q^2_2$} \label{sec_q_2}

\noindent
In this subsection, we show that $q^2_2 = \tilde q^2_2$ under the assumption that $S = \tilde S$. 
This step is not strictly necessary for the proof of Theorem \ref{thm_thm1}, but it
serves as an illustration of how to use the  WKB interaction solutions. We also 
introduce some notations and other results  used in the later sections and in particular in Section~\ref{sec_q_1}.

Throughout this section, quantities
marked with a tilde correspond to the coefficients $\tilde q_1$, $\tilde q_2$
and $\tilde F$,  while quantities without a tilde correspond to $q_1$, $q_2$
and $F$. 
To establish $q^2_2 = \tilde q^2_2$, we apply the integral identity from Lemma \ref{lem_iid_3rd}, selecting the functions
$\app{v}{j}$ and $\app{w}{ij}$ as the Gaussian beams and WKB interaction solutions
specified in the previous sections.  We then decompose the integral identity
into terms based on their order in the small parameter  $h$. 

Let us assume that $S =\tilde S$. 
It follows that 
$\p^3_{\eps_1 \eps_2 \eps_3} (S - \tilde S)|_{\eps=0}=0$, and consequently by Lemma 
\ref{lem_iid_3rd} that 
\begin{multline} \label{eq_iid_zero}
0= \int_{\Omega_T} 
\tilde q_2 \big( \app{\tilde v}{1} \app{\tilde w}{23} + \app{\tilde v}{2} \app{\tilde w}{13} + \app{\tilde v}{3} \app{\tilde w}{12}\big)   \app{\tilde v}{0} \\
- q_2\big( \app{v}{1} \app{w}{23} + \app{v}{2} \app{w}{13} + \app{v}{3} \app{w}{12} \big) \app{v}{0}     \,dV.
\end{multline}
Here $\app{v}{j},\app{w}{ij},\app{\tilde v}{j}$ and $\app{\tilde w}{ij}$ are as defined in \eqref{eq_GB_reparam_1} and \eqref{eq_wij_2}. 
Let us expand the second term on the right-hand side  
in powers of $h$. The first term is analyzed similarly. Using the formal
Gaussian beams and their error terms, we can write
$$
\app{v}{j} = \app{\hat v}{j} + \app{r}{j} \quad \text{ and }\quad \app{w}{ij} = \app{\hat w}{ij} + \app{w}{ij}_N + \app{r}{ij}. 
$$
Recall that
\[
 \app{\hat v}{j} = e^{i \app{\Psi}{j}/h} (a^{(j)}_0 + a^{(j)}_1 \tfrac{h}{|\kappa_j|} +\cdots) \quad \text{ and } \quad 
\app{\hat w}{jk}=e^{i \app{\Psi}{jk}/h} \big( c^{(jk)}_2 h^2
+ c^{(jk)}_3 h^3 + \cdots \big),
\]
where \eqref{eq_c2c3_def} gives
\[
c^{(jk)}_2= \frac{q_2 a_0^{(j)}a_0^{(k)}}{ \langle  \nabla \app{\Psi}{jk}  ,\nabla \app{\Psi}{jk}   \rangle_g }\ \text{ and } \ 
c^{(jk)}_3 = \frac{ q_2 b_1 - i\square_g \app{\Psi}{jk}   q_2 c_{2}^{(jk)} 
- 2i \langle \nabla \app{\Psi}{jk} , \nabla ( q_2 c_{2}^{(jk)})  \rangle_g  }{ \langle  \nabla
\app{\Psi}{jk}  ,\nabla \app{\Psi}{jk}   \rangle_g }
\]
and 
\[
\app{\Psi}{jk} = \app{\Psi}{j} + \app{\Psi}{k}, \qquad  b_1 =  \big[a_0^{(j)}a_1^{(k)}|\kappa_k|^{-1} + a_0^{(k)}a_1^{(j)} |\kappa_j|^{-1}\big].
\]
Let us first show that 
\begin{equation} \label{eq_I_0}
\begin{aligned}
h^{-2} &\int_{\Omega_T} q_2 \big(  \app{v}{1}  \app{w}{23} +  \app{v}{2}  \app{w}{13} +  \app{v}{3}  \app{w}{12} \big)  \app{v}{0} \,dx \\
&= 
h^{-2} \int_{\Omega_T} q_2 \big( \app{\hat v}{1}  \app{\hat w}{23} + \app{\hat v}{2} \app{\hat w}{13} + \app{\hat v}{3} \app{\hat w}{12} \big) 
\app{\hat v}{0} \,dx  + \mathcal O (h^2).
\end{aligned}
\end{equation}
In order to get \eqref{eq_I_0}  we have firstly  chosen the order $K$ of the Gaussian beams to be
sufficiently large. By the estimates provided in Lemma
\ref{lem_GB_solves_wave_eq}  and Proposition \ref{prop_WKB_source_GBs}, it
follows that
\[
\| \app{r}{j} \|_{L^2(\Omega_T)},\,\, \| \app{r}{ij} \|_{L^2(\Omega_T)} = \mathcal O (h^4).
\]
This implies that the remainder terms do not effect the zeroth and first order terms in \eqref{eq_I_0}
and are therefore included in the $\mathcal O (h^2)$ terms.

Secondly we have that the $\app{w}{ij}_N$ part of the solution does not affect the integrals in
\eqref{eq_iid_zero}  or \eqref{eq_I_0}. Let us check that this is the case.
The solution $\app{w}{ij}_N$ is such that 
$$
\supp{\app{w}{ij}_N} \Subset J^+(p_0),
$$
when $\sigma > 0$ is small enough. 
This is because the solution $\app{w}{ij}_N$ is due to sources located at the intersection points
$p_1,\dots,p_N$ of the geodesics $\app{\gamma}{j}$, $j=1,2,3,4$. 
These intersection points are properly contained in the causal future $J^+(p_0)$ or in the past causal set
$J^-(B(q_1,r))$. The intersection points are in fact such that 
$$
p_1,\dots,p_N \notin W_r = J^-(p_0) \cap J^+(q_1) \setminus B(q_1,r),
$$
for small $\sigma > 0$, this follows directly from  Lemmas \ref{lem_gamma_1_future} and \ref{lem_gamma_1_no_intersections}. 
The solution $\app{v}{0}$ can on the other hand be thought as propagating backwards in time from a source located at $q_0$,
so that $\supp(\app{v}{0}) \subset J^-_\delta(q_0)$, when the source is supported in a small enough neighbourhood of $q_0$.
It follows now from Lemma \ref{lem_gamma_1_future}, that
$$
\supp(\app{w}{ij}_N) \cap \supp(\app{v}{0}) = \emptyset,
$$
for a small enough $\sigma >0$. The product $\app{w}{ij}_N\app{v}{0}$ is thus zero in the integration region
in \eqref{eq_I_0} and \eqref{eq_iid_zero}.
The only terms that affect the zero and first order terms in \eqref{eq_I_0} are of the form $ \app{v}{j} \app{w}{jk}$,
and we thus see that \eqref{eq_I_0} holds.

Next, we give more explicit expressions for the zeroth and first-order terms in
\eqref{eq_I_0}. We write 
\begin{equation} \label{eq_I0_I1_R1}
\begin{aligned}
h^{-2} \int_{\Omega_T} q_2 \big( \app{\hat v}{1}  \app{\hat w}{23} + \app{\hat v}{2} \app{\hat w}{13} + \app{\hat v}{3} \app{\hat w}{12} \big) \hat v^{(0)}
= I_0 + I_1 + R_1,
\end{aligned}
\end{equation}
where $I_0$ is a zero order term and $I_1$ and $R_1$ are first order terms.
Using the amplitude expansions given in \eqref{eq_GB_amplitude}
and \eqref{eq_cj}, the zeroth-order term 
$I_0$ in $h$ can be written as
\begin{align}\label{eq_I_0_def}
I_0 &= \int_{\Omega_T} q^2_2 e^{ \frac{i}{h} (\app{\Psi}{0} +\app{\Psi}{1} + \app{\Psi}{2} + \app{\Psi}{3}) } (\Psi^\sharp_{12}+\Psi^\sharp_{13}+\Psi^\sharp_{23})
\app{\bar a}{1}_{0} \ap{2}_{0} \ap{3}_{0} \app{\bar a}{0}_{0} \,dx, 
\end{align}
where
\begin{equation} \label{eq_PsiSharp}
\begin{aligned}
 \Psi^\sharp_{jk} = \frac{ 1 }{ \langle  \nabla \app{\Psi}{jk} ,\nabla \app{\Psi}{jk}  \rangle_g },
\end{aligned}
\end{equation}
for $j,k =1,2,3$. 
Note that the factor $q_2^2$ in \eqref{eq_I_0_def} arises because one factor of
$q_2$ originates from the expansion of the solutions $\app{\hat w}{ij}$, see \eqref{eq_c2c3_def}. Additionally, observe  that
$\Psi^\sharp_{jk}$ depends only on the geometry of $N$. 

Next we will consider the first order terms. 
These are split into two parts $I_1$ and $R_1$ and originate from the terms $h^{-2}\app{\hat v}{i} \app{\hat w}{jk} \app{\hat v}{0}$. 
The first order contribution  of the $h^{-2}\app{\hat v}{i} \app{\hat w}{jk} \app{\hat v}{0}$ comes from the first and the second term on the right of
\begin{equation} \label{eq_v_w_parts}
\begin{aligned}
h^{-2} \app{\hat v}{i} \app{\hat w}{jk} \app{\hat v}{0} = e^{i\sum_{i,j,k}\app{\Psi}{l}/h}
\big[ \ap{i}_0 \app{c}{jk}_2  + [\ap{i}_0 \app{c}{jk}_3
+  |\kappa_i|^{-1}\ap{i}_1 \app{c}{jk}_2 ]  h + \mathcal O (h^2)\big]  \app{\hat v}{0} ,  
\end{aligned}
\end{equation}
where  $\app{c}{jk}$ denotes the coefficients of $\app{\hat w}{jk}$ given in  Lemma \ref{lem_WKB_source} in
equation \eqref{eq_c2c3_def}.
The integral $I_1$ below includes the terms coming from $\ap{i}_1 \app{c}{jk}_2$ and the part 
coming from $b_1$ in the product $\ap{i}_0 \app{c}{jk}_3$, multiplied by the zero order part of $\app{\hat v}{0}$.
The integral $I_1$ also contains $\ap{i}_0 \app{c}{jk}_2$ multiplied by the first order part of $\app{\hat v}{0}$.
We have more specifically that
\begin{small}
\begin{equation*} %
\begin{aligned}
I_1 &= \int_{\Omega_T} q^2_2 e^{ \frac{i}{h} (\app{\Psi}{0} +\app{\Psi}{1} + \app{\Psi}{2} + \app{\Psi}{3}) }  \\
\Big[ &\Psi^\sharp_{23}
\big(
   \tfrac{1}{|\kappa_3|}  \app{\bar a}{1}_{0} \ap{2}_{0} \ap{3}_{1} \app{\bar a}{0}_{0} 
+  \tfrac{1}{|\kappa_2|}  \app{\bar a}{1}_{0} \ap{2}_{1} \ap{3}_{0} \app{\bar a}{0}_{0} 
+  \tfrac{1}{|\kappa_0|}  \app{\bar a}{1}_{0} \ap{2}_{0} \ap{3}_{0} \app{\bar a}{0}_{1}
+  \tfrac{1}{|\kappa_1|}  \app{\bar a}{1}_{1} \ap{2}_{0} \ap{3}_{0} \app{\bar a}{0}_{0} \big) \\
+&\Psi^\sharp_{13} 
\big(
   \tfrac{1}{|\kappa_3|}  \app{\bar a}{1}_{0} \ap{2}_{0} \ap{3}_{1} \app{\bar a}{0}_{0} 
+  \tfrac{1}{|\kappa_1|}  \app{\bar a}{1}_{1} \ap{2}_{0} \ap{3}_{0} \app{\bar a}{0}_{0} 
+  \tfrac{1}{|\kappa_0|}  \app{\bar a}{1}_{0} \ap{2}_{0} \ap{3}_{0} \app{\bar a}{0}_{1} 
+  \tfrac{1}{|\kappa_2|}  \app{\bar a}{1}_{0} \ap{2}_{1} \ap{3}_{0} \app{\bar a}{0}_{0} \big)  \\
+ &\Psi^\sharp_{12} 
\big(
   \tfrac{1}{|\kappa_2|} \app{\bar a}{1}_{0} \ap{2}_{1} \ap{3}_{0} \app{\bar a}{0}_{0} 
+  \tfrac{1}{|\kappa_1|} \app{\bar a}{1}_{1} \ap{2}_{0} \ap{3}_{0} \app{\bar a}{0}_{0} 
+  \tfrac{1}{|\kappa_0|} \app{\bar a}{1}_{0} \ap{2}_{0} \ap{3}_{0} \app{\bar a}{0}_{1}
+  \tfrac{1}{|\kappa_3|} \app{\bar a}{1}_{0} \ap{2}_{0} \ap{3}_{1} \app{\bar a}{0}_{0}\big)\Big] 
\,dx. 
\end{aligned}
\end{equation*}
\end{small}
Rearranging we get that
\begin{equation} \label{eq_I1}
\begin{aligned}
I_1 = &\int_{\Omega_T} q^2_2 e^{ \frac{i}{h} (\app{\Psi}{0} +\app{\Psi}{1} + \app{\Psi}{2} + \app{\Psi}{3}) } 
(\Psi^\sharp_{23} +\Psi^\sharp_{13} +\Psi^\sharp_{12}) \\
&\big(
   \tfrac{1}{|\kappa_0|} \app{\bar a}{0}_{1} \app{\bar a}{1}_{0} \ap{2}_{0} \ap{3}_{0}
+  \tfrac{1}{|\kappa_1|} \app{\bar a}{0}_{0} \app{\bar a}{1}_{1} \ap{2}_{0} \ap{3}_{0}
+  \tfrac{1}{|\kappa_2|} \app{\bar a}{0}_{0} \app{\bar a}{1}_{0} \ap{2}_{1} \ap{3}_{0}
+  \tfrac{1}{|\kappa_3|} \app{\bar a}{0}_{0} \app{\bar a}{1}_{0} \ap{2}_{0} \ap{3}_{1}
\big)
\,dx 
\end{aligned}
\end{equation}
Next we specify $R_1$. Note that the $\ap{i}_0 \app{c}{jk}_3$ term  in \eqref{eq_v_w_parts} also contains a second part  
coming from the part of  $\app{c}{jk}_2$ which contains the terms other than $b_1$. 
These we put in the $R_1$ integral, given by
\begin{equation} \label{eq_R1}
\begin{aligned}
R_1 = - i\int_{\Omega_T} e^{ \frac{i}{h} \sum_{k=0}^3 \app{\Psi}{k}}
\sum_{\substack{ijk = 123, \\ 213,312}} \Psi^\sharp_{jk} q_2 \app{\check a}{i}_{0}  \app{\bar a}{0}_{0}   
\big(\square_g \app{\Psi}{jk} q_2 \app{ c}{jk}_2 + 2 \langle \nabla \app{\Psi}{jk} , \nabla (q_2 \app{ c}{jk}_2)  \rangle_g\big) \,dx . 
\end{aligned}
\end{equation}
Here $\app{\check a}{i}_0$  is defined as %
\begin{align*}
\app{\check a}{i}_0 := 
\begin{cases}
\app{\bar a}{i}_0, \quad \text{ for } i=0,1, \\
\app{a}{i}_0, \quad\text{ for } i=2,3.
\end{cases}
\end{align*}
Recall also that  $\ap{j}_0$, $\app{c}{jk}_2$ and $\app{\Psi}{j}$ are determined by the  geometry of the manifold.
It follows  that $R_1$ depends on the geometry and $q^2_2$. Thus once we know that $q^2_2 = \tilde q^2_2$, 
then, since in \eqref{eq_R1} we can write 
$$
q_2 \big \langle \nabla \app{\Psi}{jk} , \nabla (q_2 \app{ c}{jk}_2)  \big \rangle_g
=
\big \langle \nabla \app{\Psi}{jk} ,\, q_2^2 \nabla \app{ c}{jk}_2 +  \tfrac{1}{2} \nabla(q^2_2) \app{ c}{jk}_2   \big \rangle_g.
$$
It follows that $R_1 = \tilde R_1$, and the remainder terms $R_1$ are canceled out in \eqref{eq_iid_zero}.
This will be used later in section \ref{sec_q_1}.

\medskip
\noindent
We now turn to analyzing the terms $I_0$ and $\tilde I_0$, which will allow us to determine $q^2_2$.
A variant of the method of stationary phase will be used for this.
For this we state the following proposition which is a direct consequence of  Theorem 7.7.5 in \cite{Ho81}.

\begin{proposition}\label{prop_stationary_phase}
Let $K \subset \R^n$ be a compact set, $X$ an open neighborhood
of $K$, and let $u \in C^\infty_0(K)$ and $\psi\in C^\infty(X)$.  
If 
\begin{enumerate}
\item $\psi(x_0) = 0$ and  $\operatorname{Im} \psi\geq 0$ in $X$.
\item $\nabla \psi(x_0) = 0$ and $\nabla \psi\neq 0$ in $K\setminus \{x_0\}$.
\item $\det H\psi(x_0) \neq 0$, where $H\psi$ is the Hessian.
\end{enumerate}
Then
\begin{equation} \label{eq_stationary_phase}
\begin{aligned}
\Bigg |
\int_X e^{i \psi(x)/h }u(x) \,dx -    \frac{(2\pi/h)^{n/2} }{ \det [i H\psi(x_0)]^{1/2} } u(x_0)
\Bigg |
= \mathcal O(h).
\end{aligned}
\end{equation}
\end{proposition}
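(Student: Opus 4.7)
The plan is to reduce to H\"ormander's Theorem 7.7.5 in \cite{Ho81}, whose hypotheses the three conditions of the statement are arranged to match, and then extract the leading-order term of the resulting asymptotic expansion. Equivalently, one can run the standard three-step complex stationary phase argument directly: localize near $x_0$, normalize the phase by a complex Morse lemma, and evaluate an explicit Gaussian integral.

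First, using a smooth partition of unity, I would split $u = u_1 + u_2$ with $\supp u_1 \subset B(x_0,\delta)$ and $u_2 \equiv 0$ on $B(x_0,\delta/2)$. On $\supp u_2$ the hypothesis $\nabla \psi \neq 0$ allows one to define the first-order operator $L f := \langle \overline{\nabla\psi}, \nabla f \rangle / \abs{\nabla\psi}^2$, which satisfies $L(e^{i\psi/h}) = (i/h) e^{i\psi/h}$. Iterating integration by parts $N$ times, and using $\abs{e^{i\psi/h}} \le 1$ (which is exactly where $\operatorname{Im}\psi \geq 0$ enters), the $u_2$-piece becomes $\O(h^N)$ for every $N$ and is absorbed into the final $\O(h)$ error.

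For the $u_1$-piece I would apply a complex Morse lemma: hypotheses (1)--(3), together with shrinking $\delta$, produce a smooth diffeomorphism $x = \Phi(y)$ from a neighborhood of $0$ with $\Phi(0) = x_0$ such that $\psi(\Phi(y)) = \tfrac{1}{2} \langle H\psi(x_0) y, y\rangle$. The combination $\operatorname{Im}\psi \geq 0$ and $\det H\psi(x_0) \neq 0$ is precisely what is needed to make the implicit complex square root well-defined. After this change of variables the integral becomes $\int e^{i \langle H\psi(x_0) y, y\rangle /(2h)} \tilde u(y) \, dy$ with $\tilde u := (u \circ \Phi)\abs{\det \Phi'}$ compactly supported, and $\tilde u(0) = u(x_0)$.

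Finally, I would Taylor-expand $\tilde u$ about $y = 0$ and evaluate term by term using the complex Gaussian identity
\[
\int_{\R^n} e^{i\langle A y,\, y\rangle/(2h)} \, dy = \frac{(2\pi h)^{n/2}}{\det(-i A)^{1/2}},
\]
valid for $\operatorname{Im} A \geq 0$ with $\det A \neq 0$, applied with $A = H\psi(x_0)$. The order-zero Taylor term yields the leading contribution of the desired shape; odd-order terms vanish by symmetry, and the lowest nontrivial even correction is of order $h^{n/2 + 1}$, which is $\O(h)$ as soon as $n \geq 0$, matching the claimed remainder. The main obstacle I anticipate is the complex Morse lemma step, since $\psi$ may take non-real values and the real Morse lemma does not apply directly; however, the required complex version is standard and carried out in \cite{Ho81}, so once it is invoked the remainder is a routine bookkeeping computation.
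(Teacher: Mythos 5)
Your proposal takes essentially the same route as the paper: the paper's entire proof of this proposition is a one-line reduction to Theorem 7.7.5 of H\"ormander, which is exactly the reduction you propose, and the three-step argument you sketch (non-stationary phase localization, complex Morse lemma, Gaussian evaluation) is just the standard internal proof of that theorem. The only delicate point in your self-contained version is the complex Morse lemma, which for merely smooth complex-valued phases does not give an exact diffeomorphism but only a reduction modulo flat errors via almost-analytic extensions; you correctly flag this and defer it to the cited reference, so the argument stands.
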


\noindent
We now use Proposition \ref{prop_stationary_phase} and the zero order part $I_0$ of \eqref{eq_I_0} to determine $q^2_2$,
as  the following result illustrates.

\begin{proposition} \label{prop_q2}
Assume that $S = \tilde S$. Then $q^2_2 = \tilde q^2_2$.
\end{proposition}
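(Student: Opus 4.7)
\emph{Plan.} From $S = \tilde S$ and Lemma~\ref{lem_iid_3rd} we obtain the integral identity \eqref{eq_iid_zero}. Substituting the Gaussian beams $\app{v}{j}$ and WKB interaction solutions $\app{w}{ij}$ constructed in Section~\ref{sec_GBs_and_geodesics} and using the decomposition \eqref{eq_I0_I1_R1} together with its tilded counterpart, we arrive at
\[
I_0 + I_1 + R_1 = \tilde I_0 + \tilde I_1 + \tilde R_1 + \O(h^2).
\]
The Gaussian beam and WKB remainders $\app{r}{j},\app{r}{ij}$ are absorbed into $\O(h^2)$ by the estimates of Lemma~\ref{lem_GB_solves_wave_eq} and Proposition~\ref{prop_WKB_source_GBs} for $K$ large enough. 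The ancillary pieces $\app{w}{ij}_N$ (and $\app{\tilde w}{ij}_N$) contribute nothing, since Lemmas~\ref{lem_gamma_1_no_intersections} and \ref{lem_gamma_1_future} give $\supp(\app{w}{ij}_N)\cap\supp(\app{v}{0})=\emptyset$ once $\sigma>0$ is small. The strategy is to extract the leading $h$-order of $I_0-\tilde I_0$ via stationary phase at $p_0$; the terms $I_1-\tilde I_1$ and $R_1-\tilde R_1$ will be subleading in $h$.

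\emph{Stationary phase at $p_0$.} Since the phases $\app{\Phi}{j}$, the factors $\Psi^\sharp_{jk}$, and the leading amplitudes $\app{a}{j}_0(p)=(\det Y_j(s_j(p)))^{-1/2}$ depend only on the geometry, the difference $I_0-\tilde I_0$ is a single oscillatory integral
\[
\int_{\Omega_T}(q_2^2-\tilde q_2^2)\,e^{i\psi/h}\,G\,dV,\qquad \psi := \sum_{j=0}^3 \app{\Psi}{j},
\]
with $G := \bigl(\Psi^\sharp_{12}+\Psi^\sharp_{13}+\Psi^\sharp_{23}\bigr)\,\app{\bar a}{0}_0\app{\bar a}{1}_0\app{a}{2}_0\app{a}{3}_0$. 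To apply Proposition~\ref{prop_stationary_phase} we check: (i) $\psi(p_0)=0$ and $\operatorname{Im}\psi\geq 0$ with quadratic vanishing transverse to each $\app{\gamma}{j}$, by \eqref{eq_Phi_cond}; (ii) with the rescaling fixed so that $\kappa_0=-\sigma^2$, the relation $\sigma^2\app{\xi}{0}=\kappa_1\app{\xi}{1}+\kappa_2\app{\xi}{2}+\kappa_3\app{\xi}{3}$ of Lemma~\ref{lem_xis_lin_indep} combined with $\nabla\app{\Psi}{j}(p_0)=\kappa_j\app{\xi}{j}$ yields $\nabla\psi(p_0)=0$; (iii) $\operatorname{Im}H\psi(p_0)=\sum_{j=0}^3|\kappa_j|\,\operatorname{Im}H\app{\Phi}{j}(p_0)$ is a sum of positive semidefinite quadratic forms whose respective $1$-dimensional kernels $\R\app{\xi}{j}$ are pairwise distinct, so the sum is positive definite and $\det H\psi(p_0)\neq 0$.

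\emph{Conclusion and main obstacle.} Proposition~\ref{prop_stationary_phase} then yields a leading $h$-asymptotic
\[
I_0-\tilde I_0 \,=\, C\,h^{-(n+1)/2}\,(q_2^2-\tilde q_2^2)(p_0)\,G(p_0) + (\text{subleading in }h),
\]
with $C\neq 0$ independent of $h$. Comparing leading orders in the identity of the first paragraph forces $(q_2^2-\tilde q_2^2)(p_0)\,G(p_0)=0$. By Lemma~\ref{lem_Phis_nonzero} we have $(\Psi^\sharp_{12}+\Psi^\sharp_{13}+\Psi^\sharp_{23})(p_0)\sim C\sigma^{-4}\neq 0$, and each $(\det Y_j)^{-1/2}$ factor in $G(p_0)$ is non-zero, so $G(p_0)\neq 0$ and hence $q_2^2(p_0)=\tilde q_2^2(p_0)$. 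Letting $p_0$ range over $D\setminus U$ and combining with the equality $q_2=\tilde q_2$ on $U$ given by Lemma~\ref{lem_coeff_determined_in_U} (plus the closure argument at the end of Section~\ref{sec: prelim}) gives $q_2^2=\tilde q_2^2$ on all of $D(U)$. The main technical obstacle is the careful $h$-asymptotic bookkeeping: ensuring that every omitted contribution--the remainders $\app{r}{j},\app{r}{ij}$, the first-order integrals $I_1,R_1$, and the excluded pieces $\app{w}{ij}_N$--remains strictly subleading uniformly in the geometric parameter $\sigma$; this is what forces $K$ to be large and makes essential use of the support-separation results of Section~\ref{sec_geom_setup}.
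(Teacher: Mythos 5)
Your proposal is correct and follows essentially the same route as the paper: the third-order integral identity \eqref{eq_iid_zero}, the reduction to $I_0-\tilde I_0$ after discarding the remainders and the $\app{w}{ij}_N$ pieces, and stationary phase at $p_0$ using the linear dependence of the $\app{\xi}{j}$ and Lemma~\ref{lem_Phis_nonzero} to conclude $q_2^2(p_0)=\tilde q_2^2(p_0)$. Your added detail on the Hessian (sum of semidefinite forms with pairwise distinct one-dimensional kernels) and the explicit remark that $I_1,R_1$ are subleading are refinements of, not departures from, the paper's argument.
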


\begin{proof}
Since  $S = \tilde S$ implies that $\p^3_{\eps_1 \eps_2 \eps_3} (S - \tilde S)|_{\eps=0}=0$,
we see from \eqref{eq_iid_zero} and \eqref{eq_I_0_def} above that
\begin{align*}
0=I_0 - \tilde I_0 = &\quad \int_{\Omega_T} 
e^{ \frac{i}{h} (\app{\Psi}{0} +\app{\Psi}{1} + \app{\Psi}{2} + \app{\Psi}{3}) } q^2_2
(\Psi^\sharp_{12} + \Psi^\sharp_{13} + \Psi^\sharp_{23})
\app{\bar a}{1}_{0} \ap{2}_{0} \ap{3}_{0} \app{\bar a}{0}_{0} \,dx\\
&-
\int_{\Omega_T} 
e^{ \frac{i}{h} (\app{\tilde \Psi}{0} + \app{\tilde \Psi}{1} +  \app{\tilde \Psi}{2} +  \app{\tilde \Psi}{3}) } \tilde q^2_2
(\tilde\Psi^\sharp_{12} + \tilde\Psi^\sharp_{13} + \tilde\Psi^\sharp_{23})  
\app{\bar{\tilde a}}{1}_{0} \app{\tilde a}{2}_{0} \app{\tilde a}{3}_{0} \app{\bar{\tilde a}}{0}_{0} 
\,dx,
\end{align*}
Moreover we have that $\ap{j}_{0} = \app{\tilde a}{j}_{0}$ and $\Psi^\sharp_{ij} = \tilde \Psi^\sharp_{ij}$, as these 
are determined by the parameters $\app{\xi}{j}$ and $\kappa_j$ for the Gaussian beams, the  geometry of the manifold,
and the source terms specified in \eqref{eq_specific_GB_source_prob}, and since
source terms $\app{f}{j}$, which are identical for $\app{v}{j}$ and $\app{\tilde v}{j}$. The last part follows 
from the fact that the potentials are fully determined on the measurement set $U$.
It follows that
\begin{align*}
0=I_0 - \tilde I_0 = &\quad \int_{X} 
e^{ \frac{i}{h} (\app{\Psi}{0} +\app{\Psi}{1} + \app{\Psi}{2} + \app{\Psi}{3}) } (q^2_2- \tilde q^2_2)
(\Psi^\sharp_{12} + \Psi^\sharp_{13} + \Psi^\sharp_{23})
\app{\bar a}{1}_{0} \ap{2}_{0} \ap{3}_{0} \app{\bar a}{0}_{0} \,dx
\end{align*}
where $X$ is a neighborhood of the support of the product of the amplitude terms, which we choose as the set $K$
in Proposition \ref{prop_stationary_phase}. 
We will now apply Proposition \ref{prop_stationary_phase}. We need 
to thus check the conditions of Proposition \ref{prop_stationary_phase}. 
Let
$$
\psi:= \app{\Psi}{0} +\app{\Psi}{1} + \app{\Psi}{2} + \app{\Psi}{3} 
= \kappa_0 \overline{\app{\Phi}{0}} + \kappa_1\overline{\app{\Phi}{2}} +  \kappa_2  \app{\Phi}{2} +  \kappa_3\app{\Phi}{3}. 
$$
Firstly we have that $\psi( p_0) = \psi(x_0) = 0$, since the phase functions $ \app{\Phi}{j}$ vanish on the 
on the corresponding geodesics because of \eqref{eq_Phi}.  We also have that 
$$
\nabla \psi( x_0) =\kappa_0\app{\xi}{0} + \kappa_1\app{\xi}{1} +  \kappa_2 \app{\xi}{2} +  \kappa_3 \app{\xi}{3} = 0,
$$
by \eqref{eq_lin_dependent}. Furthermore for $x \in X$
\begin{align*}
\Im \psi =  \kappa_0 \Im \overline{\app{\Phi}{0}} + \kappa_1 \Im \overline{\app{\Phi}{1}} +  \kappa_2  \app{\Im\Phi}{2} + \kappa_3   \app{\Im\Phi}{3}  \geq 0, 
\end{align*}
which holds, because of \eqref{eq_Phi_cond}  and  $\kappa_0,\kappa_1 <0$, $\kappa_2,\kappa_3 > 0$.
For the Hessian we have similarly that 
$$
\Im H\psi(x_0) = (\kappa_0 \Im H \overline{\app{\Phi}{0}} + \kappa_1\Im  H \overline{\app{\Phi}{1}} 
+  \kappa_2 \Im H \app{\Phi}{2} + \kappa_3 \Im H\app{\Phi}{3})(x_0)  >  0,
$$
because of Lemma \ref{lem_Riccati} and  since $\kappa_0,\kappa_1 >0$, $\kappa_2,\kappa_3 < 0$. 
Finally note that $x_0$ is a local minima for $\Im \psi$, since $H \Im\psi(x_0) > 0$,
so that
\begin{align*}
\Im \nabla \psi
> 0,
\quad \text{ in } X \setminus \{  p_0 \},
\end{align*}
and hence $\nabla \psi \neq 0$ in $X \setminus \{  p_0 \}$.
We thus see that $\psi$ satisfies the conditions of Proposition \ref{prop_stationary_phase}.
Thus by  Proposition \ref{prop_stationary_phase} we  now have  that
$$
0 = h^{n/2}(I_0 - \tilde I_0) \to C \big[(\Psi^\sharp_{12} + \Psi^\sharp_{13} + \Psi^\sharp_{23})(q^2_2 - \tilde q^2_2)\big](x_0), 
$$
as $h \to 0$,
with some non-zero constant $C \in \C$. Lemma \ref{lem_Phis_nonzero} implies now that
$$
(\Psi^\sharp_{12} + \Psi^\sharp_{13} + \Psi^\sharp_{23})(x_0) \neq 0.
$$
It follows that  $q^2_2(x_0) = \tilde q^2_2(x_0)$.
\end{proof}

\subsection{Uniqueness of $q_2$} \label{sec_q2_only}

In this section we will show that $q_2 = \tilde q_2$. We will do this by using  integral identity 
\eqref{eq_iid_2}  related to the higher order linearization, which allows us to determine $q_2^3$.  
For this we need to modify our choices of  $\app{v}{i}$ and $\app{w}{ij}$, and choose the solutions $\app{w}{ijk}$ 
that appear in the integral identity. 

\medskip
\noindent
The solutions appearing in the integral equation \eqref{eq_iid_2} are built up using five distinct 
Gaussian beams $\app{v}{i}$, $i=0,1,2,3,4$. We thus need to specify an additional Gaussian beam
$\app{v}{4}$ traveling along the geodesic $\app{\gamma}{4}$ defined in section \ref{sec_geom_setup}.
The solutions $\app{v}{i}$ and $\app{w}{ij}$ are built up as in  subsection \ref{sec_GBs_and_geodesics}. Here we however 
need  to specify an additional Gaussian beam $\app{v}{4}$.
We also make use of the higher order WKB interaction solutions of section \ref{sec_high_WKB_interaction} and
need to define them. Furthermore we need to redefine $\kappa_j$ parameters.

Recall that in the normal coordinates used section \ref{sec_geom_setup} and in Lemma \ref{lem_xis_lin_indep}, 
the vector $\app{\xi}{4}$  could be written as 
\begin{equation} \label{eq_xi4}
\begin{aligned}
\app{\xi}{4} :=  \big( \sqrt{ 1 + \sigma^2 } , 1 , \sigma ,0, \dots ,0   \big),
\end{aligned}
\end{equation}
Note that this choice is such  that $\app{\xi}{4}$ is  lightlike. Recall also 
that $\app{\xi}{4}$  depends on the parameter $\sigma$, and that for small $\sigma>0$,
$$
\app{\xi}{4} \sim (1,1,0,\dots,0) = \app{\xi}{1},
$$
as is the case for the geodesics $\app{\xi}{j}$, $j= 2,3$.
Next we will formulate an analogue of Lemma \ref{lem_xis_lin_indep}, where we add the vector 
$\app{\xi}{4}$.

\begin{lemma} \label{lem_xis_lin_indep_2}
Let $\app{\xi}{j}$, $j=0,1,2,3$ be the lightlike vectors specified in Lemma \ref{lem_xis_lin_indep} and 
let $\app{\xi}{4}$ be given by \eqref{eq_xi4}.
We can choose $\tilde \kappa_i \neq 0$, $i=0,..,4$ such that
\begin{equation} \label{eq_lin_dependent_2}
\begin{aligned}
\tilde\kappa_0 \app{\xi}{0} +
\tilde\kappa_1 \app{\xi}{1} + \tilde\kappa_2 \app{\xi}{2} + 
\tilde\kappa_3 \app{\xi}{3} + \tilde\kappa_4 \app{\xi}{4} = 0.
\end{aligned}
\end{equation}
where the $\tilde \kappa_i$ are such that
$\tilde \kappa_0 = -\sigma^2$,  $ \tilde \kappa_4 = \sigma^2$ and
\begin{align*}
\tilde\kappa_1 \sim  -2(1-s_0) + \frac{\sigma^2}{ 2 }( 3 - s_0),  \quad %
\tilde\kappa_2 \sim  1-s_0 \mp \frac{\sigma}{2} \sqrt{ 1 - s_0^2}, \quad 
\tilde\kappa_3 \sim  1-s_0 \pm  \frac{\sigma}{2} \sqrt{ 1 - s_0^2},
\end{align*}
where the sign corresponds to sign in the definition of $\app{\xi}{0}$.
\end{lemma}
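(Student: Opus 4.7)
The proof will be a direct linear algebra computation, essentially parallel to the proof of Lemma \ref{lem_xis_lin_indep}. The plan is to reduce the vector equation \eqref{eq_lin_dependent_2} to an explicit $3 \times 3$ linear system for the unknown coefficients $\tilde\kappa_1,\tilde\kappa_2,\tilde\kappa_3$, solve it exactly, and then Taylor expand in $\sigma$ to extract the asymptotics.

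First, I observe that each of the five vectors $\app{\xi}{0},\dots,\app{\xi}{4}$ has nonzero components only in positions $0,1,2$ (the remaining slots vanish by construction). Thus, the five vectors lie in a three-dimensional subspace of $T_{p_0}N$, so any five of them are automatically linearly dependent. Fixing $\tilde\kappa_0 = -\sigma^2$ and $\tilde\kappa_4 = \sigma^2$ and moving those terms to the right-hand side, the vector equation \eqref{eq_lin_dependent_2} becomes the $3\times 3$ system
\begin{align*}
\tilde\kappa_1 + \tilde\kappa_2 + \tilde\kappa_3 &= \sigma^2\bigl(1-\sqrt{1+\sigma^2}\bigr), \\
\tilde\kappa_1 + \sqrt{1-\sigma^2}\,(\tilde\kappa_2+\tilde\kappa_3) &= \sigma^2(s_0-1), \\
\sigma\,(\tilde\kappa_2 - \tilde\kappa_3) &= \pm\,\sigma^2\sqrt{1-s_0^2} - \sigma^3,
\end{align*}
obtained from the three nontrivial coordinate components. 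The third equation decouples and immediately yields $\tilde\kappa_2-\tilde\kappa_3 = \pm\sigma\sqrt{1-s_0^2} - \sigma^2$.

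Next, I set $A := \tilde\kappa_2+\tilde\kappa_3$ and subtract the second equation from the first to eliminate $\tilde\kappa_1$, producing
\[
A\bigl(1 - \sqrt{1-\sigma^2}\bigr) = \sigma^2\bigl(2 - s_0 - \sqrt{1+\sigma^2}\bigr).
\]
Using $1-\sqrt{1-\sigma^2} = \tfrac{\sigma^2}{2} + O(\sigma^4)$ and $\sqrt{1+\sigma^2} = 1 + \tfrac{\sigma^2}{2} + O(\sigma^4)$ gives the asymptotic $A \sim 2(1-s_0)$. Recovering $\tilde\kappa_1$ from the first equation yields $\tilde\kappa_1 \sim -2(1-s_0) + \tfrac{\sigma^2}{2}(3-s_0)$, and $\tilde\kappa_2, \tilde\kappa_3$ follow from $\tilde\kappa_{2,3} = (A \pm (\tilde\kappa_2-\tilde\kappa_3))/2$, giving the stated asymptotic expressions. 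In particular, all five coefficients are nonzero for small $\sigma>0$, which is what later applications of the lemma require.

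I do not foresee any genuine obstacle: the equations are linear and the required Taylor expansions are elementary. The only subtle point is bookkeeping of the two $\pm$ cases (which correspond to the two possible signs in the definition of $\app{\xi}{0}$) and making sure the leading-order expansions remain nondegenerate as $\sigma\to 0^+$; this is automatic since $s_0\in[-1,1)$ in the regime of interest (the optimal geodesics were constructed so that $\app{\xi}{0}$ and $\app{\xi}{1}$ are not collinear, equivalently $s_0\neq 1$).
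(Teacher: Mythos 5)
Your proposal is correct and follows essentially the same route as the paper: the paper's proof is precisely a Gaussian elimination of the $3\times 3$ system (with $\tilde\kappa_0=-\sigma^2$, $\tilde\kappa_4=\sigma^2$ moved to the right-hand side), whose explicit solutions are recorded in Appendix~\ref{sec_explicit} and then Taylor expanded in $\sigma$. One remark: your elimination yields $\tilde\kappa_2 \sim 1-s_0 \pm \tfrac{\sigma}{2}\sqrt{1-s_0^2}$ and $\tilde\kappa_3 \sim 1-s_0 \mp \tfrac{\sigma}{2}\sqrt{1-s_0^2}$, which agrees with the explicit formulas in Appendix~\ref{sec_explicit} but has the $\pm/\mp$ labels swapped relative to the lemma statement — an internal inconsistency of the paper rather than an error on your part, and immaterial for the later applications, which use only the signs and the product asymptotics of $\tilde\kappa_2$ and $\tilde\kappa_3$.
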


\begin{proof}
This can be proven by Gaussian similarly as Lemma \ref{lem_xis_lin_indep} using Gaussian elimination.
This gives explicit  and somewhat lengthy expressions for the coefficients $\tilde \kappa_i$.
These are listed in the appendix, see subsection \ref{sec_explicit}.
\end{proof}

\medskip 
\noindent
We now pick the Gaussian beams $ \app{v}{j}$, $j=0,1,2,3$, as in \eqref{eq_specific_GB_source_prob},
and a new solutions $\app{v}{4}$ that also solves \eqref{eq_specific_GB_source_prob}.
More specifically we define $\app{v}{j}$ for $j=2,3,4$ as
\begin{equation} \label{eq_GB_reparam_1_2}
\begin{aligned}
\app{v}{j} 
= 
e^{i \app{|\tilde\kappa_j| \Phi}{j}/h}\sum_{k=0}^K \ap{j}_k \Big( \frac{h} {|\tilde\kappa_j|}\Big)^k  + \app{r}{j}, 
\quad j=2,3,4.
\end{aligned}
\end{equation}
For $j=0,1$ we use the complex conjugates $ \app{\bar v}{j}$, since $\tilde\kappa_0,\tilde\kappa_1 < 0$.
We drop the complex conjugate from our notation for clarity.
That is  we set   
\begin{equation} \label{eq_GB_reparam_2_2}
\begin{aligned}
\app{v}{j} 
= 
e^{-i \app{|\tilde\kappa_j| \overline \Phi}{j} / h }\sum_{k=0}^K \app{\bar a}{j}_k \Big( \frac{h} {|\tilde\kappa_j|}\Big)^k  + \app{\bar r}{j}, 
\quad j=0,1.
\end{aligned}
\end{equation}
Where $\Phi$ are constructed, so that 
$$
\nabla_g \app{\Phi}{j}(  p_0) = \app{\xi}{j}.
$$
We will use the notation
\begin{equation} \label{eq_Upsilon_2}
\begin{aligned}
\Upsilon^{(j)} :=
\begin{cases}
-|\tilde\kappa_j| \overline \Phi^{(j)}, \quad &j = 0,1.  \\
\phantom{-}|\tilde\kappa_j| \Phi^{(j)}, \quad  &j = 2,3,4. 
\end{cases}
\end{aligned}
\end{equation}
We will again drop the absolute value from the notation, since
\begin{align*}
\Upsilon^{(j)} = \tilde\kappa_j \Phi^{(j)}, \qquad \text{ for small $\sigma > 0$}.
\end{align*}
We define $\Upsilon^{(ij)}$ and $\Upsilon_{ij}^\sharp$ are defined correspondingly as
$\Psi^{(ij)}$ and $\Psi_{ij}^\sharp$, and contain implicitly factors of $ \tilde \kappa_j$.
Furthermore we define  
\begin{equation} \label{eq_Upsilon_sharp_ijk}
\begin{aligned}
\Upsilon^\sharp_{ijk} := \frac{1}{| \nabla \app{\Upsilon}{i} + \nabla \app{\Upsilon}{j} + \nabla \app{\Upsilon}{k} |^2_g},
\end{aligned}
\end{equation}
which appears in our formulas because of the construction in Lemma \eqref{lem_WKB_source_w_v}.

\medskip
\noindent
We furthermore define the WKB interaction solutions $\app{w}{ij}$ and $\app{w}{ijk}$ in accordance
with \eqref{eq_wij} and \eqref{eq_wij_2}, as solutions to
\begin{equation} \label{eq_wij_3}
\begin{aligned}
(\square_g + Q) \app{w}{ij} = q_2\app{v}{i}\app{v}{j}, \qquad (\square_g + Q) \app{w}{ijk} = q_2 \app{v}{i} \app{w}{jk}, 
\end{aligned}
\end{equation}
where $Q := q_1 + 2q_2 u_0$. 
The higher order WKB interaction solution $\app{w}{ijk}$ is well-defined when \eqref{eq_nabla_Phi_nonzero_2} holds. 
The following two lemmas shows that these conditions hold near the point $p_0$.

\begin{lemma} \label{lem_xis_asymp}
We have the following asymptotics 
\begin{equation} \label{eq_xi_asymp}
\begin{aligned}
&\big \langle \app{\xi}{0} , \app{\xi}{1}  \big \rangle_\eta, 
 \big \langle \app{\xi}{0} , \app{\xi}{2}  \big \rangle_\eta, 
 \big \langle \app{\xi}{0} , \app{\xi}{3}  \big \rangle_\eta, 
 \big \langle \app{\xi}{0} , \app{\xi}{4}  \big \rangle_\eta \sim s_0-1, 
\end{aligned}
\end{equation}
and furthermore 
\begin{equation} \label{eq_xi_asymp}
\begin{aligned}
&\big \langle \app{\xi}{1} , \app{\xi}{2}  \big \rangle_\eta \sim -\tfrac{1}{2}\sigma^{2}, \quad 
 \big \langle \app{\xi}{1} , \app{\xi}{3}  \big \rangle_\eta \sim -\tfrac{1}{2}\sigma^{2}, \quad
 \big \langle \app{\xi}{1} , \app{\xi}{4}  \big \rangle_\eta \sim -\tfrac{1}{2}\sigma^{2}, \\ 
&\big \langle \app{\xi}{2} , \app{\xi}{3}  \big \rangle_\eta \sim -2\sigma^{2}, \quad 
 \big \langle \app{\xi}{2} , \app{\xi}{4}  \big \rangle_\eta \sim -\tfrac{1}{8}\sigma^{6}, \quad 
 \big \langle \app{\xi}{3} , \app{\xi}{4}  \big \rangle_\eta \sim -2\sigma^{2}.
\end{aligned}
\end{equation}
as $\sigma \to 0$. %
\end{lemma}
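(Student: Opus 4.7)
The plan is a direct computation. In the normal coordinates fixed in Section~\ref{sec_geom_setup} the metric at $p_0$ is Minkowski, so I may use $\langle u, v\rangle_\eta = -u^0 v^0 + \sum_{i=1}^n u^i v^i$ on all the tangent vectors $\app{\xi}{j}\in T_{p_0}N$, whose coordinate expressions are given in Lemma~\ref{lem_xis_lin_indep} and \eqref{eq_xi4}. From here the lemma is simply a matter of substituting these expressions, expanding in $\sigma$, and identifying the leading order as $\sigma \to 0$.

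First I would dispatch the four products involving $\app{\xi}{0}$. Here $\langle\app{\xi}{0},\app{\xi}{1}\rangle_\eta = -1 + s_0$ is exact, and for $j = 2, 3, 4$ direct substitution yields expressions that limit to $s_0 - 1$ as $\sigma \to 0$; for instance
\[
\langle\app{\xi}{0},\app{\xi}{2}\rangle_\eta = -1 + s_0\sqrt{1-\sigma^2} \pm \sigma\sqrt{1-s_0^2} \longrightarrow s_0 - 1,
\]
and the analogous computations for $j=3,4$ use $-\sqrt{1+\sigma^2}\to -1$ to produce the same limit. This gives the first block of asymptotics (under the implicit hypothesis $s_0 \neq 1$, consistent with the earlier requirement that $\app{\gamma}{0}$ and $\app{\gamma}{1}$ be distinct).

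Next I would handle the remaining five products using the Taylor expansions
\[
\sqrt{1\pm\sigma^2} = 1 \pm \tfrac{1}{2}\sigma^2 - \tfrac{1}{8}\sigma^4 \pm \tfrac{1}{16}\sigma^6 + O(\sigma^8).
\]
Four of them reduce immediately: $\langle\app{\xi}{1},\app{\xi}{2}\rangle_\eta = \langle\app{\xi}{1},\app{\xi}{3}\rangle_\eta = -1 + \sqrt{1-\sigma^2} = -\tfrac{1}{2}\sigma^2 + O(\sigma^4)$, and $\langle\app{\xi}{1},\app{\xi}{4}\rangle_\eta = 1 - \sqrt{1+\sigma^2} = -\tfrac{1}{2}\sigma^2 + O(\sigma^4)$, and $\langle\app{\xi}{2},\app{\xi}{3}\rangle_\eta = -1 + (1-\sigma^2) + (-\sigma^2) = -2\sigma^2$ exactly, and $\langle\app{\xi}{3},\app{\xi}{4}\rangle_\eta = -\sqrt{1+\sigma^2} + \sqrt{1-\sigma^2} - \sigma^2 = -2\sigma^2 + O(\sigma^6)$, each matching the claimed leading order.

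The only subtle case, and the main obstacle, is $\langle\app{\xi}{2},\app{\xi}{4}\rangle_\eta = -\sqrt{1+\sigma^2} + \sqrt{1-\sigma^2} + \sigma^2$: the constant, $\sigma^2$, and $\sigma^4$ contributions all cancel, so I must carry the Taylor expansions to order $\sigma^6$. Doing so gives
\[
\sqrt{1-\sigma^2} - \sqrt{1+\sigma^2} = -\sigma^2 - \tfrac{1}{8}\sigma^6 + O(\sigma^8),
\]
and adding the $+\sigma^2$ leaves exactly $-\tfrac{1}{8}\sigma^6 + O(\sigma^8)$, which yields $\langle\app{\xi}{2},\app{\xi}{4}\rangle_\eta \sim -\tfrac{1}{8}\sigma^6$ as claimed. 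Once this delicate cancellation is resolved, assembling the ten cases completes the proof.
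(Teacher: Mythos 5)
Your proposal is correct and follows essentially the same route as the paper, which simply computes the Minkowski inner products from the coordinate expressions of the $\app{\xi}{j}$ and Taylor-expands in $\sigma$. Your explicit treatment of the cancellation in $\langle\app{\xi}{2},\app{\xi}{4}\rangle_\eta$ up to order $\sigma^6$ is exactly the one delicate point, and you handle it correctly.
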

\begin{proof}
These are straight forward to compute from the definitions in Lemma \ref{lem_xis_lin_indep} and \eqref{eq_xi4}
by using Taylor expansions.
\end{proof}

\noindent
Next we will derive some further results that are needed to investigate the asymptotics of certain expressions
involving  the $\app{\xi}{i}$ and $\tilde \kappa_i$, in terms of the parameter $\sigma$.

\begin{lemma} \label{lem_Upsilon_asymptotics_1}
We have the following asymptotics
\begin{equation} \label{eq_Upsilon_asymp_1}
\begin{aligned}
\Upsilon^\sharp_{24}  &\sim  C \sigma^{-8}, \\
\Upsilon^\sharp_{ij}  &\sim  C \sigma^{-2}, \quad ij = 12,13,23, \\
\Upsilon^\sharp_{ij}  &\sim  C \sigma^{-4}, \quad ij = 14,34, 
\end{aligned}
\end{equation}
where the constant $C \neq 0$.
We also have that 
\begin{equation} \label{eq_Upsilon_asymp_2}
\begin{aligned}
\Upsilon^\sharp_{234},
 \Upsilon^\sharp_{134},
\Upsilon^\sharp_{124} \,\sim\, C \sigma^{-2}, \qquad 
\Upsilon^\sharp_{123} \,\sim\, C \sigma^{-4}, \qquad 
\end{aligned}
\end{equation}
where the constant $C \neq 0$.
\end{lemma}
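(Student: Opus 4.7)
The plan is to reduce every $\Upsilon^\sharp$ at the point $p_0$ to an explicit rational expression in the scalars $\tilde\kappa_i$ and the Minkowski inner products $\langle\xi^{(i)},\xi^{(j)}\rangle_\eta$, and then substitute the asymptotic orders given by Lemma \ref{lem_xis_lin_indep_2} (in particular $\tilde\kappa_0=-\sigma^2$, $\tilde\kappa_4=\sigma^2$, $\tilde\kappa_1,\tilde\kappa_2,\tilde\kappa_3 \sim O(1)$) and Lemma \ref{lem_xis_asymp}. Since $\nabla\Upsilon^{(j)}(p_0)=\tilde\kappa_j\,\xi^{(j)}$ and each $\xi^{(j)}$ is lightlike, the diagonal contributions vanish and one finds
\begin{align*}
\langle \nabla\Upsilon^{(ij)},\nabla\Upsilon^{(ij)}\rangle_g(p_0) &= 2\,\tilde\kappa_i\tilde\kappa_j\,\langle\xi^{(i)},\xi^{(j)}\rangle_\eta,\\
|\nabla\Upsilon^{(i)}+\nabla\Upsilon^{(j)}+\nabla\Upsilon^{(k)}|_g^2(p_0) &= 2\sum_{\{l,m\}\subset\{i,j,k\}}\tilde\kappa_l\tilde\kappa_m\,\langle\xi^{(l)},\xi^{(m)}\rangle_\eta.
\end{align*}

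For the two-index $\Upsilon^\sharp_{ij}$ the claim reduces to a one-line calculation. The pairs $ij=12,13,23$ involve only $O(1)$ factors of $\tilde\kappa$ and an inner product of order $\sigma^2$, giving $\sigma^{-2}$. The pairs $ij=14,34$ contain one factor $\tilde\kappa_4=\sigma^2$ together with an inner product of order $\sigma^2$, giving $\sigma^{-4}$. The pair $24$ also contains $\tilde\kappa_4=\sigma^2$ but the much smaller inner product $\langle\xi^{(2)},\xi^{(4)}\rangle_\eta\sim -\sigma^{6}/8$, giving $\sigma^{-8}$. In every case, the non-vanishing of the leading coefficient follows from the explicit leading values of the $\tilde\kappa_j$ combined with the explicit inner products from Lemma \ref{lem_xis_asymp}.

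For the three-index cases $\Upsilon^\sharp_{234},\Upsilon^\sharp_{134},\Upsilon^\sharp_{124}$ the key point is that any pair containing the index $4$ acquires a factor $\tilde\kappa_4=\sigma^2$ and therefore contributes at order $\sigma^4$ or higher; the surviving pair from $\{1,2,3\}$ contributes $\sigma^{2}$ (only the inner product is small), which dominates. One checks this pair has a nonzero leading coefficient, yielding $\Upsilon^\sharp\sim C\sigma^{-2}$ in all three cases.

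The main obstacle is $\Upsilon^\sharp_{123}$, where all three pairs contribute at order $\sigma^2$ and, as in the proof of Lemma \ref{lem_Phis_nonzero}, the leading-order $\sigma^{2}$ contributions exactly cancel. Writing $b=1-s_0$ and plugging the leading values $\tilde\kappa_1\sim-2b$, $\tilde\kappa_2,\tilde\kappa_3\sim b$, $\langle\xi^{(1)},\xi^{(2)}\rangle_\eta\sim\langle\xi^{(1)},\xi^{(3)}\rangle_\eta\sim-\sigma^2/2$, $\langle\xi^{(2)},\xi^{(3)}\rangle_\eta=-2\sigma^2$ into the three-index denominator gives $b^2\sigma^2+b^2\sigma^2-2b^2\sigma^2=0$. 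To extract the true order, one uses the next terms in $\tilde\kappa_2,\tilde\kappa_3$, namely the corrections $\mp\frac{\sigma}{2}\sqrt{1-s_0^2}$ (whose squares give a nonzero $O(\sigma^2)$ contribution to $\tilde\kappa_2\tilde\kappa_3$), together with the $O(\sigma^4)$ Taylor tails of $\langle\xi^{(1)},\xi^{(2)}\rangle_\eta$ and $\langle\xi^{(1)},\xi^{(3)}\rangle_\eta$ and the $O(\sigma^2)$ correction to $\tilde\kappa_1$, exactly paralleling the computation of Lemma \ref{lem_Phis_nonzero}. Collecting the $\sigma^4$ terms yields a nonzero coefficient (the same cancellation-free combination encountered previously), hence $\Upsilon^\sharp_{123}\sim C\sigma^{-4}$ with $C\neq 0$. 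The routine but mildly tedious expansion can be carried out using the explicit expressions for $\tilde\kappa_j$ listed in Appendix~\ref{sec_explicit}.
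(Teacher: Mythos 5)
Your proposal is correct, and for the two-index quantities it coincides with the paper's computation: both reduce $(\Upsilon^\sharp_{ij})^{-1}$ to $2\tilde\kappa_i\tilde\kappa_j\langle \xi^{(i)},\xi^{(j)}\rangle_\eta$ and substitute the asymptotics from Lemmas \ref{lem_xis_lin_indep_2} and \ref{lem_xis_asymp}. For the three-index quantities, however, you take a genuinely different and more laborious route. The paper exploits the linear dependence $\sum_{i=0}^4\tilde\kappa_i\xi^{(i)}=0$ to rewrite each three-term gradient sum as minus the complementary two-term sum; since both complementary vectors are lightlike only a single cross term survives, e.g.
\begin{equation*}
(\Upsilon^\sharp_{123})^{-1}=\big|\tilde\kappa_0\xi^{(0)}+\tilde\kappa_4\xi^{(4)}\big|_\eta^2
=2\tilde\kappa_0\tilde\kappa_4\langle\xi^{(0)},\xi^{(4)}\rangle_\eta\sim 2(1-s_0)\,\sigma^4,
\end{equation*}
which gives the order \emph{and} the nonvanishing of the constant in one line, with no cancellation to analyze. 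Your direct expansion into the three pairwise cross terms works for $\Upsilon^\sharp_{234},\Upsilon^\sharp_{134},\Upsilon^\sharp_{124}$ (a single dominant $\sigma^2$ pair survives), and you correctly detect that for $\Upsilon^\sharp_{123}$ the three $O(\sigma^2)$ contributions cancel ($b^2+b^2-2b^2=0$); but at that point you only \emph{assert} that the collected $\sigma^4$ coefficient is nonzero rather than computing it. That assertion is true, but the cleanest way to close it is precisely the identity above, which you should invoke instead of the "routine but tedious" expansion. Two small inaccuracies worth fixing: your parenthetical that the $\sigma^2$ cancellation occurs "as in the proof of Lemma \ref{lem_Phis_nonzero}" is not right (no such leading-order cancellation is used there), and everywhere the nonvanishing of the constants tacitly uses $s_0\neq 1$, i.e.\ $\xi^{(0)}\neq\xi^{(1)}$, which holds by construction but should be mentioned.
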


\begin{proof}
We can use Lemmas \ref{lem_xis_lin_indep_2} and \ref{lem_xis_asymp}.
This gives that
\begin{align*}
(\Upsilon^\sharp_{24})^{-1} = 
\big \langle\nabla \app{\Upsilon}{2}(  p_0), \, \nabla \app{\Upsilon}{4}( p_0)  \big \rangle_g 
=  \tilde \kappa_1 \tilde \kappa_4 \langle\app{\xi}{2} , \,  \app{\xi}{4}  \rangle_\eta 
\sim (C + \sigma^2 )\sigma^2 C \sigma^6  
\,\sim\, C\sigma^8, 
\end{align*}
For some constant $C \neq 0$. 
One sees similarly that  $(\Upsilon^\sharp_{ij})^{-1} \sim C \sigma^2$, where $C \neq 0$,
$ij=12,13,23$, and 
that  $(\Upsilon^\sharp_{ij})^{-1} \sim C \sigma^4$, where $C \neq 0$,
$ij = 24,34$. Proving \eqref{eq_Upsilon_asymp_1}.
The other part of the claim follows from the above by observing that from \eqref{eq_Upsilon_sharp_ijk}
we get e.g. that 
$-\tilde\kappa_0 \app{\xi}{0} - \tilde\kappa_1 \app{\xi}{1} = \tilde\kappa_2 \app{\xi}{2} +
\tilde\kappa_3 \app{\xi}{3} + \tilde\kappa_4 \app{\xi}{4}$, so that
\begin{align*}
(\Upsilon^\sharp_{234} )^{-1}
=  -\langle  \tilde \kappa_0 \app{\xi}{0} , \,  \tilde \kappa_1 \app{\xi}{1}  \rangle_\eta
\,\sim\, C \sigma^{2},
\end{align*}
where $C \neq 0$. The other cases are similar.
\end{proof}

\noindent
Finally we will need the following lemma.

\begin{lemma}\label{lem_Upsilon_asymptotics} We have that 
\begin{small}
\begin{align*}
\mathbf \Upsilon^\sharp 
&:=
\big[\Upsilon^\sharp_{234}(\Upsilon^\sharp_{23} + \Upsilon^\sharp_{24} + \Upsilon^\sharp_{34})
+\Upsilon^\sharp_{134}(\Upsilon^\sharp_{13} + \Upsilon^\sharp_{14} + \Upsilon^\sharp_{34}) \\
&\quad +\Upsilon^\sharp_{124}(\Upsilon^\sharp_{12} + \Upsilon^\sharp_{14} + \Upsilon^\sharp_{24})
+\Upsilon^\sharp_{123}(\Upsilon^\sharp_{12} + \Upsilon^\sharp_{13} + \Upsilon^\sharp_{23}) 
 +\Upsilon^\sharp_{14}\Upsilon^\sharp_{23}
+\Upsilon^\sharp_{24}\Upsilon^\sharp_{13}
+\Upsilon^\sharp_{34}\Upsilon^\sharp_{12}\big] 
(p_0) \\ 
&\sim\,\, C \sigma^{-10}, 
\end{align*}
\end{small}
as $\sigma \to 0$, and  where $C \neq 0$.
\end{lemma}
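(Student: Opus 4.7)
The plan is to identify the handful of summands in $\mathbf\Upsilon^\sharp$ that attain the leading order $\sigma^{-10}$, reduce the triple-index factors to pair-index ones via the linear dependence relation, and verify that the sum of their leading coefficients does not cancel.

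First I would use Lemma \ref{lem_Upsilon_asymptotics_1} to attach a $\sigma$-order to every product appearing in the definition of $\mathbf\Upsilon^\sharp$. A short bookkeeping check shows that only three products reach order $\sigma^{-10}$, namely those in which the anomalous factor $\Upsilon^\sharp_{24}\sim C\sigma^{-8}$ is multiplied by a factor of order $\sigma^{-2}$:
\begin{align*}
\Upsilon^\sharp_{234}\,\Upsilon^\sharp_{24}, \qquad \Upsilon^\sharp_{124}\,\Upsilon^\sharp_{24}, \qquad \Upsilon^\sharp_{24}\,\Upsilon^\sharp_{13}.
\end{align*}
Every other product in $\mathbf\Upsilon^\sharp$ is $O(\sigma^{-6})$ by direct order-counting, so the claim reduces to showing that $\Upsilon^\sharp_{24}(\Upsilon^\sharp_{234}+\Upsilon^\sharp_{124}+\Upsilon^\sharp_{13})(p_0)$ has a non-zero $\sigma^{-10}$ coefficient.

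Second, I would obtain closed formulas at $p_0$. Since $\nabla\Upsilon^{(j)}(p_0)=\tilde\kappa_j\xi^{(j)}$ is lightlike, the computation in the proof of Lemma \ref{lem_Phis_nonzero} gives $\Upsilon^\sharp_{ij}(p_0) = \bigl(2\tilde\kappa_i\tilde\kappa_j\langle\xi^{(i)},\xi^{(j)}\rangle_\eta\bigr)^{-1}$. For the triple versions I would invoke \eqref{eq_lin_dependent_2} to write $\nabla\Upsilon^{(i)}+\nabla\Upsilon^{(j)}+\nabla\Upsilon^{(k)} = -(\nabla\Upsilon^{(l)}+\nabla\Upsilon^{(m)})$, where $\{l,m\}$ is the complement of $\{i,j,k\}$ in $\{0,\ldots,4\}$, yielding the tidy identity $\Upsilon^\sharp_{ijk}(p_0) = \Upsilon^\sharp_{lm}(p_0)$. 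In particular $\Upsilon^\sharp_{234}=\Upsilon^\sharp_{01}$ and $\Upsilon^\sharp_{124}=\Upsilon^\sharp_{03}$, so all three leading-order factors are pair-type objects with the same scaling $\sim C\sigma^{-2}(1-s_0)^{-2}$.

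Third, I would substitute the asymptotics of Lemmas \ref{lem_xis_lin_indep_2} and \ref{lem_xis_asymp} into these formulas. The three contributing factors turn out to have leading coefficients $-\tfrac14$, $\tfrac12$, $\tfrac12$ respectively (in units of $\sigma^{-2}(1-s_0)^{-2}$), and $\Upsilon^\sharp_{24}(p_0)$ has leading coefficient $-4\sigma^{-8}(1-s_0)^{-1}$. Assembling these gives $\mathbf\Upsilon^\sharp(p_0)\sim C\sigma^{-10}$ with explicit constant $C=-3(1-s_0)^{-3}$. The main obstacle is precisely the cancellation check in this last step: a priori the three $\sigma^{-10}$ contributions have mixed signs and could conceivably sum to zero, which would force one to work out subleading corrections in $\sigma$. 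Once the arithmetic $-\tfrac14+\tfrac12+\tfrac12=\tfrac34\neq 0$ is verified, the claim is immediate, bearing in mind that $s_0\neq 1$ (otherwise $\xi^{(0)}$ and $\xi^{(1)}$ would be collinear, contradicting the distinctness of $\gamma^{(0)}$ and $\gamma^{(1)}$ ensured by Lemma \ref{lem_variation_of_geodesics}).
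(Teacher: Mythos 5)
Your proposal is correct and follows the same overall strategy as the paper's proof: isolate the products containing the anomalous factor $\Upsilon^\sharp_{24}\sim C\sigma^{-8}$ and use the linear dependence \eqref{eq_lin_dependent_2} to rewrite $\Upsilon^\sharp_{234}=\Upsilon^\sharp_{01}$ and $\Upsilon^\sharp_{124}=\Upsilon^\sharp_{03}$ at $p_0$. However, your version is actually the more complete one at the decisive step. The paper's proof asserts that $\Upsilon^\sharp_{234}\Upsilon^\sharp_{24}$ and $\Upsilon^\sharp_{124}\Upsilon^\sharp_{24}$ are only of order $\sigma^{-8}$, so that the single term $\Upsilon^\sharp_{13}\Upsilon^\sharp_{24}\sim C\sigma^{-10}$ dominates and no cancellation can occur; but this is inconsistent with Lemma \ref{lem_Upsilon_asymptotics_1}, which gives $\Upsilon^\sharp_{234},\Upsilon^\sharp_{124}\sim C\sigma^{-2}$ and hence all three products are genuinely of order $\sigma^{-10}$. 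You correctly recognize that a cancellation check among the three leading contributions is therefore unavoidable, and your arithmetic checks out: with $\Upsilon^\sharp_{lm}(p_0)=\bigl(2\tilde\kappa_l\tilde\kappa_m\langle\xi^{(l)},\xi^{(m)}\rangle_\eta\bigr)^{-1}$ and the asymptotics of Lemmas \ref{lem_xis_lin_indep_2} and \ref{lem_xis_asymp}, one finds $\Upsilon^\sharp_{01}+\Upsilon^\sharp_{03}+\Upsilon^\sharp_{13}\sim\tfrac{3}{4}\sigma^{-2}(1-s_0)^{-2}$ and $\Upsilon^\sharp_{24}\sim-4\sigma^{-8}(1-s_0)^{-1}$, giving $\mathbf\Upsilon^\sharp\sim-3(1-s_0)^{-3}\sigma^{-10}$, exactly as you state. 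Your side remark that $s_0\neq1$ (needed for the constants to make sense) is also apt, since $s_0=1$ would make $\xi^{(0)}$ and $\xi^{(1)}$ collinear. In short: same route, but you supply the cancellation verification that the argument genuinely requires.
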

 
\begin{proof}
Note that the term $\Upsilon^\sharp_{24}$ has the most extreme asymptotics
according to Lemma \ref{lem_Upsilon_asymptotics_1}. There are  three terms involving this factor.
Firstly we have
\begin{align*}
\Upsilon^\sharp_{234}\Upsilon^\sharp_{24}  \sim 
\Upsilon^\sharp_{01}\Upsilon^\sharp_{24}  \sim  C \sigma^{-8}, 
\end{align*}
where we used that
$\tilde\kappa_0 \app{\xi}{0} + \tilde\kappa_1 \app{\xi}{1} + \tilde\kappa_2 \app{\xi}{2} +
\tilde\kappa_3 \app{\xi}{3} + \tilde\kappa_4 \app{\xi}{4} = 0$. Similarly 
we have for the second term involving $\Upsilon^\sharp_{24}$, that
\begin{align*}
\Upsilon^\sharp_{124}\Upsilon^\sharp_{24}  \sim
\Upsilon^\sharp_{03}\Upsilon^\sharp_{24}  \sim C \sigma^{-8}. 
\end{align*}
For the final term with $\Upsilon^\sharp_{24}$, we get that
\begin{align*}
\Upsilon^\sharp_{13}\Upsilon^\sharp_{24}  \sim  C \sigma^{-10}, 
\end{align*}
The claim follows from these.
\end{proof}

\noindent
We will now show that $q_2 = \tilde q_2$. We will do this by using the integral identity 
\eqref{eq_iid_2}  for the fourth order linearization. From this we will obtain that $q_2^3=\tilde q_2^3$,
and since $q_2$ and $\tilde q_2$ are real we get that $q_2 = \tilde q_2$.

\medskip
\noindent
The proof that $q_2^3=\tilde q_2^3$, is similar to that of $q_2^2 = \tilde q_2^2$
in subsection \ref{sec_q_2}. Here we need however to use the 
higher order WKB interaction solutions $\app{w}{ijk}$ specified in \eqref{eq_wij_3}, and the  $\app{v}{i}$
and $\app{w}{jk}$ specified in \eqref{eq_GB_reparam_1_2}, \eqref{eq_GB_reparam_2_2} and  \eqref{eq_wij_3} earlier in this section.

\medskip
\noindent
Given that $S =\tilde S$ we have that
$\p^4_{\eps_1 \eps_2 \eps_3 \eps_4} (S - \tilde S)|_{\eps=0}=0$, and Lemma \ref{lem_iid_4th},
gives us that
\begin{equation} \label{eq_J_minus_J}
\begin{aligned}
J -\tilde J 
:= \int_{\Omega_T} &2\tilde q_2 \Big( \app{\tilde w}{14} \app{\tilde w}{23} + \app{\tilde w}{24} \app{\tilde w}{13} + \app{\tilde w}{34} \app{\tilde w}{12} \\
&\quad+ \app{\tilde v}{1} \app{\tilde w}{234} + \app{\tilde v}{2} \app{\tilde w}{134}
       + \app{\tilde v}{3} \app{\tilde w}{124}+ \app{\tilde v}{4} \app{\tilde w}{123}\Big) \app{\tilde v}{0}\\
-&2 q_2 \Big ( \app{w}{14}  \app{w}{23} +  \app{w}{24}  \app{w}{13} +  \app{w}{34}  \app{w}{12} \\
&\quad+ \app{v}{1}  \app{w}{234} +  \app{v}{2}  \app{w}{134} +  \app{v}{3}  \app{w}{124} + \app{v}{4} \app{w}{123}\Big) \app{v}{0} \, dV =0. 
\end{aligned}
\end{equation}
To determine $q^3_2$ we need to pick the lowest order term  in $h$ in this expressions, which will be of  order $4$.
Let us show that the lowest order parts  are of the form
\begin{equation} \label{eq_J0}
\begin{aligned}
J_0 &:= \int_{\Omega_T} q^3_2 e^{ \frac{i}{h} (\app{\Upsilon}{0} +\app{\Upsilon}{1} + \app{\Upsilon}{2} + \app{\Upsilon}{3}+ \app{\Upsilon}{4}) } 
\mathbf \Upsilon^\sharp
\app{\bar a}{1}_{0} \ap{2}_{0} \ap{3}_{0} \ap{4}_{0} \app{\bar a}{0}_{0} \,dx.
\end{aligned}
\end{equation}
The reasoning is analogous to that in section \ref{sec_q_2}. We can expand the solutions as
$$
\app{v}{i} = \app{\hat v}{i} + \app{r}{i}, 
\qquad \app{w}{ij} = \app{\hat w}{ij} + \app{w}{ij}_N + \app{r}{ij},
\qquad \app{w}{ijk} = \app{\hat w}{ijk} + \app{w}{ijk}_N + \app{r}{ijk}. 
$$
Firstly we can show that
\begin{equation} \label{eq_term1}
\begin{aligned}
\int_{\Omega_T} 2q_2 \app{v}{i} \app{w}{jkl} \app{v}{0}\, dV 
=
\int_{\Omega_T} 2q_2 \app{\hat v}{i} \app{\hat w}{jkl}  \app{\hat v}{0}\, dV  + \mathcal O (h^5).
\end{aligned}
\end{equation}
To derive this we  use on one hand the  estimates provided in Lemma
\ref{lem_GB_solves_wave_eq}  and Propositions \ref{prop_WKB_source_GBs} and \ref{prop_WKB_source_GBs_2}, from which
it follows that
$$
\| \app{r}{i} \|_{L^2(\Omega_T)},
\quad \| \app{r}{ij} \|_{L^2(\Omega_T)},
\quad \| \app{r}{ijk} \|_{L^2(\Omega_T)} = \mathcal O (h^5).
$$
This shows that the lowest order part in \eqref{eq_term1} does not contain any terms with a factor $\app{r}{i}, \app{r}{ij}$ or $\app{r}{ijk}$. 
On the other hand we know likewise as in section \ref{sec_q_2} that 
$$
\supp(\app{w}{ij}_N) \cap \supp(\app{v}{0}) = \emptyset,
$$
for a small enough $\sigma >0$. 
And hence the integral in \eqref{eq_term1} does not contain any terms with a factor $\app{w}{ij}_N$. 
We have furthermore  that 
$$
\supp(\app{w}{ijk}_N) \cap \supp(\app{v}{0}) = \emptyset,
$$
for a small enough $\sigma >0$. This reason for this essentially the same.
The solution $\app{w}{ijk}_N$ is such that 
$$
\supp{\app{w}{ijk}_N} \Subset J^+(p_0),
$$
when $\sigma > 0$ is small enough,  since the solution $\app{w}{ijk}_N$ is due to sources located 
at the intersection points $p_1,\dots,p_N$ of the geodesics $\app{\gamma}{j}$, $j=1,2,3,4$. 
But the intersection points are such that 
$$
p_1,\dots,p_N \notin J_\delta^-(p_0) \cap J^+(q_1) \setminus B(q_1,r),
$$
which again follows directly from 
Lemmas \ref{lem_gamma_1_future}, \ref{lem_gamma_1_no_intersections} and \ref{lem_gamma_2_3_4_no_intersections}.
The solution $\app{v}{0}$ can on the other hand be thought as propagating backwards in time from a source located at $q_0$,
so that $\supp(\app{v}{0}) \subset J^-_\delta(q_0)$, when the source is supported in a small enough neighbourhood of $q_0$.
It follows now that
$$
\supp(\app{w}{ijk}_N) \cap \supp(\app{v}{0}) = \emptyset,
$$
for a small enough $\sigma >0$. 
We thus see that \eqref{eq_term1} holds.
A similar analysis shows that
\begin{equation} \label{eq_term2}
\begin{aligned}
\int_{\Omega_T} 2q_2 \app{w}{ij} \app{w}{kl} \app{v}{0}\, dV 
=
\int_{\Omega_T} 2q_2 \app{\hat w}{ij}   \app{\hat w}{kl} \app{\hat v}{0}\, dV  + \mathcal O (h^5).
\end{aligned}
\end{equation}
By \eqref{eq_c2c3_def}, \eqref{eq_theta2_theta3} and \eqref{eq_beta_2},
we see that the form of the lowest order terms are 
\begin{align*}
\app{\hat v}{j} &= e^{\tfrac{i}{h} \app{\Upsilon}{j}} \app{a}{j}_0  + \mathcal O(h), \\
\app{\hat w}{jk} &= e^{ \tfrac{i}{h} \app{\Upsilon}{jk}} q_2\Upsilon^\sharp_{jk} \app{a}{j}_0 \app{a}{k}_0 h^2  + \mathcal O(h^3), \\
\app{\hat w}{jkl} &= e^{ \tfrac{i}{h} \app{\Upsilon}{jkl}} 
q^2_2 \Upsilon^\sharp_{jkl} 
[ \Upsilon^\sharp_{jl} + \Upsilon^\sharp_{jk} + \Upsilon^\sharp_{kl}] \app{a}{j}_0 \app{a}{k}_0 \app{a}{l}_0 
 h^4  + \mathcal O(h^5).
\end{align*}
We now see from \eqref{eq_term1} and \eqref{eq_term2} that \eqref{eq_J0} holds.

\medskip
\noindent
We are now ready to prove that $q_2^3 = \tilde q^3_2$. The proof
is similar to that of Propositions \ref{prop_q2} in section \ref{sec_q_2}.

\begin{proposition} \label{prop_q32}
Assume that $S = \tilde S$. Then $q^3_2 = \tilde q^3_2$.
\end{proposition}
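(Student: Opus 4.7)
The plan is to mimic the argument of Proposition \ref{prop_q2}, but using the fourth order integral identity \eqref{eq_J_minus_J} in place of the third order one, the higher order WKB interaction solutions $\app{w}{ijk}$ from Proposition \ref{prop_WKB_source_GBs_2}, and the five reparametrised Gaussian beams $\app{v}{0},\ldots,\app{v}{4}$ constructed via the coefficients $\tilde\kappa_j$ of Lemma \ref{lem_xis_lin_indep_2}. The starting point is the hypothesis $S=\widetilde S$, which via Lemma \ref{lem_iid_4th} gives $J-\widetilde J=0$, and the goal is to extract the leading-order term in $h$ and invoke stationary phase.

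First I would verify that the leading order of $J$, as computed in \eqref{eq_J0}, is
\[
J_0 = \int_{\Omega_T} q_2^{3} \,e^{i(\app{\Upsilon}{0}+\app{\Upsilon}{1}+\app{\Upsilon}{2}+\app{\Upsilon}{3}+\app{\Upsilon}{4})/h}\, \mathbf{\Upsilon}^\sharp\,
\app{\bar a}{0}_0\app{\bar a}{1}_0\app{a}{2}_0\app{a}{3}_0\app{a}{4}_0\,dx + \mathcal{O}(h),
\]
with $\mathbf{\Upsilon}^\sharp$ as in Lemma \ref{lem_Upsilon_asymptotics}. The cubic factor of $q_2$ comes from the fact that each product $\app{v}{i}\app{w}{jkl}$ and $\app{w}{ij}\app{w}{kl}$ contains two factors of $q_2$ from the leading amplitudes of the WKB interaction solutions (by \eqref{eq_c2c3_def} and \eqref{eq_beta4_beta5}), to which the outer $q_2$ from the integrand adds a third. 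Then, because the source functions $\app{f}{j}$ constructed as in \eqref{eq_specific_GB_source_prob} are determined by the geometry and by the coefficients in $U$, which agree between the two systems by Lemma \ref{lem_coeff_determined_in_U}, the phases $\app{\Upsilon}{j}$, the amplitudes $\app{a}{j}_0$, and the factor $\mathbf{\Upsilon}^\sharp$ are the same for $J_0$ and $\widetilde J_0$. Hence
\[
0 = h^{-4}(J_0-\widetilde J_0) = \int_{\Omega_T} e^{i\psi/h}\,\big(q_2^3-\widetilde q_2^3\big)\,\mathbf{\Upsilon}^\sharp\,\app{\bar a}{0}_0\app{\bar a}{1}_0\app{a}{2}_0\app{a}{3}_0\app{a}{4}_0\,dx + \mathcal{O}(h),
\]
where $\psi := \sum_{j=0}^{4}\app{\Upsilon}{j}$.

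Next I apply Proposition \ref{prop_stationary_phase} to this oscillatory integral, following exactly the scheme of Proposition \ref{prop_q2}. At $x_0 = p_0$ one has $\psi(p_0)=0$ since each $\app{\Phi}{j}$ vanishes on $\app{\gamma}{j}$; the critical point condition $\nabla\psi(p_0)=0$ is precisely the linear dependence relation \eqref{eq_lin_dependent_2} of Lemma \ref{lem_xis_lin_indep_2}; and the positivity conditions $\operatorname{Im}\psi\geq 0$ in a neighbourhood and $\operatorname{Im} H\psi(p_0)>0$ follow from \eqref{eq_Phi_cond}, Lemma \ref{lem_Riccati}, and the signs $\tilde\kappa_0,\tilde\kappa_1<0$, $\tilde\kappa_2,\tilde\kappa_3,\tilde\kappa_4>0$ given by Lemma \ref{lem_xis_lin_indep_2} (for sufficiently small $\sigma>0$). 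The strict positivity of the Hessian also gives $\nabla\psi\neq 0$ away from $p_0$ on the compact support of the amplitudes. Multiplying by $h^{n/2}$ and passing to the limit $h\to 0$ yields
\[
0 = C\,\mathbf{\Upsilon}^\sharp(p_0)\,\big(q_2^3-\widetilde q_2^3\big)(p_0)\,\app{\bar a}{0}_0\app{\bar a}{1}_0\app{a}{2}_0\app{a}{3}_0\app{a}{4}_0(p_0),
\]
with $C\neq 0$. By Lemma \ref{lem_Upsilon_asymptotics}, $\mathbf{\Upsilon}^\sharp(p_0)\sim C\sigma^{-10}\neq 0$ once $\sigma>0$ is small enough, and each leading amplitude $\app{a}{j}_0(p_0)$ is nonzero by construction. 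Therefore $q_2^3(p_0)=\widetilde q_2^3(p_0)$, and since $q_2$ and $\widetilde q_2$ are real we conclude $q_2(p_0)=\widetilde q_2(p_0)$. Since $p_0\in D\setminus U$ was arbitrary and $q_2=\widetilde q_2$ already holds on $U$ by Lemma \ref{lem_coeff_determined_in_U}, this gives $q_2=\widetilde q_2$ on all of $D(U)$.

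The main technical obstacle I anticipate is verifying rigorously that only the expected product-of-five-amplitudes terms contribute at order $h^4$ in \eqref{eq_J_minus_J}. There are many terms of type $\app{w}{ij}\app{w}{kl}\app{v}{0}$ and $\app{v}{i}\app{w}{jkl}\app{v}{0}$ whose formal amplitudes begin at different orders of $h$, and in addition the pieces $\app{w}{ij}_N$, $\app{w}{ijk}_N$ coming from secondary geodesic intersections must be shown not to contribute. For the latter one uses, as in Section \ref{sec_q_2}, that $\supp(\app{w}{ij}_N),\supp(\app{w}{ijk}_N)\subset J^+(p_0)$ while $\supp(\app{v}{0})\subset J^-(q_0)$, so by Lemmas \ref{lem_gamma_1_future}, \ref{lem_gamma_1_no_intersections} and \ref{lem_gamma_2_3_4_no_intersections} the supports are disjoint for small $\sigma$; and for the remainders the bounds $\|\app{r}{i}\|_{L^2},\|\app{r}{ij}\|_{L^2},\|\app{r}{ijk}\|_{L^2}=\mathcal{O}(h^5)$ (from Lemma \ref{lem_GB_solves_wave_eq} and Propositions \ref{prop_WKB_source_GBs}, \ref{prop_WKB_source_GBs_2} with $K$ chosen large enough) ensure they do not affect $J_0$.
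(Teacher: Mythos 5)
Your proposal is correct and follows essentially the same route as the paper: extract the order-$h^4$ term $J_0$ from the fourth-order identity of Lemma \ref{lem_iid_4th}, discard the $N$-parts and remainders by the support/estimate arguments, and apply stationary phase at $p_0$ with the phase $\sum_{j=0}^4\app{\Upsilon}{j}$, using Lemma \ref{lem_xis_lin_indep_2} for the critical point and Lemma \ref{lem_Upsilon_asymptotics} for the nonvanishing of $\mathbf{\Upsilon}^\sharp$. No substantive differences from the paper's own proof.
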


\begin{proof}
Since  $S = \tilde S$ implies that $\p^3_{\eps_1 \eps_2 \eps_3 \eps_4} (S - \tilde S)|_{\eps=0}=0$,
we have that \eqref{eq_J_minus_J} holds. Looking at the lowest order terms in \eqref{eq_J_minus_J}
gives us, using \eqref{eq_J0}, that 
\begin{align*}
0=J_0 - \tilde J_0 = 
\int_{\Omega_T} (q^3_2 - \tilde q_2^3 )  e^{ \frac{i}{h} (\app{\Upsilon}{0} +\app{\Upsilon}{1} + \app{\Upsilon}{2} + \app{\Upsilon}{3}+ \app{\Upsilon}{4}) } 
\mathbf \Upsilon^\sharp
\app{\bar a}{1}_{0} \ap{2}_{0} \ap{3}_{0} \ap{4}_{0} \app{\bar a}{0}_{0} \,dx.
\end{align*}
We can now apply Proposition \ref{prop_stationary_phase} as in the proof of Proposition
\ref{prop_q2}. 
Denoting the phase function by 
$$
\psi:= \app{\Upsilon}{0} +\app{\Upsilon}{1} + \app{\Upsilon}{2} + \app{\Upsilon}{3}+ \app{\Upsilon}{4}
= \tilde \kappa_0\overline{\app{\Phi}{0}} +\tilde  \kappa_1 \overline{\app{\Phi}{1}} + \tilde  \kappa_2 \app{\Phi}{2} 
+ \tilde  \kappa_3 \app{\Phi}{3} + \tilde  \kappa_4 \app{\Phi}{4}. 
$$
We need to check the conditions of the stationary phase Lemma \ref{prop_stationary_phase}. 
The phase functions $\app{\Phi}{j}$ vanish on the 
corresponding geodesics because of \eqref{eq_Phi}.  Moreover
$$
\nabla \psi( p_0) = \tilde \kappa_0\app{\xi}{0} +\tilde  \kappa_1\app{\xi}{1} 
+ \tilde  \kappa_2 \app{\xi}{2} +\tilde  \kappa_3 \app{\xi}{3} +\tilde  \kappa_4 \app{\xi}{4} = 0,
$$
because of \eqref{eq_lin_dependent_2}.  Thus $ \nabla \psi( p_0) = 0$.
For $x \in X$, we need there to hold
\begin{align*}
\Im \psi= \Im ( \tilde \kappa_0 \overline{ \app{\Phi}{0}} + \tilde \kappa_1 \overline{\app{\Phi}{1}} +  \tilde \kappa_2\app{\Phi}{2} 
+  \tilde \kappa_3\app{\Phi}{3} +  \tilde \kappa_4\app{\Phi}{4})  \geq 0.
\end{align*}
This is true, because of equation \eqref{eq_Phi_cond} and the choices
$\tilde \kappa_0, \tilde\kappa_1 <0$, $\tilde \kappa_2,\tilde \kappa_3,\tilde \kappa_4 > 0$.
For the Hessian we have by linearity,  that 
$$
\Im H\psi(p_0) = \Im ( \tilde\kappa_0 H \overline{\app{\Phi}{0}} + \tilde\kappa_1 H\overline{ \app{\Phi}{1}} 
+  \tilde\kappa_2 H \app{\Phi}{2} +  \tilde\kappa_3 H\app{\Phi}{3}+  \tilde\kappa_4 H\app{\Phi}{4})(p_0)  > 0,
$$
which holds because of Lemma \ref{lem_Riccati}, and since $\tilde \kappa_0,\tilde \kappa_1 <0$, 
$\tilde \kappa_2,\tilde \kappa_3,\tilde \kappa_4 > 0$. 
Finally note that $p_0$ is a local minima $\Im \psi$, and since $H \Im\psi(p_0) > 0$,
so that
\begin{align*}
\Im \nabla \psi= \Im ( \tilde \kappa_0 \nabla \overline{\app{\Phi}{0}} + \tilde \kappa_1 \nabla \overline{\app{\Phi}{1}}
+ \tilde \kappa_2 \nabla \app{\Phi}{2} +\tilde  \kappa_3 \nabla \app{\Phi}{3}  + \tilde  \kappa_4 \nabla \app{\Phi}{4})   
\neq 0,
\quad \text{ in } X \setminus \{  p_0 \},
\end{align*}
and hence $\nabla \psi \neq 0$ in $X \setminus \{  p_0 \}$.
We thus see that $\psi$ satisfies the conditions of Proposition \ref{prop_stationary_phase}.
Using Proposition \ref{prop_stationary_phase} we  now have  that
$$
0 = h^{n/2}(J_0 - \tilde J_0) \to C \mathbf{\Upsilon}^\sharp (q^3_2 - \tilde q^3_2)(p_0), 
$$
as $h \to 0$,
with some non-zero constant $C \in \C$. Lemma \ref{lem_Upsilon_asymptotics} implies
$$
\mathbf{\Upsilon}^\sharp(p_0) \neq 0
$$
and it follows that  $q^3_2(p_0) = \tilde q^3_2(p_0)$.
\end{proof}

\subsection{Obtaining information about $q_1$ and $F$} \label{sec_q_1}

Here we prove the that $q_1$ and $\tilde q_1$ are related as Theorem \ref{thm_thm1} claims.
In this section we revert back to the solutions $\app{v}{i}$ and $\app{w}{ij}$ that are  constructed subsection
\ref{sec_GBs_and_geodesics}. The solutions $\app{v}{i}$ are thus given by \eqref{eq_GB_reparam_2} and
$\app{w}{ij}$ by \eqref{eq_wij_3}.

First we will show that $\ap{0}_1 (p_0) = \app{\tilde a}{0}_1(p_0)$, where these are the first order parts of the amplitude
expansion \eqref{eq_GB_amplitude} of the solutions $ \app{v}{0}$, and $ \app{\tilde v}{0}$. We can then use 
\eqref{eq_a_integrals} to obtain  information about $q_1$. At the end of the section
we show how to obtain information about $F$ and $\tilde F$.

\medskip
\noindent
In order to show that $\ap{0}_1 (p_0) = \app{\tilde a}{0}_1(p_0)$ we use the first order terms in $h$
of the integral identity \eqref{eq_I_0}, i.e. $I_1$ and $R_1$. From 
\eqref{eq_iid_zero}, \eqref{eq_I_0}  and \eqref{eq_I0_I1_R1}, we obtain the identity
$$
I_1  - \tilde I_1 = \tilde R_1 - R_1.
$$
From \eqref{eq_R1} it follows that the geometry of $N$ and $q_2$ determine $R_1$. Since we know that $q_2 = \tilde q_2$ by 
Proposition \ref{prop_q32}, we have that $R_1 - \tilde R_1 = 0$, and hence
\begin{equation} \label{eq_I1_tildeI1}
\begin{aligned}
I_1  - \tilde I_1 = 0.
\end{aligned}
\end{equation}

\begin{lemma} \label{lem_a1}
Let $ p_0$ be the intersection of $ \app{\gamma}{0}$ and $ \app{\gamma}{1}$ as in subsection \ref{sec_geom_setup}.
Then  $S = \tilde S$ implies that  $\ap{0}_1 ( p_0) = \app{\tilde a}{0}_1( p_0)$. %
\end{lemma}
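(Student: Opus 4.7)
My plan is to start from the identity $I_1 - \tilde I_1 = 0$ of \eqref{eq_I1_tildeI1}, apply stationary phase as in the proof of Proposition \ref{prop_q2}, and then isolate $\ap{0}_1(p_0)$ by letting the geometric parameter $\sigma$ (from the definitions of $\app{\gamma}{2}, \app{\gamma}{3}$) tend to zero.

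Since $q_2 = \tilde q_2$ by Proposition \ref{prop_q32}, and since the phases $\app{\Psi}{j}$, the factors $\Psi^\sharp_{ij}$, and the leading amplitudes $\ap{j}_0$ depend only on the metric and on the common source data in $U$ (which is the same for both problems by Lemma \ref{lem_coeff_determined_in_U}), the expression \eqref{eq_I1} for $I_1 - \tilde I_1$ reduces to
\begin{equation*}
0 = \int_{\Omega_T} q_2^2 \, e^{i\psi/h} \,(\Psi^\sharp_{12}+\Psi^\sharp_{13}+\Psi^\sharp_{23})\,\Lambda\, dx,
\end{equation*}
where $\psi := \sum_{j=0}^3 \app{\Psi}{j}$ and $\Lambda$ is the linear combination of the four differences $\Delta_j := \ap{j}_1 - \app{\tilde a}{j}_1$, $j=0,1,2,3$, obtained from the parenthesis in \eqref{eq_I1} by replacing each $\ap{j}_1$ (or its conjugate for $j=0,1$) by the corresponding $\Delta_j$ (or $\overline{\Delta_j}$), with coefficients built from the common factors $\ap{k}_0$ and $|\kappa_j|^{-1}$. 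For each fixed small $\sigma > 0$ the phase $\psi$ satisfies the hypotheses of Proposition \ref{prop_stationary_phase} with $p_0$ as its unique stationary point, exactly as verified in the proof of Proposition \ref{prop_q2}. Stationary phase, together with $q_2(p_0)\neq 0$ and Lemma \ref{lem_Phis_nonzero}, therefore yields the scalar identity $\Lambda(p_0) = 0$ for every sufficiently small $\sigma > 0$.

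To isolate $\Delta_0(p_0)$, I would multiply $\Lambda(p_0) = 0$ by $|\kappa_0| = \sigma^2$ and send $\sigma \to 0$. By Lemma \ref{lem_xis_lin_indep} the coefficients $|\kappa_j|$ for $j=1,2,3$ remain bounded away from zero, so $|\kappa_0|/|\kappa_j| = O(\sigma^2)$. Since the Gaussian beams $\app{v}{2}, \app{v}{3}$ and their leading amplitudes depend smoothly on the tangent data $\app{\xi}{2},\app{\xi}{3}$ and converge to the corresponding objects of $\app{v}{1}$ as $\sigma \to 0$, and since each $\Delta_j(p_0)$ is given by the bounded line integral \eqref{eq_a_integrals}, the three terms with $j\geq 1$ in $|\kappa_0|\Lambda(p_0)$ vanish in the limit. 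The remaining $j=0$ term tends to $\overline{\Delta_0}(p_0)\cdot\bar a^{(1)}_0(p_0)\,(\ap{1}_0(p_0))^{2}$, and $\ap{1}_0(p_0)=(\det Y(p_0))^{-1/2}\neq 0$. This forces $\Delta_0(p_0) = 0$, which is the claim.

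The principal obstacle is the decoupling of $\Delta_0(p_0)$ from the three companion differences $\Delta_1(p_0), \Delta_2(p_0), \Delta_3(p_0)$: four unknowns enter the stationary-phase identity, and only the small asymmetry $|\kappa_0|/|\kappa_j|=O(\sigma^2)$ lets me single out $\Delta_0(p_0)$. Making this rigorous requires the uniform boundedness as $\sigma \to 0$ of the $\sigma$-dependent amplitudes along the degenerating geodesics $\app{\gamma}{2},\app{\gamma}{3}$ and of the differences $\Delta_2(p_0),\Delta_3(p_0)$ themselves, which follows from the smooth dependence of the Gaussian beam construction on the initial tangent data together with the explicit formula \eqref{eq_a_integrals}.
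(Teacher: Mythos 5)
Your proposal is correct and follows essentially the same route as the paper: starting from $I_1-\tilde I_1=0$, applying stationary phase to obtain a four-term linear relation among the differences $\ap{j}_1-\app{\tilde a}{j}_1$ at $p_0$, and then exploiting $|\kappa_0|=\sigma^2$ versus $|\kappa_j|\sim 1-s_0$ for $j=1,2,3$ so that the $j=0$ coefficient dominates as $\sigma\to 0$. Your explicit remark that the companion differences and the $\sigma$-dependent amplitudes must remain bounded as $\sigma\to 0$ is a point the paper leaves implicit, but the argument is otherwise identical.
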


\begin{proof}
Let $\psi$ be the function
$$
\psi:= \app{\Psi}{0} +\app{\Psi}{1} + \app{\Psi}{2} + \app{\Psi}{3}, %
$$
as in the proof of Proposition \ref{prop_q2}. Expanding \eqref{eq_I1_tildeI1} using \eqref{eq_I1} gives
\begin{small}
\begin{align*}
0 =
\int q^2_2 e^{ i \psi }{h}
(\Psi^\sharp_{23} +\Psi^\sharp_{13} +\Psi^\sharp_{12}) 
\big(
& \tfrac{1}{|\kappa_0|} (\app{\bar a}{0}_1 - \app{ \bar{\tilde a} }{0}_1) \app{\bar a}{1}_0 \ap{2}_0 \ap{3}_0
+ \tfrac{1}{|\kappa_1|} \app{\bar a}{0}_0 (\app{\bar a}{1}_1 -\app{ \bar{\tilde a}}{1}_1) \ap{2}_0 \ap{3}_0 \\
+&\tfrac{1}{|\kappa_2|} \app{\bar a}{0}_0  \app{\bar a}{1}_0 (\ap{2}_1 - \app{\tilde a}{2}_1) \ap{3}_0
+ \tfrac{1}{|\kappa_3|} \app{\bar a}{0}_0  \app{\bar a}{1}_0  \ap{2}_0 (\ap{3}_1 - \app{\tilde a}{3}_1)
\big)
\,dx. 
\end{align*}
\end{small}
Notice that the amplitude term $\app{a}{j}_1$ involves a power of $ \frac{ h }{ |\kappa_j| }$
because of the rescaling, so that we get additional factors of $|\kappa_j|$ in the above expression.
We evaluate the above integral using Proposition \ref{prop_stationary_phase}. The
phase function $\psi$ is the same as in the proof of Proposition \ref{prop_q2},
and satisfies thus the requirements of the stationary phase result of Proposition \ref{prop_stationary_phase}.
We can thus apply the method of stationary phase to the above integral, 
as was done in Proposition \ref{prop_q2}, by taking the limit $h \to 0$. 
This gives that
\begin{align*}
\big[ 
c_1 (\app{\bar a}{1}_1 - \app{\bar{\tilde a}}{1}_1) 
+
c_2 
(\ap{2}_1 - \app{\tilde a}{2}_1)
+
c_3 (\ap{3}_1 - \app{\tilde a}{3}_1) 
+ 
c_0(\app{\bar a}{0}_1 - \app{\bar{\tilde a}}{0}_1) 
\big]
(p_0) = 0,
\end{align*}
where the $c_j$ are 
\begin{align*}
c_1 &= \tfrac{1}{|\kappa_1|} \big[(\Psi^\sharp_{12} + \Psi^\sharp_{23} + \Psi^\sharp_{13}) \app{\bar a}{0}_0 \ap{2}_0 \ap{3}_0\big](p_0),  \\
c_2 &= \tfrac{1}{|\kappa_2|} \big[(\Psi^\sharp_{12} + \Psi^\sharp_{23} + \Psi^\sharp_{13}) \app{\bar a}{0}_0 \app{\bar a}{1}_0 \ap{3}_0\big](p_0),   \\
c_3 &= \tfrac{1}{|\kappa_3|} \big[(\Psi^\sharp_{12} + \Psi^\sharp_{23} + \Psi^\sharp_{13}) \app{\bar a}{0}_0 \app{\bar a}{1}_0 \ap{2}_0\big](p_0),   \\
c_0 &= \tfrac{1}{|\kappa_0|} \big[(\Psi^\sharp_{12} + \Psi^\sharp_{23} + \Psi^\sharp_{13}) \app{\bar a}{1}_0 \ap{2}_0 \ap{3}_0\big](p_0).
\end{align*}
Next we work out the asymptotics of the coefficients $c_j$,  when $\sigma \to 0$.
Note that the $\ap{j}_0$ are independent of $\sigma$. Using Lemmas \ref{lem_xis_lin_indep} and 
\ref{lem_Phis_nonzero}, we have that
\begin{equation} \label{eq_c0}
\begin{aligned}
c_0 \sim C' \tfrac{1}{|\kappa_0|} ( \Psi^\sharp_{12} + \Psi^\sharp_{23} + \Psi^\sharp_{13})  \sim C \sigma^{-6}, %
\end{aligned}
\end{equation}
where $C,C' \neq 0$. Furthermore
\begin{equation} \label{eq_c1}
\begin{aligned}
c_1 \sim  C'\tfrac{1}{|\kappa_1|} ( \Psi^\sharp_{13} + \Psi^\sharp_{23} + \Psi^\sharp_{12}) \sim C \sigma^{-4},
\end{aligned}
\end{equation}
where $C,C' \neq 0$. 
Similarly 
\begin{equation} \label{eq_c2c3}
\begin{aligned}
c_2 \sim C \sigma^{-4}, \qquad  
c_3 \sim C' \sigma^{-4}, \qquad  
\end{aligned}
\end{equation}
where $C,C' \neq 0$. 
It follows from \eqref{eq_c0}, \eqref{eq_c1} and \eqref{eq_c2c3}
that we can make $c_0$ arbitrarily big in comparison to  $c_1, c_2$ and $c_3$, by choosing
a small $\sigma > 0$. We can therefore deduce that 
$$
\ap{0}_1( p_0)  =  \app{\tilde a}{0}_1 ( p_0),
$$
which is what we wanted to show.
\end{proof}

\noindent
We will now utilize the fact that  $\ap{0}_1  =  \app{\tilde a}{0}_1$ to obtain information
about the relation between $q_1$ and $\tilde q_1$.

\begin{proposition} \label{prop_q1}
Assume that $S = \tilde S$. Then 
$
q_1 = \tilde q_1 + 2 \tilde q_2 \varphi, %
$
where $\varphi := \tilde u_0 - u_0 \in C^\infty(N)$, and $u_0$ and $\tilde u_0$ are given by \eqref{eq_u0}. 
\end{proposition}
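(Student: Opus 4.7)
The plan is to upgrade the pointwise identity $\ap{0}_1(p_0) = \app{\tilde a}{0}_1(p_0)$ from Lemma \ref{lem_a1} to the pointwise identity $Q \equiv \tilde Q$ on all of $D(U)$, where $Q := q_1 + 2q_2 u_0$ and $\tilde Q := \tilde q_1 + 2\tilde q_2 \tilde u_0$ are the potentials of the linearized equations \eqref{eq_vi}. Since Proposition \ref{prop_q32} has already given $q_2 = \tilde q_2$ throughout $D(U)$, one has
$$Q - \tilde Q = (q_1 - \tilde q_1) + 2q_2(u_0 - \tilde u_0) = (q_1 - \tilde q_1) - 2\tilde q_2 \varphi,$$
so $Q \equiv \tilde Q$ on $D(U)$ is exactly equivalent to the claimed relation $q_1 = \tilde q_1 + 2\tilde q_2 \varphi$. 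Smoothness of $\varphi = \tilde u_0 - u_0$ is inherited from that of $u_0$ and $\tilde u_0$.

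The first step will be to promote Lemma \ref{lem_a1} to an equality along an open segment of $\app{\gamma}{0}$. By Lemma \ref{lem_variation_of_geodesics}(i) the geodesic $\app{\gamma}{0}$, together with the beam $\app{v}{0}$ and its source $\app{f}{0}$, remains fixed while the intersection parameter $\hat s_0$ varies in an interval about $s_0$ and a fresh companion geodesic $\app{\hat\gamma}{1}$ is constructed for each $\hat s_0$. Rerunning the proof of Lemma \ref{lem_a1} with $p_0$ replaced by $\app{\gamma}{0}(\hat s_0)$ therefore yields
$$\ap{0}_1\bigl(\app{\gamma}{0}(\hat s_0)\bigr) = \app{\tilde a}{0}_1\bigl(\app{\gamma}{0}(\hat s_0)\bigr) \qquad \text{for all } \hat s_0 \in (s_0 - \eps, s_0 + \eps).$$

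The second step will exploit the explicit split \eqref{eq_a_integrals}: writing $\ap{0}_{1,0}(s) = a^\sharp_{1,0}(s) + a^\flat_{1,0}(s)$ along $\app{\gamma}{0}$, the geometric piece $a^\sharp_{1,0}$ depends only on $\det Y$ and on $\Box_g a_0$, both of which are determined by the metric and by the common phase and principal amplitude data fixed before any potential enters. Hence $a^\sharp_{1,0} = \tilde a^\sharp_{1,0}$ identically, and the preceding equality collapses to the one-dimensional integral identity
$$\int_{s_*}^{s} (Q - \tilde Q)\bigl(\app{\gamma}{0}(t)\bigr)\,dt = 0$$
for $s$ in the open interval, where $s_*$ is the initial point of the transport ODE. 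Differentiating in $s$ and evaluating at $s = s_0$ gives $(Q - \tilde Q)(p_0) = 0$. Since $p_0 \in D(U)\setminus U$ was arbitrary, and since $Q = \tilde Q$ on $U$ is immediate from $q_1 = \tilde q_1$, $q_2 = \tilde q_2$, $u_0|_U = \tilde u_0|_U$ (using Lemma \ref{lem_coeff_determined_in_U} together with $S(0) = \tilde S(0)$), continuity then furnishes $Q \equiv \tilde Q$ on $D(U)$, which is what we need.

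The main obstacle I anticipate is justifying that the subprincipal amplitude $a^\sharp_{1,0}$ is genuinely independent of the unknown triple $(q_1,q_2,F)$. This rests on two points: (i) the Riccati system of Lemma \ref{lem_Riccati} determines the matrix $Y$, and hence $\det Y$ and the leading amplitude $a_0 = (\det Y)^{-1/2}$, from the geometry alone; and (ii) the sources $\app{f}{0}$ coincide for the two problems, as pointed out in the remark following \eqref{eq_v_f_rem}, since the coefficients match on $U$. Once these are in hand, the proof reduces to a one-dimensional inverse problem along $\app{\gamma}{0}$ solved by differentiation of an integral identity.
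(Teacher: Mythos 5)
Your proposal is correct and follows essentially the same route as the paper's own proof: both vary the intersection point along $\app{\gamma}{0}$ via Lemma \ref{lem_variation_of_geodesics}, apply Lemma \ref{lem_a1} at each such point, use the split \eqref{eq_a_integrals} with the observation that $a^\sharp_{1,0}$ is purely geometric, and differentiate the resulting integral identity in the geodesic parameter to recover $Q=\tilde Q$ pointwise, then invoke $q_2=\tilde q_2$ from Proposition \ref{prop_q32}. Your extra remarks on the coincidence of sources and on extending to $D(U)$ by continuity are consistent with the paper's argument.
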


\begin{proof}
Let us abbreviate $ \app{a}{0}_1$ with $a_1$ for clarity. 
Recall firstly that we can vary the intersection point $p_0 = (\hat s_0,0)$ on the geodesic
$\app{\gamma}{0}$, along an interval $ (p_0 -\eps , p_0 + \eps)$, by Lemma \ref{lem_variation_of_geodesics}.
Furthermore  we have by the equations in \eqref{eq_a_integrals} that 
\begin{small}
\begin{equation*} 
\begin{aligned}
a_{1,0}(\hat s_0 ,0) &= a^\sharp_{1,0} ( \hat s_0 ,0) + a^\flat_{1,0}(\hat s_0,0) 
= \frac{ 1 }{ 2 (\det Y(\hat s_0))^{1/2} } \Big[ \int_{s_0}^{\hat s_0} \Box_g a_0 (t,0) (\det Y(t))^{1/2} + Q(t,0) \,dt \Big],
\end{aligned}
\end{equation*}
\end{small}
in the Fermi coordinates $(s,z')$, and where $Q = q_1 + 2q_2u_0$ in accordance with \eqref{eq_vi}. 
Then $a^\flat_{1,0}(\hat s_0,0)$ consists of the integral of $Q$, see \eqref{eq_a_integrals}.
To obtain the  value of $Q$ at $\hat s_0$, we differentiate, which gives
\begin{align*}
Q( \hat s_0 ,0) =  \p_{\hat s_0} \big (( a_{1,0}(\hat s_0 ,0) -a^\sharp_{1,0}(\hat s_0 ,0) ) 2 (\det Y(\hat s_0))^{1/2}\big).  
\end{align*}
Here $a^\sharp_{1,0}$ does not depend on the potentials and is determined by the geometry,
and thus $a^\sharp_{1,0} (\hat s_0,0) = \tilde a^\sharp_{1,0} ( \hat s_0,0)$ . 
We also know by Lemma \ref{lem_a1} that $a_{1,0} ( \hat s_0,0)= \tilde a_{1,0}( \hat s_0,0)$,
since $a_1(p_0) = \tilde a_1(p_0)$. 
We can therefore conclude that $Q(\hat s_0,0) = \tilde Q(\hat s_0,0) $ or that 
$$
(q_1 + 2q_2 u_0)( p_0) = (\tilde q_1 + 2 \tilde q_2 \tilde u_0)( p_0). 
$$
By varying the point $ p_0$ and setting $\varphi := \tilde u_0 - u_0$, we see that the claim holds.
\end{proof}

\noindent
The final step in proving Theorem \ref{thm_thm1} is to prove the part concerning 
the source term $F$. We do this in the following proposition.

\begin{proposition}\label{prop_F}
Assume that $S = \tilde S$. Then we have that $ F = \tilde F - \Box_g \varphi - \tilde q_1 \varphi -  \tilde q_2\varphi^2$,
where $\varphi$ is as in Proposition \ref{prop_q1}.
\end{proposition}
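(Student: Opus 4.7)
The plan is to derive the identity as a purely algebraic consequence of the two relations already established in Propositions \ref{prop_q32} and \ref{prop_q1}, namely $q_2 = \tilde q_2$ and $q_1 = \tilde q_1 + 2\tilde q_2 \varphi$. By Lemma \ref{lem_reduction} we may (and do) assume $\phi_1 = \phi_2 = 0$, so $u_0$ and $\tilde u_0$ both have zero Cauchy data and satisfy
\begin{align*}
\square_g u_0 + q_1 u_0 + q_2 u_0^2 &= F, \\
\square_g \tilde u_0 + \tilde q_1 \tilde u_0 + \tilde q_2 \tilde u_0^2 &= \tilde F.
\end{align*}
Substituting $\tilde u_0 = u_0 + \varphi$ and subtracting the first equation from the second expresses $\tilde F - F$ in terms of $\square_g \varphi$ together with the differences $\tilde q_1 \tilde u_0 - q_1 u_0$ and $\tilde q_2 \tilde u_0^2 - q_2 u_0^2$, which I then simplify using the two coefficient relations.

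\textbf{Key computation.} Using $q_1 = \tilde q_1 + 2 \tilde q_2 \varphi$, I rewrite
\[
\tilde q_1 \tilde u_0 - q_1 u_0 = \tilde q_1 (\tilde u_0 - u_0) - 2\tilde q_2 \varphi\, u_0 = \tilde q_1 \varphi - 2\tilde q_2 u_0 \varphi,
\]
and using $q_2 = \tilde q_2$ together with the factorization $\tilde u_0^2 - u_0^2 = \varphi(\tilde u_0 + u_0) = \varphi(2u_0 + \varphi)$, I get
\[
\tilde q_2 \tilde u_0^2 - q_2 u_0^2 = 2\tilde q_2 u_0 \varphi + \tilde q_2 \varphi^2.
\]
The cross terms $\pm 2\tilde q_2 u_0 \varphi$ cancel, leaving exactly
\[
\square_g \varphi + \tilde q_1 \varphi + \tilde q_2 \varphi^2 = \tilde F - F,
\]
which rearranges to the claimed identity $F = \tilde F - \square_g \varphi - \tilde q_1 \varphi - \tilde q_2 \varphi^2$.

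\textbf{Obstacles.} There is essentially no analytic obstacle at this stage: the entire content of the proposition is already encoded in the two previous propositions, and the remaining work is bookkeeping. The cancellation of the $2\tilde q_2 u_0 \varphi$ terms is precisely the algebraic reason the gauge structure \eqref{gauge_intro} is consistent, and it mirrors (in reverse) the gauge computation carried out in the proof of Lemma \ref{lem_reduction}.
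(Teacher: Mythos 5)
Your proof is correct and is essentially the same argument as the paper's: both use $q_2=\tilde q_2$ and $q_1=\tilde q_1+2\tilde q_2\varphi$ from the preceding propositions together with the equations satisfied by $u_0$ and $\tilde u_0=u_0+\varphi$, and the cancellation of the $2\tilde q_2 u_0\varphi$ cross terms is exactly the step in the paper's expansion of $(\Box_g+\tilde q_1)(u_0+\varphi)+\tilde q_2(u_0+\varphi)^2$. The only cosmetic difference is that you subtract the two equations while the paper expands $\tilde F$ directly.
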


\begin{proof}
From Proposition \ref{prop_q32} we know that
$$
q_2 = \tilde q_2. 
$$
And from Proposition \ref{prop_q1} we also know that
\begin{equation} \label{eq_q1q2u}
\begin{aligned}
 q_1 + 2 q_2 u_0 =  \tilde q_1  + 2 q_2 \tilde u_0 
\quad \Rightarrow \quad
 \tilde q_1 - q_1 = -2q_2 \varphi,
\end{aligned}
\end{equation}
where $\varphi:= \tilde u_0 - u_0$. 
To prove the formula in the claim note that by \eqref{eq_q1q2u}, we have that 
\begin{align*}
\tilde F &= (\Box_g + \tilde q_1) (u_0 +\varphi) + \tilde q_2 (u_0 +\varphi)^2 \\
&=
\Box_g (u_0 +\varphi) + \tilde q_1 (u_0 +\varphi) +  q_2 (u_0^2 +\varphi^2 + 2\varphi u_0) \\
&=
\Box_g (u_0 +\varphi) + \tilde q_1 (u_0 +\varphi) +  q_2 u_0^2 + q_2\varphi^2 + (q_1 - \tilde q_1) u_0 \\
&= 
F + \Box_g \varphi+ \tilde q_1 \varphi+  q_2\varphi^2,
\end{align*}
which shows that the claim holds. 
\end{proof}

\section*{Acknowledgements}
M. Lassas was supported by a AdG project 101097198 of the European Research Council, Centre of Excellence of Research Council of Finland, and the
FAME flagship of the Research Council of Finland (grant 359186).
T.~L. was partly supported by the Academy of Finland (Centre of Excellence in Inverse Modelling and Imaging and FAME Flagship, grant numbers 312121 and 359208).
V.~P. and T.~T. were supported by the Research Council of Finland (Flagship of Advanced Mathematics for Sensing, Imaging and Modelling grant 359186) and by the Emil Aaltonen Foundation.

\appendix
\section{Explicit formulas}\label{sec_explicit}

In this section we  we provide some further details on the computations involved in proving Lemma
\ref{lem_xis_lin_indep} and Lemma \ref{lem_xis_lin_indep_2}.

\medskip
\noindent
We can obtain explicit formulas for the $\kappa_j$ in Lemma \ref{lem_xis_lin_indep},
by performing Gaussian eliminations.  For Lemma \ref{lem_xis_lin_indep} this yields the values 
\begin{small}
\begin{align*}
\kappa_1 = \frac{\sigma^2 (\sqrt{1-\sigma^2}- s_0 )}{\sqrt{1-\sigma^2}-1}, \,\,
\kappa_2 = \frac{ \sigma^2 (s_0 - 1)}{2\sqrt{1-\sigma^2}-1} \pm \frac{\sigma \sqrt{1 - s_0^2}}{2},\,\,
\kappa_3 = \frac{\sigma^2 (s_0-1)}{2\sqrt{1-\sigma^2}-1} \mp \frac{\sigma\sqrt{1-s_0^2}}{2}, 
\end{align*}
\end{small}\\
where the choice of sign corresponds to the choice of sign in the definition of $\app{\xi}{0}$.
To compute the asymptotics of $\kappa_1$ in Lemma \ref{lem_xis_lin_indep}, we write
\begin{align*}
\kappa_1 = - (\sqrt{1-\sigma^2}- s_0 )(\sqrt{1-\sigma^2}+1)
&\sim -(1- \tfrac{\sigma^2}{2}-s_0)(1- \tfrac{\sigma^2}{2}+1) \\
&\sim -2(1-s_0) + \tfrac{\sigma^2}{2} (3- s_0),
\end{align*}
where we used the Taylor expansion  $\sqrt{1- \sigma^2} = 1 - \tfrac{\sigma^2}{2} + O(\sigma^4)$ for small
$\sigma$.
The asymptotics for $\kappa_2$ and $\kappa_3$  can easily be obtained using similar computations.

\medskip
\noindent
We can similarly  obtain explicit formulas for the $\tilde \kappa_j$ in Lemma \ref{lem_xis_lin_indep_2},
by performing Gaussian eliminations. 
For Lemma \ref{lem_xis_lin_indep_2} this yields the value
\begin{align*}
\tilde \kappa_1 &=  \frac{\sigma^2 \left(-\sqrt{1-\sigma^4}+\sqrt{1-\sigma^2} - s_0 + 1\right)}{\sqrt{1-\sigma^2}-1}.
\end{align*}
For $\tilde \kappa_2$ we get 
\begin{align*}
\tilde \kappa_2 &= 
\frac{\sigma^2 \left(-\sqrt{1-\sigma^2}+\sqrt{\sigma^2+1} +  s_0 -1 \right)}{2\sqrt{1-\sigma^2}-2} \pm \frac{\sigma}{2} \sqrt{1-s_0^2},
\end{align*}
and for $\tilde \kappa_3$ we get 
\begin{align*}
\tilde \kappa_3 &= 
\frac{\sigma^2 \left(\sqrt{1-\sigma^2}+\sqrt{\sigma^2+1}+ s_0-3\right)}{2\sqrt{1-\sigma^2}-2} \mp \frac{\sigma}{2} \sqrt{1-s_0^2},
\end{align*}
and where the choice of sign corresponds to the choice of sign in the definition of $\app{\xi}{0}$.
One can check that these choices together with the definitions of the $\app{\xi}{i}$ solve equation  
\eqref{eq_lin_dependent_2}. It is moreover
a straight forward to evaluate the asymptotic formulas in Lemmas \ref{lem_xis_lin_indep_2}
and \ref{lem_xis_lin_indep}, by expanding these suitable using Taylor series in a neighbourhood
of $\sigma = 0$.

\section{Well-posedness for the forward problem} \label{sec_wellposed}

Given a time function $t$ for $N$, let $\Sigma_{t_0}$ denote the Cauchy surface $\{x\in N : t(x)=t_0\}.$ 
In this section all functions are assumed to be spatially compact, meaning that given a function $f:N\to \C$ and $t_0\in \R$, the set $\supp(f\big|_{\Sigma_{t_0}})\subset \Sigma_{t_0}$ is relatively compact.

We define
\[
E^s :=E^s(N) \vcentcolon =
\bigcap_{k=0}^s C^k(t(N), H^{s-k}(\Sigma_\bullet)).
\]
Let us also denote 
\[
N_{T}=t^{-1}([0,T]).
\]
If $U\subset N$ is  open, we denote
\[
E^s(U)=\{u|_U : \ u \in E^s\}
\]
and
\[
E_c^s(U)=\{u\in E^s(U) : \supp(u) \text{ compact}\}.
\]

Suppose $F\in E^s$, $\varphi_0\in H^{s+1}(\Sigma_0)$, and $\varphi_1\in H^{s}(\Sigma_0)$, and assume that there exists a unique solution $u_0\in E^{s+1}$ to the equation
\begin{equation}\label{eq: nonlinear eq for u_0}
\begin{cases}
\square_g u + q_1u + qu_2^p = F, & x\in N_T,\\
u(0)=\varphi_0,\ \p_t u(0) = \varphi_1
\end{cases}
\end{equation}
in $\big(J^+(\supp(\varphi_0))\cup J^+(\supp(\varphi_1)) \cup J^+(\supp(F+f))\big)\cap N_T$.

\begin{proposition}[{\cite[Corollary 12]{Bar15}}]\label{prop_energy_est_lin}
Let $[0,T]\subset t(N)$ and let $K\subset N$ be compact. Let $k\in\N$ and $A\in C^\infty(N)$. Then there exists a constant $C>0$ such that
\begin{equation}\label{eq: solution estimate}
\Vert u \Vert_{H^k(N_T)} 
\leq 
C \left(
\Vert u\big|_{t=0} \Vert_{H^k(\Sigma_0)} + 
\Vert \partial_t u\big|_{t=0} \Vert_{H^{k-1}(\Sigma_0)} + 
\Vert (\square_g + A)u \Vert_{H^{k-1}(N_T)}
\right)
\end{equation}
holds for all $u\in C^\infty(N)$ with $\supp(u)\subset J^+(K)$.
\end{proposition}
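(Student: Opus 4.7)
The plan is to prove this by the standard energy method, bootstrapping from a first order estimate to arbitrary $k$ via commutator arguments, and crucially using the support hypothesis $\supp(u)\subset J^+(K)$ together with finite propagation speed to keep everything in a uniformly compact spatial region.

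First I would establish the case $k=1$. Using the splitting $N = \R\times M$ with $g = -\beta\,dt^2 + h_t$, define the natural energy
\[
E_1(t) = \int_{\Sigma_t}\left(|\partial_t u|^2 + |\nabla_{h_t}u|^2 + |u|^2\right)d\mu_{h_t}.
\]
Multiplying $(\Box_g+A)u$ by $\partial_t u$, integrating over the slab $[0,t]\times M$, and applying the divergence theorem produces an identity of the form $E_1(t) = E_1(0) + \int_0^t Q(s)\,ds$ where $Q(s)$ collects (i) a bilinear form in the first derivatives of $u$ with coefficients controlled by $\beta$, $\partial_t h_t$, and $A$, and (ii) the source pairing $\int_{\Sigma_s}(\Box_g+A)u\cdot\partial_t u$. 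The support assumption together with finite propagation speed implies that for every $t\in[0,T]$, $\supp(u|_{\Sigma_t})$ lies in a compact set $K_t\subset\Sigma_t$ whose union over $t\in[0,T]$ is relatively compact and depends only on $K$; hence the coefficients of $Q$ are uniformly bounded. Cauchy--Schwarz and Grönwall's inequality then yield the $k=1$ case.

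Next I would proceed by induction on $k$ via commutators. Fix a finite collection of vector fields $X_1,\dots,X_N$ on $N$ that spans the tangent bundle on a neighborhood of $\bigcup_{t\in[0,T]}K_t$. For any multi-index $\alpha$ with $|\alpha|\leq k-1$, write $X^\alpha = X_{i_1}\cdots X_{i_{|\alpha|}}$ and compute
\[
(\Box_g+A)\,X^\alpha u = X^\alpha(\Box_g+A)u + [\Box_g+A,\,X^\alpha]\,u,
\]
where the commutator is a differential operator of order $|\alpha|+1$ with smooth coefficients (again uniformly controlled on the relevant compact region). Applying the $k=1$ estimate to each $X^\alpha u$ and summing over $|\alpha|\leq k-1$ bounds $\|u\|_{H^k(N_T)}$ by the Cauchy data at order $k$, the source at order $k-1$, and lower order $H^{k-1}$ norms of $u$ on $N_T$ and its Cauchy data. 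Absorbing the lower order terms by the inductive hypothesis closes the estimate.

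The main obstacle I foresee is bookkeeping at two points. First, keeping the constants in the energy identity uniform in $t\in[0,T]$ requires the support geometry argument above; one has to verify that the auxiliary spatial vector fields used to define $|\nabla_{h_t}u|$ and to realize $X^\alpha$ can be chosen smoothly on a tubular neighborhood of the compact set $\bigcup_t J^+(K)\cap\Sigma_t$. Second, in the induction step one must check that the commutators genuinely produce a term of the form $\|(\Box_g+A)u\|_{H^{k-1}}$ plus strictly lower order remainders, so that no derivative loss occurs; this is automatic because $[\Box_g+A,X]$ is second order for each single vector field $X$, so $[\Box_g+A,X^\alpha]$ is of order $|\alpha|+1$ and its action on $u$ contributes at most $\|u\|_{H^{|\alpha|+1}}\leq\|u\|_{H^k}$, which is absorbed on the left after choosing $T$-intervals short enough and iterating.
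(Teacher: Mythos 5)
The paper does not prove this proposition at all: it is quoted verbatim from B\"ar's work on linear wave equations on globally hyperbolic manifolds (the bracketed citation in the statement is the proof), so there is no in-paper argument to compare against. Your energy-plus-commutator strategy is the standard route to such estimates and is essentially how results of this type are established, so the overall plan is sound. One remark on the geometric step: for a general smooth $u$ with $\supp(u)\subset J^+(K)$ you do not need finite propagation speed (which is a statement about solutions); global hyperbolicity already gives that $J^+(K)\cap t^{-1}([0,T])$ is compact, which is what you actually use to get uniform control of the coefficients.

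The one genuine gap is in your treatment of time derivatives in the induction step. The left-hand side $\Vert u\Vert_{H^k(N_T)}$ is a spacetime norm containing up to $k$ pure time derivatives, while the right-hand side only contains the Cauchy data $u|_{t=0}$ in $H^k$ and $\partial_t u|_{t=0}$ in $H^{k-1}$. If your spanning collection $X_1,\dots,X_N$ includes timelike directions (which it must, to control $\partial_t^j u$ for $j\ge 2$ directly), then applying the $k=1$ estimate to $X^\alpha u$ requires the initial data $X^\alpha u|_{t=0}$ and $\partial_t X^\alpha u|_{t=0}$, which involve $\partial_t^j u|_{t=0}$ for $j\ge 2$; these are \emph{not} among the quantities on the right of \eqref{eq: solution estimate} and must be expressed, using the equation $\partial_t^2 u = \beta^{-1}(\text{spatial operator})u + (\text{l.o.t.}) + (\text{source})$ iteratively, in terms of $u|_{t=0}$, $\partial_t u|_{t=0}$, and traces $\partial_t^j\big((\square_g+A)u\big)|_{t=0}$, the latter being controlled by $\Vert(\square_g+A)u\Vert_{H^{k-1}(N_T)}$ only after invoking a trace inequality. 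Alternatively, you can commute only with spatial vector fields (so the initial-data bookkeeping is trivial), but then the energy method controls only the components of $\Vert u\Vert_{H^k(N_T)}$ with at most one time derivative, and you must recover $\partial_t^j u$ for $2\le j\le k$ from the equation afterwards. Either repair is routine, but as written your induction silently conflates ``Cauchy data of $X^\alpha u$'' with ``the stated Cauchy data of $u$ at order $k$,'' and the estimate does not close without one of these two reductions. The final absorption of the commutator term $\Vert u\Vert_{H^k}$ is better done by Gr\"onwall in the time variable (the commutator contributes $\int_0^t\Vert u(s)\Vert_{H^k(\Sigma_s)}^2\,ds$) than by shrinking $T$, but your short-time-and-iterate variant also works.
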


\begin{proposition}\label{prop: well posedness of nonlinear waves with sources}
Let $s>n/2$ and $p\geq 1$ be an integer. Suppose there exists a unique solution $u_0\in E^{s+1}$ to \eqref{eq: nonlinear eq for u_0}, and let $f\in E^s$. Then there is $\eps>0$ such that when $\Vert f \Vert_{E^s}< \eps$, there is a $C>0$ such that there exists a unique solution $u\in E^{s+1}$ to the equation
\begin{equation}\label{eq: well-posedness for nonlinear eq with sources}
\begin{cases}
\square_g u + q_1u + q_2u^p = F+f, & x\in N_T,\\
u(0)=\varphi_0,\ \p_t u(0) = \varphi_1
\end{cases}
\end{equation}
with $\supp(u)\subset \big(J^+(\supp(\varphi_0))\cup J^+(\supp(\varphi_1)) \cup J^+(\supp(F+f))\big)\cap N_T$. Moreover, the solution $u$ satisfies
\[
\Vert u-u_0\Vert_{E^{s+1}} \leq C \Vert f\Vert_{E^s}.
\]
\end{proposition}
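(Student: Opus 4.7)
\medskip

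\noindent
\textbf{Proof proposal.} The plan is a standard contraction mapping argument set up after subtracting off the background solution $u_0$. First I would write $u = u_0 + w$, so that the nonlinear equation for $u$ becomes an equation for $w$ with zero Cauchy data and source $f$ perturbed by a nonlinear term. Expanding $(u_0+w)^p - u_0^p = p u_0^{p-1} w + R(u_0,w)$, where $R(u_0,w) = \sum_{k=2}^p \binom{p}{k} u_0^{p-k} w^k$ vanishes to second order in $w$, and setting $Q := q_1 + p q_2 u_0^{p-1} \in E^{s+1}$, we obtain
\begin{equation*}
\begin{cases}
(\Box_g + Q) w = f - q_2 R(u_0, w), & x \in N_T,\\
w(0) = 0, \quad \p_t w(0) = 0.
\end{cases}
\end{equation*}
Finite speed of propagation then forces $\supp(w)\subset J^+(\supp(f))\cap N_T$, which together with $\supp(u_0)$ gives the claimed support of $u$.

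Next I would introduce the solution map $\Phi(w) := \tilde w$, where $\tilde w$ is the unique solution (in the sense of linear theory for globally hyperbolic Lorentzian manifolds, cf.\ \cite{Bar15}) of
\[
(\Box_g + Q)\tilde w = f - q_2 R(u_0, w), \qquad \tilde w(0)=\p_t\tilde w(0)=0,
\]
and study $\Phi$ on the ball $B_\delta := \{w\in E^{s+1} : \supp(w)\subset J^+(\supp(f))\cap N_T,\ \|w\|_{E^{s+1}}\leq \delta\}$. Applying the energy estimate of Proposition~\ref{prop_energy_est_lin} slab-wise in time gives
\[
\|\tilde w\|_{E^{s+1}} \leq C\bigl(\|f\|_{E^s} + \|q_2 R(u_0, w)\|_{E^s}\bigr).
\]
Since $s>n/2$, Moser-type estimates yield that $E^{s+1}$ is a multiplicative algebra into $E^s$, so that
\[
\|q_2 R(u_0,w)\|_{E^s} \leq C'\bigl(1+\|u_0\|_{E^{s+1}}^{p-2}\bigr)\|w\|_{E^{s+1}}^2\bigl(1+\|w\|_{E^{s+1}}^{p-2}\bigr),
\]
and similarly $\|q_2(R(u_0,w_1)-R(u_0,w_2))\|_{E^s} \leq C''\delta\,\|w_1-w_2\|_{E^{s+1}}$ on $B_\delta$. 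Choosing first $\delta$ small and then $\eps$ with $C\eps \leq \delta/2$, the map $\Phi$ sends $B_\delta$ into itself and is a strict contraction whenever $\|f\|_{E^s}<\eps$. The Banach fixed point theorem then supplies a unique $w\in B_\delta$, hence a unique solution $u=u_0+w\in E^{s+1}$; the contraction estimate immediately yields $\|w\|_{E^{s+1}}\leq 2C\|f\|_{E^s}$, which is the claimed Lipschitz bound. Uniqueness in the larger class (not just within $B_\delta$) follows by a Gr\"onwall argument applied to the difference of two solutions via the same linear energy estimate, since any second solution $u'$ satisfies a linear wave equation with zero data and a coefficient that differs from $Q$ by an $E^{s+1}$-bounded quantity.

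The main technical obstacle is verifying that the bilinear estimate $\|vw\|_{E^s}\lesssim \|v\|_{E^{s+1}}\|w\|_{E^{s+1}}$, and more generally Moser-type estimates for $R(u_0,w)$, hold in the anisotropic scale $E^s=\bigcap_{k=0}^s C^k(t(N),H^{s-k}(\Sigma_\bullet))$ rather than in an isotropic Sobolev space; this is handled by interpolating the Sobolev multiplication theorem (valid for exponents above $n/2$) across the time index and invoking the local compact support in space (inherited by finite propagation from $\supp(f)$) to reduce to a relatively compact slab, where the $H^{s-k}(\Sigma_\bullet)$ norms are equivalent to those on a fixed compact domain. Finally, the Fr\'echet differentiability of the source-to-solution map used elsewhere in the paper is obtained as a byproduct: the implicit function theorem applied to $\mathcal{N}(f,w):=w-\Phi_f(w)$, whose partial derivative in $w$ at $(0,0)$ is the identity minus a compact (in fact, small) linear map, produces a smooth map $f\mapsto w(f)$ from a neighborhood of $0$ in $E^s$ into $E^{s+1}$, whose derivatives at $f=0$ are exactly the linearized wave problems used in Section~\ref{sec_WKB_interaction} and beyond.
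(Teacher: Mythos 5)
Your argument is correct, but it follows a different route than the paper. You subtract the background solution, write $u=u_0+w$, and run a Picard/contraction iteration for $w$ on a small ball in $E^{s+1}$, then invoke the implicit function theorem afterwards only to get smooth dependence on $f$. The paper instead applies the implicit function theorem on Banach spaces directly: it introduces the space $X^{s+1}=\{u\in E^{s+1}: \square_g u\in E^s\}$ with norm $\Vert u\Vert_{E^{s+1}}+\Vert\square_g u\Vert_{E^s}$, verifies it is Banach, defines $\Phi(f,u)=(u(0)-\varphi_0,\p_t u(0)-\varphi_1,\square_g u+q_1u+q_2u^p-F-f)$, checks $C^1$-regularity via the algebra property of $E^s$ for $s>n/2$, and shows that $D_u\Phi|_{(0,u_0)}$ is a homeomorphism using the linear theory of \cite{Bar15}; the solution map $S$, its Lipschitz bound, and its Fr\'echet differentiability (needed for the higher-order linearizations in the main body) all come out in one stroke. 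The substance is the same in both cases --- linear well-posedness plus the multiplicative structure of $E^s$ --- and your uniqueness argument via the factorization $u_1^p-u_2^p=P(u_1,u_2)(u_1-u_2)$ and the linear equation for the difference matches the paper's. Two points in your write-up deserve more care: the passage from the $H^k(N_T)$ energy estimate of Proposition~\ref{prop_energy_est_lin} to the anisotropic $E^{s+1}$ norm is exactly where the paper instead leans on \cite[Theorem~13 and Corollary~17]{Bar15}, so you should cite the $E^{s+1}$ continuous-dependence statement rather than gesture at a slab-wise argument; and the product/Moser estimates you need are most cleanly obtained from the algebra property of $E^s$ itself (as in \cite[Appendix III]{CB08}), which the paper records explicitly, rather than re-deriving them by interpolation across the time index.
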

\begin{proof}
We consider $p\geq 2$, because if $p=1$ then equation \eqref{eq: well-posedness for nonlinear eq with sources} is linear and the claim reduces to the results of~\cite[Theorem~13 and Corollary~17]{Bar15}, and holds with any $\eps>0$. Without explicitly mentioning it, we will often use the fact that $E^s$ forms an algebra, when $s>n/2$: if $v,w\in E^s$ then the pointwise product $vw\in E^s$. For this fact we refer to~\cite[Appendix III, Definition 3.4 (2) and Definition 3.5]{CB08} and to~\cite[Sec.~2, p.1077]{lassas2025stability}, where a brief proof is given.
Let us define the class $X^{s+1}\subset E^{s+1}$ as
\[
X^{s+1} \vcentcolon = \{ u \in E^{s+1} : \square_g u \in E^s\}
\]
equipped with the norm
\[
\Vert u \Vert_{X^{s+1}} \vcentcolon = \Vert u \Vert_{E^{s+1}} + \Vert \square_g u \Vert_{E^s}.
\]
We verify that $X^{s+1}$ is a Banach space and to that end, let $(u_k)_{k=1}^\infty\subset X^{s+1}$ be a Cauchy sequence.
Since $E^s$ is a Banach space, the sequences $(u_k)_{k=1}^\infty$ and $(\square_g u_k)_{k=1}^\infty$ are Cauchy in $E^{s+1}$ and $E^s$, respectively, and have limits $u_k \to u\in E^{s+1}$ and $\square_g u_k \to G\in E^s$. 
Then
\begin{align*}
\Vert \square_g u - G \Vert_{E^{s-1}}
&=
\Vert \square_g u - \square_g u_k + \square_g u_k - G\Vert_{E^{s-1}}\\
&\leq
\Vert \square_g u - \square_g u_k\Vert_{E^{s-1}} +\Vert \square_g u_k - G\Vert_{E^{s-1}}
\\
&\leq
C_0'( \Vert u-u_k\Vert_{E^{s+1}} + \Vert \square_g u_k - G\Vert_{E^s}) \to 0
\end{align*}
as $k\to \infty$. Therefore, $\square_g u = G $ a.e. implying $\square_g u = G \in E^s$, whence $X^{s+1}$ is a Banach space.

Next, let%
\[
A_1 = H^s(N),\quad
A_2 = X^{s+1},\quad
A_3 = H^{s+1}(\Sigma_0) \times H^s(\Sigma_0) \times E^{s}.
\]
and define $\Phi : A_1\times A_2 \to A_3$ by
\[
\Phi(f,u) = (u(0)-\varphi_0, \p_t u(0)-\varphi_1, \square_g u + q_1u +  q_2u^p - F-f)
\]
for a fixed $F\in E^s(N_T)$. The map is well-defined, since if $u\in X^{s+1}$, then $u(0)\in H^{s+1}(\Sigma_0)$, $\p_t u(0)\in H^s(\Sigma_0)$, and in the last term $\square_g u\in E^s$.
We next show that $\Phi$ is a $C^1$-map in the Fr\'echet sense. The other terms are linear, hence smooth, except the part $qu^p$.  For this term, we need to only show that $u\mapsto q_2u^p$ is $C^1$-map $X^{s+1}\to E^s$ in the Fr\'echet sense with the differential $pq_2u^{p-1}h$, $h\in E^{s+1}$, when $\Vert h\Vert_{E^{s+1}}\leq 1$. Now
\begin{align*}
\Vert q_2(u+h)^p - q_2u^p - pq_2u^{p-1}h \Vert_{E^{s}}
&\leq
C_0\Vert q_2 \Vert_{C^s(N)}
\left\Vert
\sum_{k=0}^p \binom{p}{k} u^{p-k}h^k - u^p - pu^{p-1}h 
\right\Vert_{E^{s}}\\
&=
C_0\Vert q_2 \Vert_{C^s(N)}
\left\Vert
\sum_{k=2}^p \binom{p}{k} u^{p-k}h^k
\right\Vert_{E^{s}}\\
&\leq
\Vert q_2 \Vert_{C^s(N)} O(\Vert h\Vert_{E^{s}}^2).
\end{align*}
This shows the differentiability. Continuity of the differential is proven similarly.
Therefore, $\Phi$ is $C^1$-map.
Now, it is clear that $\Phi(0,u_0)=(0,0,0)$. The first linearization of $\Phi$ in the second variable at $(0,u_0)$ is the bounded linear operator
\begin{align*}
&D_u \Phi\big|_{(0,u_0)} : X^{s+1}\to A_3\\
&D_u \Phi\big|_{(0,u_0)}(v) = (v(0),\p_t v(0),\square_gv + q_1v + pq_2u_0^{p-1} v ).
\end{align*}
We will next show that $D_u \Phi\big|_{(0,u_0)}$ is a homeomorphism. For this, consider the linear equation
\[
\begin{cases}
\square_gv + q_1v + pq_2u_0^{p-1} v = G & x\in N_T\\
v(0)=\phi_0,\quad \p_t v(0) = \phi_1.
\end{cases}
\]
The above linear Cauchy problem has a unique solution $v\in E^{s+1}$ depending
continuously on $(\phi_0,\phi_1,G)\in A_3$, see~\cite[Theorem~13 and Corollary~17]{Bar15}. Here $\square_g v = G-q_1v-pq_2u_0^{p-1}v \in E^s$, so that
$v\in X^{s+1}$. Thus, $D_u\Phi\big|_{(0,u_0)}$ is invertible. Moreover,
\begin{align*}
\Vert (D_u\Phi\big|_{(0,u_0)})^{-1}(\phi_0,\phi_1,G)\Vert_{X^{s+1}}
&= \Vert v \Vert_{E^{s+1}} + \Vert \square_g v \Vert_{E^s}
\\
& \leq
C \Big( \Vert G \Vert_{E^s} 
+ \Vert \phi_0 \Vert_{H^{s+1}(\Sigma_0)}
+ \Vert \phi_1 \Vert_{H^{s}(\Sigma_0)} 
\Big)\\
&\qquad
+ \Vert G-q_1v-pq_2u_0^{p-1}v \Vert_{E^s}
\\
&\leq C \Big( \Vert G \Vert_{E^s} 
+ \Vert \phi_0 \Vert_{H^{s+1}(\Sigma_0)}
+ \Vert \phi_1 \Vert_{H^{s}(\Sigma_0)} \Big)
\\
&\qquad
+ \Vert G \Vert_{E^s} + C' \Vert v \Vert_{E^{s+1}}
\\
& \leq
C'' \Big( \Vert G \Vert_{E^s} 
+ \Vert \phi_0 \Vert_{H^{s+1}(\Sigma_0)}
+ \Vert \phi_1 \Vert_{H^{s}(\Sigma_0)} \Big),
\end{align*}
where we liberally used the linear energy estimate for the norm of $v\in E^{s+1}$. Therefore, $D_u\Phi\big|_{(0,u_0)}: X^{s+1}\to A_3$ is a homeomorphism.

Now we are ready to apply the implicit function theorem on Banach spaces \cite[Theorem 10.6 and Remark 10.5]{RR06}. There exists $\eps>0$ and an open ball $B_\eps\vcentcolon=\{v\in E^s : \Vert v \Vert_{E^s}<\eps\}\subset E^s$, and a $C^1$-map $S : B_\eps \to A_2$, such that
\[
\Phi(f,S(f))=(0,0,0).
\]
Here $S(f)$ satisfies the nonlinear wave equation \eqref{eq: well-posedness for nonlinear eq with sources}.
Finally, since $S(0)=u_0$, there holds
\begin{align*}
\Vert S (f) - u_0\Vert_{E^{s+1}}
& \leq
\Vert S (f) - u_0 \Vert_{X^{s+1}} \\
&= 
\Vert S (f) - S (0)\Vert_{X^{s+1}} 
\\
&\leq C_0\Vert f \Vert_{E^s},
\end{align*}
where in the last step we used the fact that $S$ is  Lipschitz.

To demonstrate the uniqueness of the solution in $E^{s+1}$, suppose that there were two solutions $u_1,u_2\in E^{s+1}$ to \eqref{eq: well-posedness for nonlinear eq with sources}. Now the linear equation
\begin{equation}\label{eq: uniqueness for nonlinears}
\begin{cases}
\square_g v  +  \Big(q_1 +  q_2 P(u_1,u_2)\Big) v = 0,& \text{in } N_T\\
v(0)=\p_t v(0)=0,
\end{cases}
\end{equation}
where $P(a,b) = \sum_{i=0}^{p-1} a^ib^{p-1-i}$ and potential $q_1 + q_2 P(u_1,u_2)\in C(N_T)$ (by the embedding $E^s\subset C(N_T)$, when $s>n/2$, see~\cite[Appendix III, Definition 3.5]{CB08}), has a unique solution $v\in E^1$. Certainly the solution $v=0$. But also the function $v\vcentcolon = u_1-u_2$ satisfies \eqref{eq: uniqueness for nonlinears}. Therefore, $u_1=u_2$ in $E^1$, implying that $u_1=u_2$ in $E^{s+1}$, and we are done.
\end{proof}

\section{Gauge symmetry}\label{sec:gauge_symmetry_appendix}

Here we  show that the source-to-solution map $S$ has the
gauge invariance indicated by equation \eqref{gauge_intro}.
Let $q_1,q_2\in C^\infty(N)$, $F\in E^s(N)$, and suppose $\varphi \in C^2(N)$.  
If $u$ solves 
\begin{equation}\label{eq:nonlinear_original}
\square_g u + q_1 u + q_2 u^2 = F+f,
\end{equation}
then $\tilde u := u+\varphi$ satisfies
\begin{equation}\label{eq:nonlinear_tilde}
\square_g\tilde u + \tilde q_1\tilde u + \tilde q_2\tilde u^2 = \tilde F + f,
\end{equation}
provided that $\tilde q_1,\tilde q_2,\tilde F$ obey
\begin{align}\label{eq:nonlinear_gauge}
\begin{cases}
F = \tilde F - (\square_g\varphi + \tilde q_1\varphi + \tilde q_2\varphi^2),\\
q_1 = \tilde q_1 + 2\tilde q_2\varphi,\\
q_2 = \tilde q_2.
\end{cases}
\end{align}
If moreover $\varphi|_U=0$ and $\varphi(0,\cdot)=\partial_t\varphi(0,\cdot)=0$, then by uniqueness of solutions to the nonlinear problem \eqref{eq:nonlinear_tilde}, we have $\tilde u=u+\varphi$ as the (unique) solution from the same Cauchy data of $u$. In particular,
\[
u|_U=\tilde u|_U 
\quad\text{and}\quad 
F|_U=\tilde F|_U.
\]
Therefore the coefficients related by \eqref{eq:nonlinear_gauge} generate the same source-to-solution map, i.e.\ $S=\tilde S$.

\bigskip

\noindent{\footnotesize E-mail addresses:\\
Matti Lassas: {matti.lassas@helsinki.fi}\\
Tony Liimatainen: {tony.liimatainen@helsinki.fi}\\
Valter Pohjola: {valter.pohjola@gmail.com}\\
Teemu Tyni: {teemu.tyni@oulu.fi}
}

\bibliography{References} 

\begin{thebibliography}{LLPMT25}

\bibitem[BEE81]{BEE81}
J.~Beem, P.~Ehrlich, and K.~Easley.
\newblock {\em Global Lorentzian Geometry}, volume~67 of {\em Pure and Applied
  Mathematics}.
\newblock Dekker, New York, 1981.

\bibitem[BKLL22]{balehowsky2022inverse}
Tracey Balehowsky, Antti Kujanp{\"a}{\"a}, Matti Lassas, and Tony Liimatainen.
\newblock An inverse problem for the relativistic boltzmann equation.
\newblock {\em Communications in Mathematical Physics}, 396(3):983--1049, 2022.

\bibitem[BS05]{BS05time-splitting}
A.~Bernal and M~S\'anchez.
\newblock Smoothness of time functions and the metric splitting of globally
  hyperbolic spacetimes.
\newblock {\em Comm. in Math. Phys}, 257(1):43--50, 2005.

\bibitem[BW15]{Bar15}
C.~B\"ar and R.~Wafo.
\newblock Initial value problems for wave equations on {L}orentzian manifolds.
\newblock {\em Math. Phys. Anal. Geom.}, 18(1), 2015.

\bibitem[CB08]{CB08}
Y.~Choquet-Bruhat.
\newblock {\em General Relativity and the {E}instein Equations}.
\newblock Oxford University Press, 2008.

\bibitem[CLOP19]{CLOP19}
X.~Chen, M.~Lassas, L.~Oksanen, and G.~Paternain.
\newblock Detection of {H}ermitian connections in wave equations with cubic
  non-linearity.
\newblock {\em preprint}, 2019.

\bibitem[CLOP21]{chen2021detection}
Xi~Chen, Matti Lassas, Lauri Oksanen, and Gabriel~P Paternain.
\newblock Detection of hermitian connections in wave equations with cubic
  non-linearity.
\newblock {\em Journal of the European Mathematical Society}, 24(7):2191--2232,
  2021.

\bibitem[CLZ25]{chen2025stable}
Xi~Chen, Shuai Lu, and Ruochong Zhang.
\newblock Stable inversion of potential in nonlinear wave equations with cubic
  nonlinearity.
\newblock {\em arXiv preprint arXiv:2501.00748}, 2025.

\bibitem[dHUW19]{de2019nonlinear}
Maarten de~Hoop, Gunther Uhlmann, and Yiran Wang.
\newblock Nonlinear responses from the interaction of two progressing waves at
  an interface.
\newblock In {\em Annales de l'Institut Henri Poincar{\'e} C, Analyse Non
  Lin{\'e}aire}, volume~36, pages 347--363. Elsevier, 2019.

\bibitem[DHUW20]{de2020nonlinear}
Maarten De~Hoop, Gunther Uhlmann, and Yiran Wang.
\newblock Nonlinear interaction of waves in elastodynamics and an inverse
  problem.
\newblock {\em Mathematische Annalen}, 376:765--795, 2020.

\bibitem[DSFC17]{deka2017nonlinear}
Gitanjal Deka, Chi-Kuang Sun, Katsumasa Fujita, and Shi-Wei Chu.
\newblock Nonlinear plasmonic imaging techniques and their biological
  applications.
\newblock {\em Nanophotonics}, 6(1):31--49, 2017.

\bibitem[FLL23]{FLL23}
A.~Feizmohammadi, L.~Liimatainen, and Y-H. Lin.
\newblock An inverse problem for a semilinear elliptic equation on conformally
  transversally anisotropic manifolds.
\newblock {\em Annals of PDE}, 9(12), 2023.

\bibitem[FO20a]{FO20}
A.~Feizmohammadi and L.~Oksanen.
\newblock Recovery of zeroth order coefficients in non-linear wave equations.
\newblock {\em J. Inst. Math. Jussieu}, pages 1--27, 2020.

\bibitem[FO20b]{feizmohammadi2020inverse}
Ali Feizmohammadi and Lauri Oksanen.
\newblock An inverse problem for a semi-linear elliptic equation in riemannian
  geometries.
\newblock {\em Journal of Differential Equations}, 269(6):4683--4719, 2020.

\bibitem[H\"81]{Ho81}
L.~H\"ormander.
\newblock {\em The Analysis of Linear Partial Differential Operators {I}}.
\newblock Springer-Verlag, Berlin Heidelberg, 1981.

\bibitem[HUZ22a]{hintz2022dirichlet}
Peter Hintz, Gunther Uhlmann, and Jian Zhai.
\newblock The dirichlet-to-neumann map for a semilinear wave equation on
  lorentzian manifolds.
\newblock {\em Communications in Partial Differential Equations},
  47(12):2363--2400, 2022.

\bibitem[HUZ22b]{hintz2022inverse}
Peter Hintz, Gunther Uhlmann, and Jian Zhai.
\newblock An inverse boundary value problem for a semilinear wave equation on
  lorentzian manifolds.
\newblock {\em International Mathematics Research Notices},
  2022(17):13181--13211, 2022.

\bibitem[Isa93]{isakov1993uniqueness}
Victor Isakov.
\newblock On uniqueness in inverse problems for semilinear parabolic equations.
\newblock {\em Archive for Rational Mechanics and Analysis}, 124:1--12, 1993.

\bibitem[Kia21]{kian2021determination}
Yavar Kian.
\newblock On the determination of nonlinear terms appearing in semilinear
  hyperbolic equations.
\newblock {\em Journal of the London Mathematical Society}, 104(2):572--595,
  2021.

\bibitem[KKL01]{KKL01}
A.~Kachalov, Y.~Kurylev, and M.~Lassas.
\newblock {\em Inverse Boundary Spectral Problems}.
\newblock CRC Press, 2001.

\bibitem[KLL24]{kian2024determining}
Yavar Kian, Tony Liimatainen, and Yi-Hsuan Lin.
\newblock On determining and breaking the gauge class in inverse problems for
  reaction-diffusion equations.
\newblock In {\em Forum of Mathematics, Sigma}, volume~12, page e25. Cambridge
  University Press, 2024.

\bibitem[KLOU22]{kurylev2022inverse}
Yaroslav Kurylev, Matti Lassas, Lauri Oksanen, and Gunther Uhlmann.
\newblock Inverse problem for einstein-scalar field equations.
\newblock {\em Duke Mathematical Journal}, 171(16):3215--3282, 2022.

\bibitem[KLU18]{KLU18}
Y.~Kurylev, M.~Lassas, and G.~Uhlmann.
\newblock Inverse problems for {L}orentzian manifolds and non-linear hyperbolic
  equations.
\newblock {\em Invent. Math.}, 212(3):781--857, 2018.

\bibitem[KU20]{krupchyk2020remark}
Katya Krupchyk and Gunther Uhlmann.
\newblock A remark on partial data inverse problems for semilinear elliptic
  equations.
\newblock {\em Proceedings of the American Mathematical Society},
  148(2):681--685, 2020.

\bibitem[Lin04]{lindner2004microbubbles}
Jonathan~R Lindner.
\newblock Microbubbles in medical imaging: current applications and future
  directions.
\newblock {\em Nature reviews Drug discovery}, 3(6):527--533, 2004.

\bibitem[LL24]{LL24}
T.~Liimatainen and Y-H. Lin.
\newblock Uniqueness results for inverse source problems for semilinear
  elliptic equations.
\newblock {\em Inverse Problems}, 40:045030, 2024.

\bibitem[LLL24]{lin2024determining}
Yi-Hsuan Lin, Hongyu Liu, and Xu~Liu.
\newblock Determining a nonlinear hyperbolic system with unknown sources and
  nonlinearity.
\newblock {\em Journal of the London Mathematical Society}, 109(2):e12865,
  2024.

\bibitem[LLLS20]{lassas2020partial}
Matti Lassas, Tony Liimatainen, Yi-Hsuan Lin, and Mikko Salo.
\newblock Partial data inverse problems and simultaneous recovery of boundary
  and coefficients for semilinear elliptic equations.
\newblock {\em Revista Matem{\'a}tica Iberoamericana}, 37(4):1553--1580, 2020.

\bibitem[LLLS21]{LLLS21}
M.~Lassas, T.~Liimatainen, Y-H. Lin, and M.~Salo.
\newblock Inverse problems for elliptic equations with power type
  nonlinearities.
\newblock {\em Journal de math\'ematiques pures et appliqu\'ees}, 145:44--82,
  2021.

\bibitem[LLLZ22]{lin2022simultaneous}
Yi-Hsuan Lin, Hongyu Liu, Xu~Liu, and Shen Zhang.
\newblock Simultaneous recoveries for semilinear parabolic systems.
\newblock {\em Inverse Problems}, 38(11):115006, 2022.

\bibitem[LLPMT22]{lassas2022uniqueness}
Matti Lassas, Tony Liimatainen, Leyter Potenciano-Machado, and Teemu Tyni.
\newblock Uniqueness, reconstruction and stability for an inverse problem of a
  semi-linear wave equation.
\newblock {\em Journal of Differential Equations}, 337:395--435, 2022.

\bibitem[LLPMT24]{lassas2024inverse}
Matti Lassas, Tony Liimatainen, Leyter Potenciano-Machado, and Teemu Tyni.
\newblock An inverse problem for a semi-linear wave equation: A numerical
  study.
\newblock {\em Inverse Problems and Imaging}, 18(1):62--85, 2024.

\bibitem[LLPMT25]{lassas2025stability}
Matti Lassas, Tony Liimatainen, Leyter Potenciano-Machado, and Teemu Tyni.
\newblock Stability and lorentzian geometry for an inverse problem of a
  semilinear wave equation.
\newblock {\em Analysis \& PDE}, 18(5):1065--1118, 2025.

\bibitem[LN25]{liimatainen25_mean_curvature}
Tony Liimatainen and Janne Nurminen.
\newblock An inverse problem for the prescribed mean curvature.
\newblock {\em arXiv preprint arXiv:2509.22078}, 2025.

\bibitem[LSYZ21]{li2021acousto}
Wei Li, John~C Schotland, Yang Yang, and Yimin Zhong.
\newblock An acousto-electric inverse source problem.
\newblock {\em SIAM Journal on Imaging Sciences}, 14(4):1601--1616, 2021.

\bibitem[LUW18]{lassas2018inverse}
Matti Lassas, Gunther Uhlmann, and Yiran Wang.
\newblock Inverse problems for semilinear wave equations on lorentzian
  manifolds.
\newblock {\em Communications in Mathematical Physics}, 360:555--609, 2018.

\bibitem[LUY21]{lai2021reconstruction}
Ru-Yu Lai, Gunther Uhlmann, and Yang Yang.
\newblock Reconstruction of the collision kernel in the nonlinear boltzmann
  equation.
\newblock {\em SIAM Journal on Mathematical Analysis}, 53(1):1049--1069, 2021.

\bibitem[O'N83]{On83}
B.~O'Neill.
\newblock {\em Semi–Riemannian Geometry with Applications to Relativity},
  volume 103 of {\em Pure and Applied Mathematics}.
\newblock Academic Press, 1983.

\bibitem[QXZ25]{QXT25}
D.~Qiu, X.~Xu, and T.~Zhou.
\newblock Uniqueness result for semi-linear wave equations with sources.
\newblock {\em preprint}, 2025.

\bibitem[Ral82]{Ralston}
J.~Ralston.
\newblock Gaussian beams and the propagation of singularities.
\newblock In {\em Studies in Partial Differential Equations}, volume~23 of {\em
  MAA Stud. Math.}, pages 206--248. Math. Assoc. America, Washington, DC, 1982.

\bibitem[Rin09]{Ringstrom2009}
Hans Ringström.
\newblock {\em The {C}auchy Problem in General Relativity}, volume~6 of {\em
  ESI Lectures in Mathematics and Physics}.
\newblock European Mathematical Society (EMS), Z\"urich, 2009.

\bibitem[RR06]{RR06}
M.~Renardy and R.~C. Rogers.
\newblock {\em An Introduction to Partial Differential Equations}, volume~13 of
  {\em Applied Mathematical Sciences}.
\newblock Springer, 2006.

\bibitem[SBS22]{sa2022recovery}
Ant{\^o}nio S{\'a}~Barreto and Plamen Stefanov.
\newblock Recovery of a cubic non-linearity in the wave equation in the weakly
  non-linear regime.
\newblock {\em Communications in Mathematical Physics}, 392(1):25--53, 2022.

\bibitem[Tzo23]{tzou2023determining}
Leo Tzou.
\newblock Determining riemannian manifolds from nonlinear wave observations at
  a single point.
\newblock {\em Inverse Problems}, 39(11):115001, 2023.

\bibitem[UW20]{uhlmann2020determination}
Gunther Uhlmann and Yiran Wang.
\newblock Determination of space-time structures from gravitational
  perturbations.
\newblock {\em Communications on Pure and Applied Mathematics},
  73(6):1315--1367, 2020.

\bibitem[WZ19]{wang2019inverse}
Yiran Wang and Ting Zhou.
\newblock Inverse problems for quadratic derivative nonlinear wave equations.
\newblock {\em Communications in Partial differential equations},
  44(11):1140--1158, 2019.

\end{thebibliography}
        
\bibliographystyle{alpha}

\end{document}